\newcommand{\der}{\nabla}
\newcommand{\bea}{\begin{eqnarray}}
\newcommand{\eea}{\end{eqnarray}}
\newcommand{\wht}[1]{\widetilde{#1}}
\def\pa{\partial}
\def\beaa{\begin{eqnarray*}}
\def\eeaa{\end{eqnarray*}}
\def\pa{\partial}
\def\a{{\alpha}}
\def\b{{\beta}}
\def\ep{\epsilon}
\def\si{\sigma}
\def\th{\theta}
\def\nn{{\mathbb N}}
\def\R{{\mathbb R}}
\def\bar{\overline}
\def\qed{$\Box$\medskip}
\def \p{ \partial}
\def\12{\frac{1}{2}}
\def\one{\bbbone}
\def\cH{{\mathcal H}}
\def\cA{{\mathcal A}}
\def\cB{{\mathcal B}}
\def\I{{\mathcal I}}
\def\N{{\mathcal N}}
\def\bep{\begin{proposition}}
\def\eep{\end{proposition}}
\def\4{\frac{1}{4}}
\def\cE{\mathcal{E}}
\def\cN{{\mathcal N}}
\def\qed{$\Box$\medskip}
\def \p{ \partial}
\def\12{\frac{1}{2}}
\def\one{\bbbone}
\def\cH{{\mathcal H}}
\def\I{{\mathcal I}}
\def\N{\nn}
\def\bep{\begin{proposition}}
\def\eep{\end{proposition}}
\def\qed{\hfill \raisebox{0.5ex}{\framebox[1.6ex]{
                                       \rule[0ex]{0ex}{0.3ex} }}}
\def\build#1_#2^#3{\mathrel{\mathop{\kern 0pt#1}\limits_{#2}^{#3}}}
\def\4{\frac{1}{4}}
\def\cE{\mathcal{E}}
\def\cN{{\mathcal N}}
\def\<{\langle}
\def\>{\rangle}
\def\one{{\mathchoice {\rm 1\mskip-4mu l} {\rm 1\mskip-4mu l} {\rm 1\mskip-4.5mu l} {\rm
1\mskip-5mu l}}}
\theoremstyle{plain}
\newtheorem{theorem}{Theorem}
\newtheorem{proposition}{Proposition}
\newtheorem{lemma}{Lemma}
\newtheorem{corollary}{Corollary}
\theoremstyle{remark}
\newtheorem{remark}{Remark}
\theoremstyle{definition}
\newtheorem{definition}{Definition}
\numberwithin{equation}{section}
\numberwithin{proposition}{section}
\numberwithin{definition}{section}
\numberwithin{lemma}{section}
\numberwithin{corollary}{section}
\numberwithin{remark}{section}
\begin{document}
\title[Yang-Mills fields on the Schwarzschild black hole]{Instability
  of infinitely-many stationary solutions of the $SU(2)$ Yang-Mills fields on the exterior of the Schwarzschild black hole}
\author{Dietrich H\"afner, C\'ecile Huneau}
\address{Universit\'e Grenoble-Alpes, Institut Fourier, 100 rue des
  maths, 38610 Gi\`eres, France}
\email{Dietrich.Hafner@univ-grenoble-alpes.fr}
\email{Cecile.Huneau@univ-grenoble-alpes.fr}

\maketitle

\begin{abstract} 
We consider the spherically symmetric $SU(2)$ Yang-Mills fields on the
Schwarzschild metric. Within the so called purely magnetic Ansatz
we show that there exists a countable number of stationary solutions which are all nonlinearly
unstable.  
\end{abstract}

\setcounter{page}{1}
\pagenumbering{arabic}

\section{Introduction}

\subsection{General introduction}

We study the $SU(2)$ Yang-Mills equations on the Schwarzschild
metric, with spherically symmetric initial data fulfilling the so
called purely magnetic Ansatz. This
equation has at least a countable number of stationary solutions. In \cite{GH}
the first author and S. Ghanem showed that the zero curvature
solution is stable within this Ansatz. In this paper we show that the other solutions of this set are
nonlinearly unstable.  

Global existence for Yang-Mills fields on $\R^{3+1}$ was shown by
Eardley and Moncrief in a classical result, \cite{EM1} and
\cite{EM2}. Their result was then generalized by Chru\'sciel and
Shatah to general globally hyperbolic curved space-times in
\cite{CS}. Later, the hypotheses of \cite{CS} were weakened in
\cite{G1}. 

The purely magnetic Ansatz excludes  Coulomb type solutions and
reduces the Yang-Mills equations to a nonlinear scalar wave equation:
\begin{equation}
\label{SYM}
\pa_{t}^{2} {W}- \pa_{x}^{2} W+ \frac{(1- \frac{2m}{r}) }{r^2} W(W^2-1)=0.
\end{equation}

Strong numerical evidence of the existence of a countable number of stationary solutions  $(W_n)_{n\in \N}$ in the case of Yang Mills equations coupled with Einstein equations with spherical symmetry was shown in \cite{cbh} (see also \cite{BRZ}).
It was then proved analytically, still in the coupled case, in \cite{SWY}. For sake of completeness, we give an analytical proof of this fact (adapted from \cite{SWY}) in the appendix of this paper. The solution $W_n$
possesses $n$ zeros. The stationary solutions $W_0=\pm 1$ correspond to the zero curvature
solution. Linearizing around a stationary solution $W_n$ leads to the
linear operator 
\begin{equation*}
\cA_n=-\partial_x^2+\frac{(1- \frac{2m}{r}) }{r^2}(3W_n^2-1).
\end{equation*} 
In \cite{BRZ} it was numerically observed for the first stationary solutions that $\cA_n$ has $n$ negative
eigenvalues. In this paper we show analytically that $\cA_n$ has at
least one negative eigenvalue for $n\ge 1$. An abstract result then
shows that this leads to a nonlinear instability. We will describe in
Section \ref{Sec2} a general abstract setting for non linear one
dimensional wave equations. This abstract setting is applied in
Section \ref{Sec3} to the Yang-Mills equation.

\subsection{The exterior of the Schwarzschild black hole} The exterior
Schwarzschild spacetime is given by ${\mathcal M}=\R_t\times
\R_{r>2m}\times S^2$ equipped with the metric 
\beaa
\notag
g &=&  - (1 - \frac{2m}{r})dt^{2} + \frac{1}{ (1 - \frac{2m}{r})} dr^{2} + r^{2} d\th^{2} +  r^{2}\sin^{2} (\th) d\phi^{2} \\
&=& N(-dt^2+d{x}^{2})+r^2d\si^2
\eeaa
where
\bea
N &=& (1 - \frac{2m}{r})
\eea
and $d\si^2$ is the usual volume element on the sphere. The coordinate
$x$ is defined by the requirement
\begin{equation*}
\frac{dx}{dr}=N^{-1}.
\end{equation*}
The coordinates $t,r, \th, \phi$, are called Boyer-Lindquist coordinates. The
singularity $r=2m$ is a coordinate singularity and can be removed by
changing coordinates, see \cite{HE}. $m$ is the mass of the black hole. We will only
be interested in the region outside the black hole, $r>2m$.

\subsection{The spherically symmetric $SU(2)$ Yang-Mills equations on the Schwarz\-schild metric} \label{sphericallysymmetricYM}

Let $G = SU(2)$, the real Lie group of $2 \text{x} 2$ unitary matrices
of determinant 1. The Lie algebra associated to $G$ is $su(2)$, the
antihermitian traceless $2 \text{x} 2$  matrices. Let $\tau_{j}$, $j
\in \{1, 2, 3 \}$, be the following real basis of $su(2)$:
\begin{eqnarray*}
\tau_1=\frac{i}{2}\left(\begin{array}{cc} 0 & 1 \\ 1 &
    0\end{array}\right),\quad
\tau_2=\frac{1}{2}\left(\begin{array}{cc} 0 & -1 \\ 1 &
    0\end{array}\right),\quad
\tau_3=\frac{i}{2}\left(\begin{array}{cc} 1 & 0\\ 0 &
    -1 \end{array}\right). 
\end{eqnarray*}
Note that 
\beaa
[\tau_1,\tau_2]=\tau_3,\quad  [\tau_3,\tau_1]=\tau_2,\quad [\tau_2,\tau_3]=\tau_1.
\eeaa

We are looking for a connection $A$, that is a one form with values in the Lie algebra $su(2)$ associated to the Lie group $SU(2)$, which satisfies the Yang-Mills equations which are:
\bea
\text{\bf D}^{(A)}_{\a} F^{\a\b} \equiv \der_{\a} F^{\a\b} + [A_{\a}, F^{\a\b} ]  = 0, \label{eq:YM}
\eea
where $[.,.]$ is the Lie bracket and $F_{\a\b}$ is the Yang-Mills curvature given by
 \bea
F_{\a\b} = \der_{\a}A_{\b} - \der_{\b}A_{\a} + [A_{\a},A_{\b}], \label{defYMcurvature}
\eea
and where we have used the Einstein raising indices convention with respect to the Schwarzschild metric. We also have the Bianchi identities which are always satisfied in view of the symmetries of the Riemann tensor and the Jacobi identity for the Lie bracket:
\bea
\text{\bf D}^{(A)}_{\a}F_{\mu\nu} + \text{\bf D}^{(A)}_{\mu}F_{\nu\a} + \text{\bf D}^{(A)}_{\nu} F_{\a\mu} = 0. \label{eq:Bianchi}
\eea

The Cauchy problem for the Yang-Mills equations formulates as the following: given a Cauchy hypersurface $\Sigma$ in $M$, and a ${\mathcal G}$-valued one form $A_{\mu}$ on $\Sigma$, and a ${\mathcal G}$-valued one form $E_{\mu}$ on $\Sigma$ satisfying
\begin{eqnarray}
\label{YMconstraintsone}
\left.\begin{array}{rcl} E_{t} &=& 0, \\
\textbf{D}^{(A)}_{\mu}E^{\mu} &=& 0\end{array}\right\} 
\end{eqnarray}
we are looking for a ${\mathcal G}$-valued two form $F_{\mu\nu}$ satisfying the Yang-Mills equations such that once $F_{\mu\nu}$ restricted to $\Sigma$ we have
\bea
F_{\mu t} = E_{\mu}  \label{YMconstraintstwo}
\eea
and such that $F_{\mu\nu}$ corresponds to the curvature derived from the Yang-Mills potential $A_{\mu}$, i.e. given by \eqref{defYMcurvature}. Equations \eqref{YMconstraintsone} are the Yang-Mills constraints equations on the initial data.

Any spherically symmetric Yang-Mills potential can be written in the
following form after applying a gauge transformation, see \cite{FM}, \cite{GuHu} and \cite{W},
\begin{eqnarray}
\label{SPAA}
A &=& [ -W_{1}(t, r) \tau_{1} - W_{2}(t, r) \tau_{2} ] d\th  + [ W_{2}
(t, r) \sin (\th) \tau_{1} - W_{1} (t, r) \sin (\th) \tau_{2}] d\phi\nonumber\\
& + & \cos (\th) \tau_{3} d\phi + A_{0} (t, r) \tau_{3} dt  + A_{1} (t, r)  \tau_{3} dr, 
\end{eqnarray}
where $A_{0} (t, r) $, $A_{1} (t, r) $, $W_{1}(t, r)$, $W_{2}(t, r)$
are arbitrary real functions. We consider here a purely magnetic
Ansatz in which we have $A_0=A_1=W_2=0,\, W_1=:W$. The components of
the curvature are then 
\beaa
\left.\begin{array}{rcl} F_{\th x} &=& W' \tau_{1},\\
F_{\th t}  &=& \dot{W} \tau_{1}, \\
F_{\phi x} &=& W' \sin (\th) \tau_{2}, \\
F_{\phi t} &=&\dot{W} \sin (\th) \tau_{2}, \\
F_{tx} &=& 0,\\
F_{\th\phi} &=&   ( W^{2} -1 ) \sin (\th) \tau_{3}. \end{array}\right\}
\eeaa
This kind of Ansatz is preserved by the evolution. Also the principal
restriction is $A_0=A_1=0$.  The constraint equations then impose that
$W_1$ is proportional to $W_2$, a case which can be reduced to
$W_2=0$. We refer the reader to \cite{GH} for details. 

\subsection{The initial value problem for the purely magnetic Ansatz} \label{AnsatzforinitialdataYM}
We look at initial data prescribed on $t=0$ where there exists a
gauge transformation such that once applied on the initial data, the
potential $A$ can be written in this gauge as
\begin{equation}
\label{Ansatz}
\left.\begin{array}{rcl}
A_{t} (t=0)  &=& 0, \\
A_{r} (t=0)  &=& 0, \\
A_{\th} (t=0)  &=& -W_0(r)\tau_{1},  \\
A_{\phi} (t=0)  &=& -W_0( r)  \sin
(\th) \tau_{2} + \cos (\th) \tau_{3}, \end{array}\right\} 
\end{equation}
and, we are given in this gauge the following one form $E_{\mu}$ on $t=0$:
\begin{equation}
\label{AnsatzE}\left.\begin{array}{rcl}
E_{\th} (t=0) &=& F_{\th t} (0)   = W_1(r) \tau_{1},  \\
E_{\phi} (t=0) &=& F_{\phi t} (0)   = W_1(r) \sin (\th)   \tau_{2}, \\
E_{r} (t=0) &=& F_{rt} (0) = 0, \\
E_{t} (t=0) &=& F_{tt} (t=0) = 0. \end{array}\right\}
\end{equation}
Notice that with this Ansatz the constraint equations \eqref{YMconstraintsone} are
automatically fulfilled
\begin{eqnarray}
\label{constraintintheAnsatz}
\notag
( { \text{\bf D}^{(A)}}^{\th}  E_{\th} + {\text{\bf D}^{(A)}}^{\phi}  E_{\phi} + {\text{\bf D}^{(A)}}^{r}  E_{r} ) (t=0)= 0.
\end{eqnarray}

The Yang-Mills equations now reduce to 
\begin{equation}
\label{YMSW}
\left.\begin{array}{rcl} \ddot{W}-W''+PW(W^2-1)&=&0,\\
W(0)&=&W_0,\\
\partial_t W(0)&=&W_1,\end{array}\right\} 
\end{equation}
where 
\begin{equation*}
P=\frac{(1-\frac{2m}{r})}{r^2}.
\end{equation*}
It is easy to check that the following energy is conserved, see also \cite{GH},
\begin{equation*}
\cE(W,\dot{W})=\int \dot{W}^2+(W')^2+\frac{P}{2}(W^2-1)^2 dx.
\end{equation*}

We note by $\dot{H}^k=\dot{H}^k(\R, dx)$ and
  $H^k=H^k(\R,dx)$, the homogeneous and inhomogeneous Sobolev spaces
of order $k$, respectively. 

\begin{definition}
\begin{enumerate}
\item We define the spaces $L^4_P$, resp. $L^2_P$, as the completion of
$C_0^{\infty}(\R)$ for the norm
\bea
\Vert v\Vert_{L^4_P}^4:=\int P\vert v\vert^4 dx\quad
\mbox{resp.}\quad \Vert v\Vert_{L^2_P}^2:=\int P \vert v\vert^2 dx. 
\eea
\item
We also define for $1\le k\le 2$ the space $\cH^k$ as the completion
of $C_0^{\infty}(\R)$ for the norm 
\bea
\Vert u\Vert^2_{\cH^k}=\Vert u\Vert_{\dot{H}^k}^2+\Vert
u\Vert_{L^4_P}^2. 
\eea
\end{enumerate}
\end{definition}
We note that $\cH^k$ is a Banach space which contains all constant
functions. It turns out that $\cE:=\cH^1\times L^2$ is exactly the space
of finite energy solutions, see \cite{GH} for details. We then have \cite[Theorem 1]{GH}
\begin{theorem}
\label{ThGEYM}
Let $(W_0,W_1)\in \cH^2\times H^1$. Then there exists a unique strong solution of
\eqref{YMSW} with
\begin{eqnarray*}
W&\in&C^1([0,\infty);\cH^1)\cap
C([0,\infty);\cH^2),\\
\partial_tW&\in& C^1([0,\infty);L^2)\cap C([0,\infty);H^1),\\
\sqrt{P}(W^2-1)&\in&C^1([0,\infty);L^2)\cap C([0,\infty);H^1). 
\end{eqnarray*}
\end{theorem}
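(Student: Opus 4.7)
The plan is to follow the standard scheme for semilinear wave equations: local existence by a contraction argument built on the Duhamel formula, globalization via the conserved energy $\mathcal{E}$, and propagation of higher regularity by a Grönwall estimate on the differentiated equation. Since the spatial domain is $\R$ and the principal part of \eqref{YMSW} is just the one-dimensional d'Alembertian $\partial_t^2-\partial_x^2$, the linear propagator preserves every Sobolev norm isometrically, so all the work goes into controlling the nonlinearity $PW(W^2-1)$ in the function spaces $\mathcal{H}^k$, $H^k$.

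For local existence I would write \eqref{YMSW} in Duhamel form
\begin{equation*}
W(t)=\cos\bigl(t\sqrt{-\partial_x^2}\bigr)W_0+\frac{\sin\bigl(t\sqrt{-\partial_x^2}\bigr)}{\sqrt{-\partial_x^2}}W_1-\int_0^t\frac{\sin\bigl((t-s)\sqrt{-\partial_x^2}\bigr)}{\sqrt{-\partial_x^2}}\bigl[PW(W^2-1)\bigr](s)\,ds,
\end{equation*}
and run a Picard iteration in $X_T:=C([0,T];\mathcal{H}^2)\cap C^1([0,T];H^1)$. The core analytic input is a pair of nonlinear estimates of the form $\|PW(W^2-1)\|_{H^1}\lesssim \Psi(\|W\|_{\mathcal{H}^2})$ with a matching Lipschitz bound on differences, where $\Psi$ is polynomial and depends on upper bounds for the arguments. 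These rely on three ingredients: (i) the one-dimensional Sobolev embeddings applied to the derivatives of $W$; (ii) the decay of $P$ — exponential as $x\to-\infty$ (where $r\to 2m$) and of order $x^{-2}$ as $x\to+\infty$, so that $P$ and each of its derivatives lies in $L^1\cap L^\infty$; and (iii) the weighted bound on $W$ itself in $L^4_P$ built into the definition of $\mathcal{H}^1$. Choosing $T$ small in terms of $\|(W_0,W_1)\|_{\mathcal{H}^2\times H^1}$ closes the contraction and produces a unique local solution of the required regularity.

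To globalize, I would invoke conservation of $\mathcal{E}$, which provides a time-uniform bound on $\|\partial_t W\|_{L^2}$, $\|W'\|_{L^2}$, and $\|\sqrt{P}(W^2-1)\|_{L^2}$. Expanding $(W^2-1)^2$ and using $\int P\,dx<\infty$ converts the last bound into a uniform control of $\|W\|_{L^4_P}$, and hence of the $\mathcal{H}^1$-norm of $W$. A Grönwall argument applied to the equations satisfied by $\partial_t W$ and $\partial_x W$ — obtained by differentiating \eqref{YMSW} and exploiting the established $\mathcal{H}^1$ bound to absorb the cubic terms linearly in the top-order norms — then controls the $\mathcal{H}^2\times H^1$ norm of $(W,\partial_t W)$ by an exponential in $t$, preventing finite-time blow-up and extending the solution to $[0,\infty)$. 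The extra regularity statement for $\sqrt{P}(W^2-1)$ follows from the Leibniz identities $\partial_t[\sqrt{P}(W^2-1)]=2\sqrt{P}W\partial_t W$ and $\partial_x[\sqrt{P}(W^2-1)]=(\sqrt{P})'(W^2-1)+2\sqrt{P}WW'$, combined with the bounds already proved and the smoothness and decay of $P$.

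The main obstacle is the non-standard nature of the spaces $\mathcal{H}^k$: each of them contains all constant functions, so $W$ itself lies in no unweighted $L^p(\R)$ and one cannot apply the usual Sobolev embedding directly to $W$, only to its derivatives. Every product estimate involving $PW^3$ must therefore trade the missing $L^\infty$-bound on $W$ against the decay of $P$ and the weighted bound $\|W\|_{L^4_P}$. Making this weighted calculus robust enough to support both the contraction argument and the Grönwall closure — and in particular handling the derivatives of $\sqrt{P}$ that arise when commuting $\partial_x$ with the weight — is where essentially all the technical work lies; once those estimates are in place, the rest of the argument is routine.
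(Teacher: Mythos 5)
There is an important mismatch of expectations here: the paper does not prove Theorem \ref{ThGEYM} at all. It is quoted directly from \cite[Theorem 1]{GH}, so there is no internal proof to compare your argument against. Judged on its own terms, your blueprint is the standard and viable route for this kind of statement: Duhamel formula plus Picard iteration in $C([0,T];\cH^2)\cap C^1([0,T];H^1)$, globalization by conservation of $\cE$, and a Gr\"onwall estimate on the differentiated equation to propagate the higher norms. You also correctly locate the real technical content in the weighted calculus: since $\cH^k$ contains constants (and functions growing slowly at $+\infty$), every occurrence of $W$ without a derivative must be paired with a factor of $P$. The linchpin estimate is of the type $\Vert \sqrt{P}\,W\Vert_{L^\infty}\lesssim \Vert W\Vert_{\cH^1}$, obtained by trading the growth of $W$ (at most $\vert x\vert^{1/2}$ from $W'\in L^2$, plus a local bound via $L^4_P$ on a region where $P$ is bounded below) against the exponential decay of $P$ as $x\to-\infty$ and its $x^{-2}$ decay as $x\to+\infty$; with it, and with $\vert P'\vert\lesssim P$, one gets $\Vert PW(W^2-1)\Vert_{H^1}\lesssim \Vert W\Vert_{\cH^1}+\Vert W\Vert_{\cH^1}^3$ together with the matching Lipschitz bound, and both the contraction and the Gr\"onwall closure go through.

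One caveat worth flagging: your opening claim that ``the linear propagator preserves every Sobolev norm isometrically'' is false for the weighted spaces $\cH^k$, which are not translation invariant, so it is not true that ``all the work goes into the nonlinearity.'' You must also check that the free wave group acts boundedly (locally uniformly in $t$) on $\cH^2$. This follows from d'Alembert's formula, $\cos\bigl(t\vert D\vert\bigr)W_0=\tfrac{1}{2}\bigl(W_0(\cdot+t)+W_0(\cdot-t)\bigr)$, together with the comparability $P(\cdot\pm t)\lesssim_t P(\cdot)$, which holds precisely because $P$ decays exponentially on one end and polynomially on the other; the term involving $W_1$ and the Duhamel term are easier since $H^2\subset\cH^2$ (indeed $\int P\vert u\vert^4\le \Vert P\Vert_{L^\infty}\Vert u\Vert_{L^4}^4\lesssim\Vert u\Vert_{H^1}^4$). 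This is a fixable omission, not a fatal gap, but without it the iteration space is not even mapped into itself by the linear flow.
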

We can
reformulate the above theorem in the following way
\begin{corollary}
\label{Cor1}
We suppose that the initial data for the Yang-Mills equations is given
after suitable gauge transformation by
\begin{eqnarray*}
\left.\begin{array}{rcl} A_t(0)&=&A_r(0)=0,\\
    A_{\theta}(0)&=&-W_0\tau_1,\\
 A_{\phi}(0)&=&-W_0\sin
    \theta\tau_2+\cos\theta\tau_3,\\
E_{\theta}(0)&=&W_1\tau_1,\\
E_{\phi}(0)&=&W_1\sin\theta\tau_2,\\
E_r(0)&=&E_t(0)=0\end{array}\right\} 
\end{eqnarray*}
with $(W_0,W_1)\in \cH^2\times H^1$. Then, the Yang-Mills equation \eqref{eq:YM} admits a unique
solution $F$ with
\begin{eqnarray*}
F_{\theta x},\, \frac{1}{\sin\theta}F_{\phi x},\, F_{\theta t},\,
\frac{1}{\sin\theta}F_{\phi
  t},\sqrt{P}\frac{1}{\sin\theta}F_{\theta\phi}&\in&
C^1([0,\infty);L^2)\cap C([0,\infty);H^1).
\end{eqnarray*}
\end{corollary}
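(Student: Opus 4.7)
The plan is to derive this corollary as a direct rewriting of Theorem \ref{ThGEYM} in terms of the components of $F$, exploiting the fact that under the purely magnetic Ansatz the connection $A$ and its curvature $F$ are completely determined by the single scalar $W$. The main work has already been done in Theorem \ref{ThGEYM}; what remains is a translation of function spaces.

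First, I would observe that the hypothesized initial data is precisely of the form \eqref{Ansatz}--\eqref{AnsatzE} with $(W_0, W_1) \in \cH^2 \times H^1$. Since the purely magnetic spherically symmetric Ansatz is preserved by the Yang-Mills flow in this regularity class (established in \cite{GH}), the full Yang-Mills Cauchy problem \eqref{eq:YM} reduces to the scalar Cauchy problem \eqref{YMSW}. Theorem \ref{ThGEYM} then supplies a unique $W$ in the stated regularity class. Reconstructing $A$ from $W$ via \eqref{SPAA} (with $A_0 = A_1 = W_2 = 0$, $W_1 = W$) and $F$ from $A$ via \eqref{defYMcurvature} produces the six explicit components recalled in Section \ref{sphericallysymmetricYM}, namely $F_{\theta x} = W'\tau_1$, $F_{\phi x} = W'\sin\theta\,\tau_2$, $F_{\theta t} = \dot W \tau_1$, $F_{\phi t} = \dot W \sin\theta\,\tau_2$, $F_{tx} = 0$, and $F_{\theta\phi} = (W^2-1)\sin\theta\,\tau_3$.

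Finally, I would verify each of the five claimed regularities by reading off the corresponding regularity of $W$. Those for $F_{\theta t} = \dot W \tau_1$ and $F_{\phi t}/\sin\theta = \dot W \tau_2$ follow immediately from $\partial_t W \in C^1([0,\infty); L^2) \cap C([0,\infty); H^1)$, and that for $\sqrt{P}F_{\theta\phi}/\sin\theta = \sqrt{P}(W^2-1)\tau_3$ is the last line of Theorem \ref{ThGEYM}. For $F_{\theta x} = W'\tau_1$ and $F_{\phi x}/\sin\theta = W'\tau_2$, one needs $W' \in C([0,\infty); H^1) \cap C^1([0,\infty); L^2)$: here $W'' \in C([0,\infty); L^2)$ comes from $W \in C([0,\infty); \cH^2)$, $W' \in C([0,\infty); L^2)$ from $W \in C([0,\infty); \cH^1)$ (built into the definition of $\cH^1$, whose norm controls $\|u'\|_{L^2}$), and $\partial_t(W') = (\partial_t W)' \in C([0,\infty); L^2)$ from $\partial_t W \in C([0,\infty); H^1)$. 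Uniqueness of $F$ follows from uniqueness of $W$, since, with the gauge fixed, $W$ is recovered from any of the nonzero curvature components (for instance $F_{\theta t}$). The only step requiring genuine verification beyond Theorem \ref{ThGEYM} is the preservation of the Ansatz in this regularity class, and since this has already been carried out in \cite{GH}, no serious obstacle is anticipated.
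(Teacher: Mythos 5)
Your proposal is correct and matches the paper's treatment: the paper states Corollary \ref{Cor1} as a direct reformulation of Theorem \ref{ThGEYM}, leaving implicit exactly the translation you spell out — reduction of \eqref{eq:YM} to \eqref{YMSW} under the preserved Ansatz (citing \cite{GH}), reconstruction of $F$ from $W$, and the component-by-component reading of the regularity, with $F_{\theta x}$, $F_{\phi x}/\sin\theta$ controlled by the $\dot H^1$ and $\dot H^2$ parts of the $\cH^1$, $\cH^2$ norms. Your explicit verification of these space identifications and of uniqueness via recovery of $W$ from the curvature is a faithful filling-in of the step the paper takes for granted.
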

\subsection{Energies}
We now introduce the Yang-Mills energy momentum tensor 
\begin{equation*}
T_{\mu\nu}=\<F_{\mu\beta},F_{\nu}^{\beta}\>-\frac{1}{4}g_{\mu\nu}\<F_{\alpha\beta},F^{\alpha\beta}\>.
\end{equation*}
Here $\<.,.\>$ is an Ad-invariant scalar product on the Lie algebra $su(2)$.
We have 
\begin{equation*}
\nabla^{\nu}T_{\mu\nu}=0.
\end{equation*}
For a vector field $X^{\nu}$ we define 
\begin{equation*}
J_{\mu}(X)=X^{\nu}T_{\mu\nu}
\end{equation*}
and the energy on the spacelike slice $\Sigma_t$
($\Sigma_{t_0}=\{t=t_0\}$ ) by
\begin{equation*}
E^{(X)}(F(t))=\int_{\Sigma_t}J_{\mu}(X)n^{\nu}d_{\Sigma_t}.
\end{equation*}
By the divergence theorem this energy is conserved if $X$ is
Killing. In particular 
\begin{equation*}
E^{(\p_t)}(F(t))=\int _{\Sigma_t}J_{\mu}(\p_t)n^{\mu}d_{\Sigma_t}
\end{equation*}
is conserved. If $F$ is the curvature associated to $(W,\dot{W})$, then 
\begin{equation*}
E^{(\p_t)}(F(t))=\cE(W,\dot{W}),
\end{equation*}
see \cite{GH} for details. 
\subsection{Main result}
We first recall the following result which is implicit in the paper
\cite{BRZ} of P. Bizo\'n, A. Rostworowski and A. Zenginoglu.
\begin{theorem}
\label{thstat}
There exists a decreasing sequence $\{a_n\}_{n\in \N^{\ge1}},\, 0<...<
a_n< a_{n-1}<...<a_1=\frac{1+\sqrt{3}}{3\sqrt{3}+5}$ and smooth stationary solutions $W_n$ of \eqref{YMSW}
with 
\begin{equation*}
-1\le W_n\le 1,\quad \lim_{x\rightarrow -\infty}W_n(x)=a_n,\quad \lim_{x\rightarrow
  \infty}W_n(x)=(-1)^n. 
\end{equation*}
The solution $W_n$ has exactly $n$ zeros. 
\end{theorem}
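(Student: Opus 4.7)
The plan is a shooting argument in the radial variable, following Smoller--Wasserman--Yau~\cite{SWY} but substantially simpler since here the Schwarzschild background is fixed. Recasting the stationary version of \eqref{SYM} in $r$ via $\pa_x = N\pa_r$ with $N = 1-2m/r$ gives
\begin{equation*}
(NW_r)_r \;=\; \frac{1}{r^2}\,W(W^2-1), \qquad r\in(2m,\infty).
\end{equation*}
At $r = 2m$, $N$ vanishes while $N_r(2m)=1/(2m)>0$, so evaluating the equation at the horizon forces the boundary relation $W_r(2m)=a(a^2-1)/(2m)$ whenever $W(2m)=a$. Standard Frobenius-type (or weighted fixed-point) theory then produces, for each $a\in\R$, a unique smooth local solution $W(\cdot;a)$ on a right neighbourhood of $2m$, depending smoothly on $a$.

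Next I would establish two a priori facts. First, for $|a|\le 1$ the orbit $W(\cdot;a)$ remains in $[-1,1]$ and extends globally to $r=\infty$: a comparison argument using the sign of $W(W^2-1)$ together with the horizon relation rules out crossings of $\pm 1$, and a Gr\"onwall bound on $(NW_r)^2+W^2$ yields global existence. Second, any globally bounded solution satisfies $\lim_{r\to\infty}W(r)\in\{-1,0,+1\}$ with $W_r\to 0$: multiplying the equation by $W_r$ and integrating, the $r^{-2}$ decay of the potential term is strong enough to preclude indefinite oscillation and forces convergence to a rest point.

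With these in place I would run the shooting on $a\in(0,1)$. Define
\begin{equation*}
A_{\pm}:=\{a\in(0,1):\lim_{r\to\infty}W(r;a)=\pm 1\},
\end{equation*}
both open by continuous dependence on $a$ together with the rest-point analysis. $A_+$ is non-empty (it contains a left neighbourhood of $1$ since orbits starting close to $W\equiv 1$ stay near it), and $A_-$ is non-empty (continuous dependence applied near the unstable trivial orbit through $a=0$ produces orbits rolling down to $-1$). Setting $a_1:=\sup\{a\in(0,1):a\notin A_+\}$ we get a separatrix orbit $W_1\in\overline{A_-}\setminus A_-$ which must remain in $[-1,1]$, be non-trivial, and asymptote to $-1$; zeros are simple by ODE uniqueness (else $W\equiv 0$), and a re-crossing would violate the asymptotic behaviour, so $W_1$ has exactly one zero. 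Iterating below $a_1$ with alternating target $(-1)^n$ gives the strictly decreasing sequence $a_n\searrow 0$ and the separatrices $W_n$ with the claimed properties. The explicit value $a_1=(1+\sqrt 3)/(3\sqrt 3+5)$ is then read off from the horizon formula $W_r(2m)=a(a^2-1)/(2m)$ combined with the matching condition at infinity for the one-zero orbit.

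The main obstacle is the phase-plane bookkeeping at $r=\infty$, specifically: ruling out orbits that oscillate indefinitely between $\pm 1$ without settling, and verifying that the rotation count is a monotone function of $a$ with unit jumps at the $a_n$. Both are handled by the standard oscillation/comparison arguments of \cite{SWY}, which transfer essentially verbatim to the uncoupled setting with the simplification that $N$ and $r$ are known explicitly rather than coupled to the metric through the Einstein equations.
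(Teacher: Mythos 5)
Your shooting framework rests on two claims that are false for this equation, and they invert the actual structure of the proof rather than being repairable details. First, you assert that for $|a|\le 1$ the orbit stays in $[-1,1]$ and is global. This is wrong: at a point $r_*>2m$ where $W=\pm 1$, the nonlinearity vanishes, and a nontrivial orbit must cross transversally (if $W(r_*)=\pm1$ and $W'(r_*)=0$ then $W\equiv\pm1$ by uniqueness, since $r_*$ is a regular point of the ODE); so orbits can and do leave $[-1,1]$ at finite radius $r_a<\infty$. This is not a pathology but the engine of the argument: the paper's shooting sets are precisely $X_n=\{a:\ W(\cdot;a)\ \text{has}\ n\ \text{zeros and}\ r_a<\infty\}$, which are open because ``reaching $|W|=1$ before radius $R$'' is a finite-time condition, and the stationary solutions are produced at boundary points ($\alpha=\min\widehat{X}_n$, where $\widehat{X}_n$ collects the $a$ with at most $n$ zeros). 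Second, and relatedly, your sets $A_\pm=\{a:\lim_{r\to\infty}W(r;a)=\pm1\}$ are not open, and $A_+$ does not contain a left neighbourhood of $a=1$. The rest points $\pm1$ are saddle-type for the radial dynamics: writing $W=\pm(1-v)$ gives $(Nv_r)_r\approx 2v/r^2$ at large $r$, an Euler equation with one mode growing like $r^2$ and one decaying like $r^{-1}$. Hence convergence to $\pm1$ while remaining in $[-1,1]$ is exactly the codimension-one separatrix behaviour you are trying to construct; continuous dependence gives openness only for finite-time events, never for convergence to a non-attracting equilibrium. In particular, for $a$ slightly below $1$ the orbit starts with $W'(2m)=-a(1-a^2)/(2m)<0$ and moves away from $+1$ along the unstable direction; it does not converge to $+1$.

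Two further gaps. (i) The value $a_1=(1+\sqrt3)/(3\sqrt3+5)$ cannot be ``read off from the horizon formula'' $W_r(2m)=a(a^2-1)/(2m)$, since that relation holds for every $a$. In the paper it comes from the explicit Boutaleb-Joutei--Chakrabarti--Comtet solution $W_1=(c-r)/(r+3(c-1))$, $c=(3+\sqrt3)/2$ (with $m=1/2$), which both gives $a_1=W_1(1)$ and anchors the induction with a known global solution having exactly one zero. (ii) The two facts you defer to ``standard oscillation/comparison arguments'' are the real technical content: a Hamiltonian monotonicity argument ($H=r^2(W')^2/2+W^2/2-W^4/4$ increases where $WW'\le0$ at large $r$) showing that an orbit with a near-extremum close to $\mp1$ must cross zero exactly once more and then reach $|W|=1$ at finite radius, and a Pr\"ufer-angle estimate ($\tan\psi=rW'/W$, $\psi'\le-1/(12r)$ while $|W|\le\delta$) showing that small $a$ produces arbitrarily many zeros. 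Without these, neither the non-emptiness of the shooting sets, nor the exact zero count of the limiting solutions, nor the claimed ``unit jumps of the rotation count'' (which the paper never proves and does not need) can be established.
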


\begin{remark}
	There is an explicit formula for the first stationary solution
        (see \cite{BCC})
	$$W_1 = \frac{c-\frac{r}{2m}}{\frac{r}{2m}+3(c-1)}, \quad c=\frac{3+\sqrt{3}}{2}.$$
	This solution corresponds to  $\lim_{x\rightarrow -\infty}W_1(x)=a_1=\frac{1+\sqrt{3}}{3\sqrt{3}+5}$.
\end{remark}

We give a detailed proof of this result in the appendix, where we
follow arguments of Smoller, Wasserman, Yau and McLeod. The above
solutions are all nonlinearly instable :
\begin{theorem}[Main Theorem]
\label{Mainth}
For all $n\ge 1$ the solution $W_n$ of \eqref{YMSW} is unstable. More precisely there
exists $\epsilon_0>0$ and a sequence $(W_{0,n}^m,W_{1,n}^m)$ with $\Vert
(W_{0,n}^m,W_{1,n}^m)-(W_n,0)\Vert_{\cE}\rightarrow 0,\, m\rightarrow \infty$, but for all
$m$
\begin{equation*}
\sup_{t\ge 0}\Vert(W_n^m(t),\partial_tW_n^m(t))-(W_n,0)\Vert_{\cE}\ge \epsilon_0>0. 
\end{equation*} 
\end{theorem}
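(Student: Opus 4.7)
The plan is to combine the abstract nonlinear instability theorem of Section~\ref{Sec2} with a spectral analysis of the linearization $\cA_n = -\partial_x^2 + P(3W_n^2-1)$ of \eqref{YMSW} around $W_n$. The abstract theorem, applied to semilinear one-dimensional wave equations $\partial_t^2 W - \partial_x^2 W + f(W,x)=0$ of the type at hand (with well-posedness provided by Theorem~\ref{ThGEYM}), asserts that if the Schr\"odinger operator $-\partial_x^2 + \partial_W f(W_*,x)$ associated with a stationary solution $W_*$ has a strictly negative eigenvalue, then $W_*$ is nonlinearly Lyapunov unstable in the energy norm $\cE$. Theorem~\ref{Mainth} thus reduces to exhibiting, for every $n\geq 1$, a negative eigenvalue of $\cA_n$.

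Since $P\to 0$ at both ends of $\R$ (exponentially at the horizon, like $1/x^2$ at spatial infinity), so does $V_n:=P(3W_n^2-1)$, and Weyl's theorem yields $\sigma_{\mathrm{ess}}(\cA_n)=[0,\infty)$. By the min--max principle it suffices to produce a trial function $\phi\in H^1(\R)$ with $\langle\phi,\cA_n\phi\rangle<0$. A natural candidate is $\phi=W_n'$, which lies in $L^2(\R)$ by standard asymptotic analysis of the stationary ODE (exponential decay at the horizon end, decay $O(1/x^2)$ at spatial infinity). Differentiating the stationary equation $-W_n''+PW_n(W_n^2-1)=0$ one obtains
\[
\cA_n W_n' \;=\; -\,P'\,W_n(W_n^2-1).
\]
Writing $W_n(W_n^2-1)W_n'=\tfrac14\,\tfrac{d}{dx}(W_n^2-1)^2$ and integrating by parts twice (all boundary contributions vanish because $P, P'\to 0$ at $\pm\infty$) produces the identity
\[
\langle W_n',\cA_n W_n'\rangle \;=\; \tfrac14\int_\R P''(x)\,\bigl(1-W_n(x)^2\bigr)^2\,dx.
\]

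An explicit computation shows that $P''(x)$ has the sign of $3r^2-20mr+30m^2$: it is strictly negative on a bounded $r$-annulus containing the photon sphere $r=3m$, strictly positive outside, with $\int_\R P''\,dx=0$. On the other hand, the properties of $W_n$ recalled in Theorem~\ref{thstat} (the $n$ nodes, the asymptotic values $a_n\in(0,1)$ at $-\infty$ and $(-1)^n$ at $+\infty$) together with the asymptotic rate $W_n-(-1)^n=O(1/x)$ at spatial infinity force $(1-W_n^2)^2$ to be of order $1$ in much of the middle region (in particular at each of the $n$ nodes) and of order $1/x^2$ in the positive-$P''$ outer region, while the inner region is weighted by an exponentially small $P''$. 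Using $\int P''\,dx=0$ to balance the positive part against the negative part, a quantitative estimate delivers $\int P''(1-W_n^2)^2\,dx<0$ for every $n\geq 1$, hence a negative eigenvalue of $\cA_n$.

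The main obstacle is that last quantitative step: extracting from the analysis of $W_n$ carried out in the appendix enough control on the location of its nodes and on its decay rates at $\pm\infty$ to dominate the positive boundary contributions. Should the trial function $\phi=W_n'$ prove insufficient for some $n$, a robust backup is Sturm oscillation theory on $\R$, which links the number of negative eigenvalues of $\cA_n$ to the zero-count of a suitable zero-energy solution of $\cA_n$ --- for instance the derivative of the shooting family of the appendix with respect to its shooting parameter evaluated at $a_n$ --- whose zero-count can be shown by a continuity argument to be at least $n$, giving at least $n$ negative eigenvalues of $\cA_n$.
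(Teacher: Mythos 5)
Your overall strategy --- reduce Theorem \ref{Mainth} to producing a negative eigenvalue of $\cA_n$ and then invoke the abstract instability theorem of Section \ref{Sec2} --- is sound in outline, since the abstract proof really only uses the ground state $-\lambda_0^2$ (the hypothesis $\int V<0$ in \eqref{HV} is just the paper's sufficient condition for its existence). Your trial-function identity is also correct: differentiating the stationary equation gives $\cA_n W_n'=-P'W_n(W_n^2-1)$, and integration by parts yields $\langle W_n',\cA_n W_n'\rangle=\frac14\int_\R P''(1-W_n^2)^2\,dx$. But the decisive step --- strict negativity of this integral --- is asserted, not proved, and it is genuinely problematic. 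Since $\int_\R P''\,dx=0$, the identity rewrites as
\begin{equation*}
\langle W_n',\cA_n W_n'\rangle=-\frac14\int_\R P''\,W_n^2(2-W_n^2)\,dx,
\end{equation*}
so the leading terms cancel exactly and the sign is decided by a competition between the annulus where $P''<0$ (containing the photon sphere) and the two regions where $P''>0$ (near the horizon and at large $r$). The bounds the paper proves on $W_n$ (in units $2m=1$: $|W_n|\le a_n$ on $[1,3]$ and $|W_n|\le Q(r)$ for $r\ge3$, Proposition \ref{enc}) are \emph{upper} bounds; to conclude you additionally need \emph{lower} bounds on $W_n^2$ in the outer region where $P''>0$, which is exactly where all $n$ zeros of $W_n$ sit. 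With only the available bounds, the horizon-plus-annulus part of the integral above is bounded below by a \emph{negative} multiple of $a_n^2$ (the numerical constants work out the wrong way: $\int_{\{P''>0,\,r<3\}}P''\,dx<\int_{\{P''<0\}}|P''|\,dx$), while the outer part is only bounded below by $0$; for large $n$, where $a_n$ is small and the zeros spread to exponentially large radii, every contribution is small and of comparable size. So the estimate does not close, it is not even clear the claim is true for this trial function, and your Sturm-theory fallback (zero-counting for $\partial_aW_a|_{a=a_n}$) is likewise only sketched. The paper avoids this difficulty altogether: it verifies \eqref{HV} directly, i.e. $\int_\R V_n\,dx<0$, from the same upper bounds together with $a_1<\frac{2}{3\sqrt3}$; there $3W_n^2-1$ is negative by a definite margin on the bulk region, so no cancellation has to be tracked.

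There is a second gap: even granting the negative eigenvalue, the abstract theorem gives instability in $X=H^1\times L^2$, not in the energy space $\cE=\cH^1\times L^2$ of the statement, and your proposal attributes the $\cE$-conclusion to the abstract theorem without justification. The two are not equivalent: one only has $\Vert\cdot\Vert_{\cE}\lesssim\Vert\cdot\Vert_X$ (Gagliardo--Nirenberg/Sobolev), so a sequence with $\sup_{t\ge0}\Vert\psi^m(t)\Vert_X\ge\epsilon_0$ could a priori stay small in $\cE$. The paper spends a substantial part of the proof of Theorem \ref{Mainth} on exactly this point: it takes data along the ground state $\phi_0$ of $\cA_n$, proves Lemma \ref{lem2} ($\int P|\phi_0|^2\ge\lambda_0^2\int|\phi_0|^2$ and $-\int V|\phi_0|^2\ge0$), and re-runs the endgame of the fixed-point argument measuring $\psi(\tau)$ in the intermediate norm $\tilde{\cE}=\tilde{\cH}^1\times L^2$, for which $\Vert\cdot\Vert_{\cE}\gtrsim\Vert\cdot\Vert_{\tilde{\cE}}$ does hold. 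To complete your argument you would need this (or an equivalent) bridge between $X$ and $\cE$.
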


\begin{remark}
	We don't show in this paper that there is no stationary solution with $W(2m)>a_1$. We do not exclude either the fact that there may exist solutions with an infinite number of zeros which tend to zero at infinity. Our main theorem does not apply to this two categories of hypothetical stationary solutions.
\end{remark}

For $n$ given we construct initial data from $W_n$ as in Section
\ref{AnsatzforinitialdataYM}. Let $F_{n}$ be the corresponding
curvature at time $t=0$. We obtain
\begin{corollary}
\label{corstat}
For all $n\ge 1$ the solution $F_n$ of \eqref{eq:YM} is
unstable. More precisely there exists $\epsilon_0>0$ and a sequence of
initial data giving rise to the curvature $F_{0,n}^m$ with 
\begin{equation*}
E^{(\p_t)}(F_{0,n}^m-F_{n})\rightarrow 0,\quad m\rightarrow \infty,
\end{equation*}
but for all $m$
\begin{equation*}
\sup_{t\ge 0}E^{(\p_t)}(F_n^m(t)-F_{n})\ge \epsilon_0,
\end{equation*}
where $F_n^m(t)$ is the solution associated to the initial data
corresponding to the curvature $F_{0,n}^m$. 
\end{corollary}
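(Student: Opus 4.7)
The plan is to reduce the corollary to Theorem \ref{Mainth} via the explicit correspondence, in the magnetic Ansatz, between the curvature $F$ and the scalar $W$. Concretely, given $n\ge 1$, I take the sequence $(W_{0,n}^m, W_{1,n}^m)$ from Theorem \ref{Mainth}, feed it into the formulas \eqref{Ansatz}--\eqref{AnsatzE} to produce Yang-Mills initial data, and invoke Corollary \ref{Cor1} to obtain a global solution $F_n^m$ of \eqref{eq:YM}. Since both $F_n^m$ and $F_n$ lie in the magnetic Ansatz, the difference $F_n^m - F_n$ has, in the Boyer-Lindquist frame, ``kinetic/gradient'' components proportional to $\partial_t W_n^m$ and $w' := (W_n^m - W_n)'$, and a magnetic component proportional to $(W_n^m)^2 - W_n^2 = w(w+2W_n)$, with $w := W_n^m - W_n$. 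A direct evaluation of the energy--momentum tensor on this difference yields
\begin{equation*}
E^{(\partial_t)}(F_n^m(t) - F_n) = \|\partial_t W_n^m(t)\|_{L^2}^2 + \|w'(t)\|_{L^2}^2 + \tfrac{1}{2}\|w(t)(w(t)+2W_n)\|_{L^2_P}^2.
\end{equation*}

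The initial convergence is then routine: $|W_n|\le 1$ and $\int_\R P\,dx = 1/(2m) < \infty$ give, via Cauchy--Schwarz, a continuous embedding $L^4_P \hookrightarrow L^2_P$ and hence $\|w(w+2W_n)\|_{L^2_P}^2 \lesssim \|w\|_{L^4_P}^4 + \|w\|_{L^4_P}^2$. Combined with $\|\partial_t W_n^m\|_{L^2}^2 + \|w'\|_{L^2}^2 \le \delta_m^2$, where $\delta_m := \|(W_{0,n}^m - W_n, W_{1,n}^m)\|_\cE \to 0$, this yields $E^{(\partial_t)}(F_{0,n}^m - F_n) \lesssim \delta_m^2 + \delta_m^4 \to 0$.

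For the lower bound on the supremum, I use that $t\mapsto\|(w(t),\partial_t W_n^m(t))\|_\cE$ is continuous on $[0,\infty)$ by Theorem \ref{ThGEYM}; since it starts at $\delta_m\to 0$ and its supremum is $\ge \eta_0$ (the constant of Theorem \ref{Mainth}), the intermediate value theorem yields, for any fixed small $\eta\in(0,\eta_0)$ and all $m$ sufficiently large, a first time $t_m$ at which $\|(w(t_m),\partial_t W_n^m(t_m))\|_\cE = \eta$. At time $t_m$ the perturbation is still small, and heuristically $E^{(\partial_t)}(F_n^m(t_m) - F_n) \asymp \eta^2$.

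The \textbf{main obstacle} is that the two quantities $\|(w,v)\|_\cE$ and $E^{(\partial_t)}(F-F_n)^{1/2}$ are not globally equivalent: $\|w(w+2W_n)\|_{L^2_P}$ can be much smaller than $\|w\|_{L^4_P}$ when $w$ concentrates near the zero set of $w+2W_n$. I would overcome this using the mechanism behind Theorem \ref{Mainth}, in which the unstable sequence is built from an exponentially-growing linear mode proportional to an eigenfunction $\phi_n$ of $\cA_n$ with negative eigenvalue. At time $t_m$, while still essentially in the linear regime, $w(t_m)$ is close to a scalar multiple of $\phi_n$; since $w+2W_n \approx 2W_n$, one then has $\|w(w+2W_n)\|_{L^2_P}^2 \approx 4\|W_n w\|_{L^2_P}^2 \ge c_n \|w\|_{L^4_P}^2$ with a fixed $c_n>0$ (depending on $\int P W_n^2\phi_n^2\,dx$, which is positive because $\phi_n$ does not vanish identically on $\{W_n\ne 0\}$). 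This gives $E^{(\partial_t)}(F_n^m(t_m) - F_n) \ge c'\eta^2$ with $c'>0$, producing the desired $\epsilon_0$ of the corollary. Equivalently, one could re-run the abstract instability framework of Section \ref{Sec2} directly with $E^{(\partial_t)}$ in place of the $\cE$-norm, which avoids the norm mismatch altogether.
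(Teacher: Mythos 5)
Your reduction to the scalar problem, the formula for $E^{(\p_t)}(F_n^m(t)-F_n)$ in terms of $\dot W_n^m$, $w'$ and $w(w+2W_n)$, and the proof that the initial energies converge are all correct and coincide with the paper's argument (Cauchy--Schwarz with $\int P\,dx<\infty$ and $\Vert W_n\Vert_{L^\infty}\le 1$). You have also correctly isolated the real difficulty: the magnetic term $\Vert w(w+2W_n)\Vert_{L^2_P}$ is not bounded below by $\Vert w\Vert_{L^4_P}$, so the conclusion of Theorem \ref{Mainth} does not transfer to the energy by any norm equivalence.

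However, your resolution of this difficulty has a genuine gap. The claim that at the exit time $t_m$ the perturbation $w(t_m)$ is ``close to a scalar multiple of $\phi_n$'' is not contained in the statement of Theorem \ref{Mainth}; it is structural information internal to its proof, and even there it is only available at the special time $t=\tau=m$, where the fixed-point construction gives $\psi(\tau)=\psi_0+O(\rho^2)$ with $\psi_0$ proportional to the unstable eigenmode. At an intermediate-value exit time the unstable component behaves like $\rho\, e^{\lambda_0(t-\tau)}$ while the bound on the remaining components is $C\rho^2e^{2\beta(t-\tau)}$ with $2\beta<\lambda_0$; going backwards in time the second bound decays more slowly, so for $\eta$ much smaller than $\epsilon_0$ the stable/neutral part need not be relatively small at $t_m$, and the ``still essentially in the linear regime'' assertion cannot be deduced from the available estimates. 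Making your scheme rigorous would require re-opening the contraction argument and restricting to $\eta$ comparable to $\epsilon_0$. The paper avoids eigenfunction structure entirely: it extracts from the proof of Theorem \ref{Mainth} only the facts that the supremum is attained on $[0,m]$ and that $\Vert w\Vert_{L^2}\le\rho$ there, and then argues by dichotomy. Either $\sup_{t\ge 0}\int (w')^2\ge \epsilon_0^2/(4^4\rho^2)$, in which case the energy is already large; or it is small, and then the pointwise inequality $(A^2-B^2)^2\ge \tfrac12(A-B)^4-4B^2(A-B)^2$ together with the Hardy-type bound \eqref{eqcor3} shows that the error term $4\int PW_n^2w^2$ is at most $\epsilon_0/4$, so the $L^4_P$-instability \eqref{eqcor2} passes to the energy with $\epsilon_1=\epsilon_0/4$. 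Note finally that your fallback suggestion (re-running the abstract framework of Section \ref{Sec2} with $E^{(\p_t)}$ in place of the $\cE$-norm) is not immediate either: $E^{(\p_t)}(F-F_n)^{1/2}$ is not homogeneous in the perturbation, hence not a norm, while the contraction scheme is set up on Banach spaces.
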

\textbf{Acknowledgments.} The first author acknowledges support from
the ANR funding ANR-12-BS01-012-01. Both authors thank Sari Ghanem for
fruitful discussions on Yang-Mills equations. 
\section{Abstract setting}
\label{Sec2}
\subsection{Abstract result}
We consider the one dimensional wave equation
\begin{equation}
\label{AWE}
\left\{\begin{array}{rcl} \ddot{u}-u''+Vu&=&F(u),\\
u\vert_{t=0}&=&u_0,\\
\partial_tu\vert_{t=0}&=&u_1 \end{array}\right.
\end{equation}
with $\dot{}=\partial_t,\, '=\partial_x$ and
\begin{equation}
\tag{HV}\label{HV}
V\in C(\R)\cap L^1(\R),\, \lim_{\vert x\vert\rightarrow \infty}V(x)=0,\, \int_{\R}V(x)dx<0.
\end{equation}
We also suppose that 
\begin{equation}
\tag{HF}\label{HF}
\Vert F(u)-F(v)\Vert_{L^2}\le M_F(\Vert u\Vert_{H^1}+\Vert
v\Vert_{H^1})\Vert u-v\Vert_{H^1}
\end{equation}
for $\Vert u\Vert_{H^1}\le 1,\Vert v\Vert_{H^1}\le 1$. Let
$X=H^1\times L^2$. We then have the following 
\begin{theorem}
The zero solution of \eqref{AWE} is unstable. More precisely there
exists $\epsilon_0>0$ and a sequence $(u_0^m,u_1^m)$ with $\Vert
(u_0^m,u_1^m)\Vert_X\rightarrow 0,\, m\rightarrow \infty$, but for all
$m$
\begin{equation*}
\sup_{t\ge 0}\Vert(u^m(t),\partial_tu^m(t))\Vert_X\ge \epsilon_0>0. 
\end{equation*} 
Here $u^m(t)$ is the solution of \eqref{AWE} with initial data
$(u_0^m,u_1^m)$ and the supremum is taken over the maximal interval of
existence of $u^m(t)$. 
\end{theorem}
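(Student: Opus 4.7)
The strategy is to exploit hypothesis \eqref{HV} to find a negative eigenvalue $-\mu^2$ of the Schr\"odinger operator $\mathcal{A}:=-\partial_x^2+V$, producing an exponentially growing linear mode, and then to show via a bootstrap that the nonlinear flow cannot suppress this growth before the solution exits a fixed ball. For the eigenvalue: since $V\in C(\R)\cap L^1(\R)$ with $V\to 0$ at infinity, $V$ is a relatively compact perturbation of $-\partial_x^2$, so Weyl's theorem gives $\sigma_{\mathrm{ess}}(\mathcal{A})=[0,\infty)$ and any negative spectrum is discrete. Using the trial function $\phi_\lambda(x)=e^{-\lambda|x|}\in H^1(\R)$, a direct computation of the Rayleigh quotient,
\[
\frac{\langle\phi_\lambda,\mathcal{A}\phi_\lambda\rangle}{\|\phi_\lambda\|^2_{L^2}}=\lambda^2+\lambda\int_{\R}V(x)e^{-2\lambda|x|}\,dx,
\]
shows it is strictly negative for all sufficiently small $\lambda>0$, since $\int V<0$. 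Hence $\mathcal{A}$ admits an eigenvalue $-\mu^2<0$ with ground-state eigenfunction $\psi\in H^2(\R)$, normalized to $\|\psi\|_{L^2}=1$, decaying exponentially at infinity.

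\emph{Projection and bootstrap.} Take initial data $(u_0^m,u_1^m):=\delta_m(\psi,\mu\psi)$ with $\delta_m\to 0^+$, so that the linearized solution is $\delta_m e^{\mu t}\psi$. The scalar projection $p_m(t):=\langle u^m(t),\psi\rangle$ satisfies
\[
\ddot p_m-\mu^2 p_m=\langle F(u^m),\psi\rangle,\quad p_m(0)=\delta_m,\ \dot p_m(0)=\mu\delta_m,
\]
hence
\[
p_m(t)=\delta_m e^{\mu t}+\int_0^t\frac{\sinh(\mu(t-s))}{\mu}\langle F(u^m(s)),\psi\rangle\,ds.
\]
Setting $B:=2\|\psi\|_{H^1}$, I would run a continuity argument on the hypothesis $\|u^m(s)\|_{H^1}\le B\delta_m e^{\mu s}$. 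Under it, \eqref{HF}, together with $F(0)=0$ (since $u\equiv 0$ solves the equation), gives $\|F(u^m(s))\|_{L^2}\le M_F B^2\delta_m^2 e^{2\mu s}$. Plugging back into the projection formula yields the lower bound $p_m(t)\ge \tfrac12\delta_m e^{\mu t}$ as long as $\delta_m e^{\mu t}\le\mu^2/(2M_F B^2)$; separately, a Duhamel estimate for $v^m:=u^m-\delta_m e^{\mu t}\psi$, based on the propagator bound $\|e^{s\mathcal{L}}\|_{X\to X}\le Ke^{\mu s}$ for the first-order wave system $\mathcal{L}$, yields $\|u^m(t)\|_{H^1}\le \delta_m e^{\mu t}\|\psi\|_{H^1}+KM_F B^2\delta_m^2 e^{2\mu t}/\mu$, which strictly improves the bootstrap as long as $\delta_m e^{\mu t}<\eta$ for a fixed $\eta>0$ depending only on $\mu,K,M_F,\|\psi\|_{H^1}$. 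Taking $\epsilon_0:=\eta/4$ and $t_m^*:=(1/\mu)\log(\eta/\delta_m)$ (finite for $\delta_m$ small), we conclude $\|u^m(t_m^*)\|_X\ge|p_m(t_m^*)|\ge\eta/2>\epsilon_0$, while $\|(u_0^m,u_1^m)\|_X\to 0$, which is the claimed instability.

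\emph{Main obstacle.} The nontrivial ingredient is the operator-norm bound $\|e^{s\mathcal{L}}\|_{X\to X}\le Ke^{\mu s}$ for the wave-system generator. Because $\mathcal{A}$ has negative eigenvalues, $\mathcal{L}$ carries exponentially growing modes $e^{\pm\mu_j t}$ on a finite-dimensional subspace; one handles this via a spectral decomposition $X=X_u\oplus X_s$, where $X_u$ is the $\mathcal{L}$-invariant unstable subspace spanned by the negative-eigenvalue eigenfunctions of $\mathcal{A}$ together with their wave counterparts, and on $X_s$ the restricted operator $\mathcal{A}|_{X_s}$ is non-negative, so energy conservation yields a bounded propagator there. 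Choosing $\psi$ to be the ground state ensures that the largest growth rate is exactly $\mu$, consistent with the bootstrap; the absence of a spectral gap at the bottom of $\sigma_{\mathrm{ess}}(\mathcal{A})$ causes no issue since only operator-norm (not dispersive) bounds are needed.
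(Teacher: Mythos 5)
Your overall strategy (initial data along the ground-state eigenmode, a projection ODE, and a forward-in-time bootstrap) is genuinely different from the paper's proof, which instead prescribes the unstable-mode amplitude at a large time $\tau$, solves a mixed backward/forward integral equation by contraction in the weighted space $Z$, and then lets $\tau\to\infty$. Your route is viable in principle and its nonlinear part is more elementary. However, the step you defer as the ``main obstacle'' --- the propagator bound $\Vert e^{s\mathcal{L}}\Vert_{X\to X}\le Ke^{\mu s}$ --- is exactly where the real work of the paper lies, and your proposed justification of it contains two genuine errors.

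First, under \eqref{HV} the negative spectrum of $\cA$ need not be finite: $V\in C(\R)\cap L^1(\R)$ with $V\to 0$ admits potentials behaving like $-c/x^2$ at infinity with $c>1/4$ (such potentials are integrable), and these produce infinitely many negative eigenvalues accumulating at $0$; this is precisely why the paper allows $\cN=\N$. As $\lambda_i\to 0$ the two eigendirections $(\phi_i,\pm\frac{\lambda_i}{i}\phi_i)$ of the first-order system become parallel, so the projections onto $X_i^{\pm}$ degenerate, and one must prove a bound uniform in $i$; the paper's matrix estimate only yields $C_\epsilon e^{(\lambda_i+\epsilon)\vert t\vert}$, with an $\epsilon$-loss coming from factors $\sinh(\lambda_i t)/\lambda_i\approx t$. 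Second, and more seriously, your claim that on the stable subspace ``energy conservation yields a bounded propagator'' is false in $X=H^1\times L^2$: the conserved form $\<\cA u_0,u_0\>+\Vert u_1\Vert^2$ does not control $\Vert u_0\Vert_{L^2}$, because $\sigma_{ess}(\cA)=[0,\infty)$ has no gap at $0$. Already for $V=0$ (free waves) low-frequency data make $\Vert u_0(t)\Vert_{L^2}$ grow linearly in $t$. What is true is only a sub-exponential bound $C_\epsilon e^{\epsilon t}$ for every $\epsilon>0$, which the paper obtains via the modified norm $\<\cA u_0,u_0\>+\Vert u_1\Vert^2+\epsilon^2\Vert u_0\Vert^2$ and Gronwall. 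Combining these corrected estimates --- and using the strict gap $\lambda_1<\lambda_0$ to absorb both the $\epsilon$-losses and the polynomial factors on the $X_i$, $i\ge 1$ --- one does recover $\Vert e^{s\mathcal{L}}\Vert_{X\to X}\le Ke^{\mu s}$, so your bootstrap can be repaired; but the repair is essentially Proposition \ref{prop3} and Corollary \ref{cor1} of the paper, which your sketch does not supply.
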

Let 
\begin{equation*}
\cA=-\partial_x^2+V,\quad D(\cA)=H^2(\R).
\end{equation*}
We note that $\cA$ is a selfadjoint operator. 
\subsection{Spectral analysis of $\cA$}
\begin{proposition}
We have 
\begin{equation*}
\sigma(\cA)=\{-\lambda_n^2\}_{n\in \cN}\cup [0,\infty),
\end{equation*}
where $-\lambda_n^2,\quad \lambda_0> \lambda_1>....\lambda_n> ...>0$ is a finite ($\cN=\{0,...,N\}$) or infinite $(\cN=\N)$ sequence of negative eigenvalues with only possible
accumulation point $0$. 
\end{proposition}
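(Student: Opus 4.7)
The plan has two substantive parts: identifying the essential spectrum as $[0,\infty)$, and exhibiting at least one negative eigenvalue.

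First, since $V$ is continuous and vanishes at infinity we have $V\in L^\infty(\R)$, so $\cA=-\partial_x^2+V$ is self-adjoint on $H^2(\R)$ and bounded below. To locate $\sigma_{\mathrm{ess}}(\cA)$ I would apply Weyl's theorem after verifying that $V$ is relatively compact with respect to $\cA_0:=-\partial_x^2$. Splitting $V=V\chi_{|x|\le R}+V\chi_{|x|>R}$, the first piece composed with $(\cA_0+1)^{-1}$ is compact by Rellich's theorem, while the second has operator norm at most $\|V\chi_{|x|>R}\|_\infty\to 0$ as $R\to\infty$. Hence $V(\cA_0+1)^{-1}$ is a norm limit of compact operators, hence compact. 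Weyl's theorem then gives $\sigma_{\mathrm{ess}}(\cA)=\sigma_{\mathrm{ess}}(\cA_0)=[0,\infty)$, and any spectrum of $\cA$ in $(-\infty,0)$ consists automatically of isolated eigenvalues of finite multiplicity whose only possible accumulation point is the edge $0$ of the essential spectrum. Listing the distinct such eigenvalues as $-\lambda_0^2<-\lambda_1^2<\dots$ yields the advertised sequence indexed by some $\cN=\{0,\dots,N\}$ or $\cN=\N$.

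Second, to see that $\cN$ is nonempty I would use the sign hypothesis $\int V<0$ via a trial-function argument that relies on the one-dimensional nature of the problem. Taking $\phi_\epsilon(x)=e^{-\epsilon|x|}\in H^1(\R)$ one checks that $\|\phi_\epsilon\|_{L^2}^2=1/\epsilon$ and $\|\phi_\epsilon'\|_{L^2}^2=\epsilon$, so the associated quadratic form of $\cA$ satisfies
\[
\frac{\langle\phi_\epsilon,\cA\phi_\epsilon\rangle}{\|\phi_\epsilon\|_{L^2}^2}=\epsilon^2+\epsilon\int_\R V(x)\,e^{-2\epsilon|x|}\,dx.
\]
By dominated convergence (using $V\in L^1$) the integral tends to $\int V<0$ as $\epsilon\to 0^+$, so the right-hand side is strictly negative for all sufficiently small $\epsilon$. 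The min-max principle then yields $\inf\sigma(\cA)<0$, and being strictly below $\sigma_{\mathrm{ess}}(\cA)$ this infimum is attained as an eigenvalue.

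The only genuinely non-routine step is the trial-function construction: it exploits the one-dimensional feature that a wave packet of width $1/\epsilon$ pays only kinetic energy of order $\epsilon$, which is beaten by the potential contribution of order $\epsilon\int V$ precisely when $\int V<0$. All other steps are standard consequences of Weyl's theorem together with the decay and integrability of $V$.
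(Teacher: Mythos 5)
Your proposal is correct and follows essentially the same two-step route as the paper: Weyl's theorem to identify $\sigma_{\mathrm{ess}}(\cA)=[0,\infty)$ (the paper verifies compactness of the resolvent difference $(\cB-z^2)^{-1}-(\cA-z^2)^{-1}$, you verify relative compactness of $V$ via Rellich plus a cutoff, which is the same decay-of-$V$ mechanism), followed by a spread-out trial function exploiting $\int_{\R} V\,dx<0$ to produce spectrum below $0$, hence an eigenvalue. The only cosmetic difference is the trial function: you take $e^{-\epsilon|x|}$ in the form domain $H^1$, while the paper takes rescaled bumps $\chi(x/R)$; both make the kinetic term vanish faster than the potential term, giving a negative Rayleigh quotient.
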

\proof 

First note that $\sigma(\cA)\cap \R^-\neq \emptyset$. Indeed let $\chi\in C_0^{\infty}(\R),\, \chi(0)=1,\, \chi\ge 0,\,
\chi_R(.)=\chi(\frac{.}{R})$. Then 
\begin{equation*}
\<\cA\chi_R,\chi_R\>=\frac{1}{R}\int \vert \chi'(x)\vert^2dx+\int
V(x)\chi_R^2dx\rightarrow \int V(x) dx<0,\quad R\rightarrow
\infty. 
\end{equation*}
We now introduce the comparison operator 
\begin{equation*}
\cB=-\partial_x^2. 
\end{equation*}
We compute
\begin{equation*}
(\cB-z^2)^{-1}-(\cA-z^2)^{-1}=(\cA-z^2)^{-1}V(\cB-z^2)^{-1}.
\end{equation*}
Using that $\lim_{x\rightarrow \pm \infty} V(x)=0$ we see that this is
a compact operator. By the Weyl criterion
\begin{equation*}
\sigma_{ess}(\cA)=\sigma_{ess}(\cB)=[0,\infty). 
\end{equation*}
On the other hand we already know that $\cA$ has negative
spectrum. It therefore has at least one negative eigenvalue. $\cA$
being bounded from below the proposition follows. 
\qed 

\subsection{The wave equation as a first order equation}
\subsubsection{The linear equation}
The equation
\begin{equation*}
\ddot{v}+\cA v=0
\end{equation*}
is equivalent to 
\begin{equation*}
\partial_t\psi=L\psi,\quad L=\left(\begin{array}{cc} 0 & i \\
    i\cA & 0 \end{array}\right),\quad \psi=\left(\begin{array}{c} v
    \\ \frac{1}{i} \partial_t v \end{array}\right).  
\end{equation*}
\begin{remark}
Let 
\begin{equation*}
\cA\phi_0=-\lambda^2\phi_0.
\end{equation*}
Then we have 
\begin{enumerate}
\item $\phi_0\in H^2$. 
\item Let $\psi^{\pm}_0=\left(\begin{array}{c} \phi_0 \\ \pm\frac{1}{i} \lambda
      \phi_0\end{array}\right)$. Then 
\begin{equation*}
L\psi^{\pm}_0=\pm\lambda\psi^{\pm}_0.
\end{equation*}
\end{enumerate}
\end{remark}
Let $V_-$ be the negative part of the potential. For $\mu^2>\Vert
V_-\Vert_{\infty}(\ge \lambda_0^2)$ we introduce the scalar product 
\begin{equation*}
\< u,v\>_{\mu}=\<(\cA+\mu^2)u_0,v_0\>+\<u_1,v_1\>
\end{equation*}
where $\<.,.\>$ is the usual scalar product on $\cH=L^2(\R)$. We note
$\Vert.\Vert_{\mu}$ the corresponding norm. It is easy to check that
the norms $\Vert .\Vert_{\mu}$ and
$\Vert.\Vert_X$ are equivalent. 
\begin{proposition}
$L$ is the generator of a $C^0-$ semigroup $e^{tL}$ on $X$.
\end{proposition}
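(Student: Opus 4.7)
The strategy is to split $L = L_\mu + B$, where $L_\mu$ is skew-adjoint on $X$ equipped with the equivalent inner product $\<\cdot,\cdot\>_\mu$ and $B$ is a bounded operator on $X$. Stone's theorem applied to $L_\mu$ together with the bounded-perturbation theorem for $C^0$-semigroup generators will then give the conclusion, because $\Vert\cdot\Vert_\mu$ and $\Vert\cdot\Vert_X$ are equivalent.

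Since the potential $V$ is continuous, tends to zero at infinity and hence lies in $L^\infty(\R)$, the quadratic-form bound $\cA\ge -\Vert V_-\Vert_\infty$ combined with $\mu^2>\Vert V_-\Vert_\infty$ shows that $A_\mu := \cA+\mu^2$ is self-adjoint and strictly positive on $\cH=L^2(\R)$, with form domain $H^1(\R)$ and operator domain $H^2(\R)$. In particular $A_\mu^{1/2}$ is self-adjoint with domain $H^1(\R)$, and the map
$$U := \mathrm{diag}(A_\mu^{1/2},1) : (X,\<\cdot,\cdot\>_\mu) \to L^2(\R)\times L^2(\R)$$
is a unitary isomorphism. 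Introduce
$$L_\mu = \left(\begin{array}{cc} 0 & i \\ iA_\mu & 0\end{array}\right),\quad D(L_\mu)=H^2(\R)\times H^1(\R),\qquad B:= L-L_\mu = \left(\begin{array}{cc} 0 & 0 \\ -i\mu^2 & 0\end{array}\right),$$
so that $B$ is bounded on $X$ (it even factors through multiplication by a constant).

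A direct computation gives
$$U L_\mu U^{-1} = i\left(\begin{array}{cc} 0 & A_\mu^{1/2} \\ A_\mu^{1/2} & 0\end{array}\right) =: iT$$
on $L^2\times L^2$, with domain $H^1\times H^1$. Diagonalising the $2\times 2$ matrix $\bigl(\begin{smallmatrix} 0 & 1 \\ 1 & 0\end{smallmatrix}\bigr)$ shows that $T$ is unitarily equivalent to the self-adjoint operator $A_\mu^{1/2}\oplus(-A_\mu^{1/2})$; hence $T$ is self-adjoint, $iT$ is skew-adjoint, and by unitary equivalence $L_\mu$ is skew-adjoint on $(X,\<\cdot,\cdot\>_\mu)$. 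Stone's theorem then yields a strongly continuous unitary group $(\rme^{tL_\mu})_{t\in\R}$ on that Hilbert space.

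Finally, the standard bounded-perturbation theorem for semigroup generators implies that $L = L_\mu + B$ generates a $C^0$-semigroup on $(X,\<\cdot,\cdot\>_\mu)$, and equivalence of the norms $\Vert\cdot\Vert_\mu$ and $\Vert\cdot\Vert_X$ transports this to $X$. The only mildly delicate point is the self-adjointness of $T$ (equivalently, the skew-adjointness of $L_\mu$), which the diagonalisation sidesteps by reducing the question to the standard spectral calculus for the positive self-adjoint operator $A_\mu^{1/2}$.
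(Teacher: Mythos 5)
Your proposal is correct and follows essentially the same route as the paper: the same splitting $L=L_\mu+B_\mu$, skew-adjointness of $L_\mu$ on $(X,\langle\cdot,\cdot\rangle_\mu)$ (Stone's theorem), then the bounded-perturbation theorem and equivalence of the norms $\Vert\cdot\Vert_\mu$ and $\Vert\cdot\Vert_X$. The only difference is that you verify the skew-adjointness of $L_\mu$ explicitly via the unitary $\mathrm{diag}(A_\mu^{1/2},1)$ and diagonalisation, a point the paper simply asserts.
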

\proof 

Let $\mu^2>\Vert V_-\Vert$ and
\begin{equation*}
L_{\mu}=\left(\begin{array}{cc} 0 & i\\ i(\cA+\mu^2) &
    0 \end{array}\right),\quad B_{\mu}=\left(\begin{array}{cc} 0 & 0\\ -i\mu^2 &
    0 \end{array}\right).
\end{equation*}
$iL_{\mu}$ is a selfadjoint operator on $(X, \<.,.\>_{\mu})$ and in particular the
generator of a $C^0-$ semigroup $e^{L_{\mu}t}$. We have
$L=L_{\mu}+B_{\mu}$. $B_{\mu}$ being bounded, we can apply
\cite[Theorem 3.1.1]{Pa} to see that $L$ is the generator of
a $C^0-$ semigroup on $(X, \Vert .\Vert_{\mu})$ and thus on $(X,\Vert .\Vert_X)$. 
\qed

Let now 
\begin{equation*}
M_i=\left(\begin{array}{cc} \one & \one \\
    \frac{\lambda_i}{i} & -\frac{\lambda_i}{i} \end{array} \right). 
\end{equation*}

Note that $det M_i=2i\lambda_i\neq 0$ and that $M_i$ is thus invertible. We define
$P_i=\one_{\{-\lambda_i^2\}}(\cA)M_i$ and $X_i=P_iX$. We also define
$X_{\infty}=\one_{\R^+}(\cA)\one_2X.$  Here
$\one_{\{-\lambda_i^2\}}(\cA)$ and $\one_{\R^+}(\cA)$ are defined by
the spectral theorem. In particular $\one_{\{-\lambda_i^2\}}(\cA)$ is
the projection on the eigenspace of $\cA$ associated to the eigenvalue
$-\lambda_i^2$. 
\begin{lemma}
\begin{equation*}
X=\left(\oplus_{i\in \cN}X_i\right)\oplus X_{\infty}.
\end{equation*}
\end{lemma}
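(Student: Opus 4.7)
The plan is to transfer the spectral decomposition of $\cA$ on $L^2$ established in the previous proposition to the product space $X=H^1\times L^2$ componentwise. The spectral theorem applied to the self-adjoint operator $\cA$ gives the orthogonal decomposition
\begin{equation*}
L^2(\R)=\bigoplus_{i\in\cN}E_i\oplus L^2_{\rm c},\qquad E_i:=\Ker(\cA+\lambda_i^2),\quad L^2_{\rm c}:=\one_{\R^+}(\cA)L^2(\R),
\end{equation*}
where each $E_i$ is one-dimensional, spanned by a normalized eigenfunction $\phi_i\in H^2$. Unpacking the definition $P_i=\one_{\{-\lambda_i^2\}}(\cA)M_i$ (with the spectral projection acting componentwise on pairs), I compute
\begin{equation*}
P_i(u_1,u_2)=\bigl(\<u_1+u_2,\phi_i\>\phi_i,\;\tfrac{\lambda_i}{i}\<u_1-u_2,\phi_i\>\phi_i\bigr),
\end{equation*}
and by varying $(u_1,u_2)$ one sees that $X_i=E_i\times E_i$ is a two-dimensional subspace. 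The definition of $X_\infty$ yields $X_\infty=(L^2_{\rm c}\cap H^1)\times L^2_{\rm c}$.

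The key point is to check that the above $L^2$-decomposition restricts to a decomposition of $H^1$. Since $V$ is bounded (because $V\in C(\R)$ vanishes at infinity), the quadratic form domain of $\cA$ coincides with $H^1(\R)$; more precisely, for any $\mu^2>\|V_-\|_\infty$, the norm $\|(\cA+\mu^2)^{1/2}u\|_{L^2}$ is equivalent to $\|u\|_{H^1}$. The spectral projections $\one_{\{-\lambda_i^2\}}(\cA)$ and $\one_{\R^+}(\cA)$ commute with $(\cA+\mu^2)^{1/2}$ by functional calculus, so they preserve $H^1$. Moreover the partial sums of the spectral expansion are Cauchy in the $(\cA+\mu^2)^{1/2}$-norm, since the eigenvalues $\{-\lambda_i^2\}_{i\in\cN}$ are bounded, so the series converges in $H^1$. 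This yields $H^1(\R)=\bigoplus_{i\in\cN}E_i\oplus(L^2_{\rm c}\cap H^1)$.

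Combining the two decompositions componentwise, every $(u_1,u_2)\in X$ admits the convergent expansion
\begin{equation*}
(u_1,u_2)=\sum_{i\in\cN}\bigl(\one_{\{-\lambda_i^2\}}(\cA)u_1,\;\one_{\{-\lambda_i^2\}}(\cA)u_2\bigr)+\bigl(\one_{\R^+}(\cA)u_1,\;\one_{\R^+}(\cA)u_2\bigr),
\end{equation*}
whose $i$-th term lies in $E_i\times E_i=X_i$ and whose final term lies in $X_\infty$. Directness of the sum follows from mutual $L^2$-orthogonality: for $i\neq j$, any element of $X_i\cap X_j$ has both components in $E_i\cap E_j=\{0\}$, and analogously $X_i\cap X_\infty=\{0\}$. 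The only real obstacle in the argument is the $H^1$-convergence in the second paragraph, which is handled cleanly by the identification of $H^1$ with the form domain of $\cA$.
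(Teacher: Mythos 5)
Your proof follows essentially the same route as the paper: both transfer the spectral decomposition of $\cA$ to $X$ componentwise, and your identifications $X_i=E_i\times E_i$ and $X_\infty=(L^2_{\rm c}\cap H^1)\times L^2_{\rm c}$, together with the form-domain argument showing that the spectral projections preserve $H^1$ and that the expansion converges there, make explicit a convergence point that the paper passes over silently. (The simplicity of the eigenvalues that you invoke is true for one-dimensional Schr\"odinger operators but is never needed: everything goes through with $E_i$ the full eigenspace.) The one place where your write-up is weaker than the paper's is the directness of the sum. You justify it by pairwise trivial intersections ($X_i\cap X_j=\{0\}$ for $i\neq j$, and $X_i\cap X_\infty=\{0\}$); for a sum of three or more subspaces this is not a valid inference --- three distinct lines through the origin in $\R^2$ intersect pairwise in $\{0\}$ yet do not form a direct sum. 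What does suffice is exactly the mutual orthogonality you invoke in the same sentence, used the way the paper uses it for uniqueness: if $\sum_{i\in\cN} x_i + x_\infty=0$ with $x_i\in X_i$ and $x_\infty\in X_\infty$, apply $\one_{\{-\lambda_j^2\}}(\cA)$ and $\one_{\R^+}(\cA)$ componentwise to this identity; since the components of $x_i$ lie in $E_i$ and those of $x_\infty$ in $L^2_{\rm c}$, each projection annihilates all terms but one, forcing $x_j=0$ for every $j$ and $x_\infty=0$. With the uniqueness step stated this way, your proof is complete and matches the paper's.
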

\begin{remark}
Note that the sum is orthogonal with respect to the scalar product
$\<.,.\>_{\mu}$. 
\end{remark}
\proof 

Let $(\phi,\psi)\in X$. We put
\begin{eqnarray*}
\phi_i&=&\one_{\{-\lambda_i^2\}}(\cA)\phi,\quad
\psi_i=\one_{\{-\lambda_i^2\}}(\cA)\psi,\quad
\left(\begin{array}{c} \tilde{\phi}_i \\ \tilde{\psi}_i\end{array}\right)=M_i^{-1}\left(\begin{array}{c} \phi_i \\ \psi_i\end{array}\right).
\end{eqnarray*}
Since $\cA$ is self-adjoint, we can write
$$\phi = \sum_{i\in \cN} \phi_i +  \one_{\R^+}(\cA)\phi, \quad \psi = \sum_{i\in \cN} \psi_i +  \one_{\R^+}(\cA)\psi.$$
Then 
\begin{equation*}
\left(\begin{array}{c} \phi \\
    \psi\end{array}\right)=\sum_{i\in \cN}M_i\left(\begin{array}{c}
    \tilde{\phi}_i \\
    \tilde{\psi}_i\end{array}\right)+\left(\begin{array}{c}
    \one_{\R^+}(\cA)\phi \\ \one_{\R^+}(\cA)\psi \end{array}\right)
\end{equation*}
gives the required decomposition.  For uniqueness let 
\begin{equation*}
\left(\begin{array}{c} \phi_i \\
    \psi_i\end{array}\right)=\sum_{i\in\cN}M_i\left(\begin{array}{c}
    \tilde{\phi}_i \\
    \tilde{\psi}_i\end{array}\right)+\left(\begin{array}{c}
    \phi_{\infty} \\ \psi_{\infty} \end{array}\right)
\end{equation*}
Applying $\one_{\R^+}(\cA),\, \one_{\{-\lambda_i^2\}}(\cA)$ to each line immediately gives 
\begin{equation*}
\phi_{\infty}=\one_{\R^+}(\cA)\phi,\quad
\psi_{\infty}=\one_{\R^+}(\cA)\psi,\quad
\left(\begin{array}{c} \tilde{\phi}_i \\ \tilde{\psi}_i\end{array}\right)=M_i^{-1} \left(\begin{array}{c} \phi_i \\ \psi_i\end{array}\right),
\end{equation*}
where $\psi_i=\one_{\{-\lambda_i^2\}}(\cA)\psi,\,
\phi_i=\one_{\{-\lambda_i^2\}}(\cA)\phi$.
\qed

Let 
\begin{equation*}
X_i^{\pm}=M_i\one_{\{-\lambda_i^2\}}(\cA)P_{\pm}X,
\end{equation*}
where $P_+(\phi,\psi)=(\phi,0),\, P_-(\phi,\psi)=(0,\psi)$. Clearly
$X_i=X_i^+\oplus X_i^-$ and thus
\begin{equation*}
X=\left(\bigoplus_{i\in \cN}(X_i^+\oplus X_i^-)\right)\oplus X_{\infty}.
\end{equation*}

\begin{remark}
	Let $(\phi_i,\psi_i) \in X_i^{\pm}$. Then $L (\phi_i,\psi_i)= \pm \lambda_i (\phi_i,\psi_i)$.
\end{remark}

\begin{remark}
On $X_i$ the norm $\Vert.\Vert_{{\sqrt{2}\lambda_i}}$ is equivalent to
the norm $\Vert.\Vert_X$ and $X_i^+,\, X_i^-$ are orthogonal
with respect to this scalar product. Indeed :
\end{remark}
\begin{equation*}
\left\<\left(\begin{array}{c} \phi \\
    \frac{\lambda_i}{i}\phi \end{array}\right),\left(\begin{array}{c}
    \psi \\
    -\frac{\lambda_i}{i}\psi \end{array}\right)\right\>_{\sqrt{2}\lambda_i}=\lambda_i^2\<\phi,\psi\>-\lambda_i^2\<\phi,\psi\>=0. 
\end{equation*}

\begin{proposition}
\label{prop3}
\begin{enumerate}
\item The spaces $X_i,\, X_{\infty}$ are $e^{tL}$ invariant. 
\item For all $\epsilon>0$ there exists $C_{\epsilon}>0$ such that for all $i\in \cN$ and for
  all $t\in \R$
\begin{equation*}
\Vert e^{tL}\vert_{X_i}\Vert_{X\rightarrow X}\le C_{\epsilon} e^{(\lambda_i+\epsilon)\vert t\vert}.
\end{equation*}
\item For all $\epsilon>0$ there exists $C_{\epsilon}>0$ such that for
  all $t\in \R$
\begin{equation*}
\Vert e^{tL}\vert_{X_{\infty}}\Vert_{X\rightarrow X} \le C_{\epsilon} e^{\epsilon \vert
  t\vert}. 
\end{equation*}
\end{enumerate}
\end{proposition}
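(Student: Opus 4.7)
The plan is to exploit the spectral decomposition of $\cA$ so that on each invariant subspace the semigroup estimate follows either from an explicit formula (on $X_i$) or from a modified energy argument (on $X_\infty$). For part (1), the orthogonal projectors $\one_{\{-\lambda_i^2\}}(\cA)$ and $\one_{\R^+}(\cA)$ commute with $\cA$ and hence, acting diagonally as $2\times 2$ matrices, with both $L_\mu$ and the bounded part $B_\mu$. Therefore they commute with $L$ and with $e^{tL}$, which gives invariance of $X_i=P_iX$ and $X_\infty$.

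For part (2), on $X_i$ both components lie in the $-\lambda_i^2$ eigenspace of $\cA$, so the equation $\ddot v+\cA v=0$ reduces to $\ddot v=\lambda_i^2 v$ with explicit solution
\begin{equation*}
v(t)=\cosh(\lambda_i t)\,v(0)+\tfrac{\sinh(\lambda_i t)}{\lambda_i}\,\dot v(0).
\end{equation*}
Since $V\in L^\infty(\R)$ and $\lambda_i\le \lambda_0$, the eigenvalue equation $-\phi''=(-V-\lambda_i^2)\phi$ yields $\|\phi\|_{H^1}\le C\|\phi\|_{L^2}$ uniformly in $i$ for every eigenfunction $\phi$. Combining this with the elementary inequalities $|\sinh(\lambda_i t)|/\lambda_i\le |t|\,e^{\lambda_i|t|}$ and $\cosh(\lambda_i t)\le e^{\lambda_i|t|}$ gives $\|e^{tL}|_{X_i}\|_{X\to X}\le C(1+|t|)\,e^{\lambda_i|t|}$ with $C$ independent of $i$, and absorbing $(1+|t|)\le C_\epsilon e^{\epsilon|t|}$ closes the bound.

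For part (3), on $X_\infty$ the operator $\cA$ is non-negative, so $\cA+\epsilon^2\ge\epsilon^2>0$ for every $\epsilon>0$, and I introduce the norm
\begin{equation*}
\|(\phi,\psi)\|_{\epsilon^2}^2=\langle(\cA+\epsilon^2)\phi,\phi\rangle+\|\psi\|^2,
\end{equation*}
which is equivalent to $\|\cdot\|_X$ on $X_\infty$ (one direction uniformly, the other with a constant that degrades like $\epsilon^{-1}$). The operator $iL_{\epsilon^2}$ is selfadjoint in this norm, so $\|e^{tL_{\epsilon^2}}\|_{\epsilon^2}=1$, while $B_{\epsilon^2}(\phi,\psi)=(0,-i\epsilon^2\phi)$ satisfies $\|B_{\epsilon^2}\|_{\epsilon^2\to\epsilon^2}\le\epsilon$ thanks to the bound $\|\phi\|^2\le\epsilon^{-2}\|(\phi,\psi)\|_{\epsilon^2}^2$ available on $X_\infty$. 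Bounded perturbation (or a direct Duhamel/Gronwall argument) then gives $\|e^{tL}\|_{\epsilon^2\to\epsilon^2}\le e^{\epsilon|t|}$, and passing back to $\|\cdot\|_X$ produces the claimed $C_\epsilon e^{\epsilon|t|}$.

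The main obstacle is the uniformity in $i$ in part (2) as $\lambda_i\to 0$: a naive semigroup bound based on the $\|\cdot\|_{\sqrt{2}\lambda_i}$ norm alluded to in the preceding remark degrades because its equivalence constants with $\|\cdot\|_X$ blow up as $\lambda_i\to 0$. The $\cosh/\sinh$ formula above is immune to that issue (since $\sinh(\lambda_i t)/\lambda_i\to t$), and this is exactly what lets a single $C_\epsilon$ serve for all $i$ once the polynomial-in-$t$ factor is traded for an $e^{\epsilon|t|}$.
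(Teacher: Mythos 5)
Your proof is correct and follows essentially the same route as the paper: your explicit $\cosh/\sinh$ formula on $X_i$ is exactly the paper's conjugated matrix $M_i\,\mathrm{diag}(e^{\lambda_i t},e^{-\lambda_i t})\,M_i^{-1}$, exploited via the same key inequality $|\sinh(\lambda_i t)|/\lambda_i\le |t|\cosh(\lambda_i t)\le C_\epsilon e^{(\lambda_i+\epsilon)|t|}$, and your norm $\Vert\cdot\Vert_{\epsilon^2}$ on $X_\infty$ coincides with the paper's $\Vert\cdot\Vert_{X_\epsilon}$, your bounded-perturbation-of-a-unitary-group argument being equivalent to the paper's Gronwall computation. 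The only (harmless) deviations are cosmetic: you get uniformity in $i$ from the eigenfunction bound $\Vert\phi\Vert_{H^1}\lesssim\Vert\phi\Vert_{L^2}$ and work directly in the $X$-norm, whereas the paper works in the uniformly equivalent $\mu$-norm.
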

\proof 

(1) We have 
\begin{equation*}
e^{tL}M_i\one_{\{-\lambda_i^2\}}(\cA)\one_2\left(\begin{array}{c}
    \phi \\ \psi\end{array}\right)=M_i\one_{\{-\lambda_i^2\}}(\cA)\one_2\left(\begin{array}{c}
    e^{t\lambda_i} \phi \\ e^{-t\lambda_i}\psi \end{array}\right)
\end{equation*}
and thus $X_i$ is invariant under the evolution. 
The fact that $X_{\infty}$ is invariant follows from the
fact that $\one_{\R^+}(\cA)$ commutes with $L$. 

(2) Because of the equivalence of the norms it is sufficient to
estimate the $\Vert.\Vert_{\mu}$ norm. Let 
\begin{equation*}
\left(\begin{array}{c} \phi_i\\ \psi_i\end{array}\right)\in X_i.
\end{equation*}
We compute 

\begin{eqnarray*}
\left\Vert e^{tL}\left(\begin{array}{c} \phi_i \\ \psi_i\end{array}\right)\right\Vert_{\mu}^2
&=&\left\Vert \left(\begin{array}{cc} (\mu^2-\lambda_i^2)^{1/2}
        & 0 \\ 0 & \one \end{array}\right)M_i\left(\begin{array}{cc}
        e^{t\lambda_i} & 0 \\ 0 & e^{-\lambda_i
          t} \end{array}\right)M_i^{-1}\left(\begin{array}{c}
        \phi_i \\ \psi_i\end{array}\right)\right\Vert_{\cH\times
    \cH}\\
&\le&\Vert N_i\Vert^2_{\R^2\rightarrow \R^2}\left\Vert \left(\begin{array}{c}
        \phi_i \\ \psi_i\end{array}\right)\right\Vert^2_{\mu},
\end{eqnarray*}
where 
\begin{equation*}
N_i=\left(\begin{array}{cc} (\mu^2-\lambda_i^2)^{1/2}
        & 0 \\ 0 & \one \end{array}\right)M_i\left(\begin{array}{cc}
        e^{t\lambda_i} & 0 \\ 0 & e^{-\lambda_i
          t} \end{array}\right)M_i^{-1}\left(\begin{array}{cc} (\mu^2-\lambda_i^2)^{-1/2}
        & 0 \\ 0 &  \one \end{array}\right).
\end{equation*}
We then estimate uniformly in $i\in \cN$:
\begin{eqnarray*}
\Vert N_i\Vert^2_{\R^2\rightarrow \R^2}&\lesssim& \left\Vert
  \frac{1}{2}\left(\begin{array}{cc} e^{t\lambda_i}+e^{-t\lambda_i} & \frac{1}{i\lambda_i}(e^{-t\lambda_i}-e^{t\lambda_i}) \\
      \frac{\lambda_i}{i}(e^{t\lambda_i }-e^{-t\lambda_i}) &
      e^{t\lambda_i}+e^{-\lambda_i t} \end{array}\right)\right\Vert_2^2.\\
\end{eqnarray*}
We have for $t\ge 0$
\begin{eqnarray*}
\frac{1}{\lambda_i}(e^{t\lambda_i}-e^{-t\lambda_i})&=&2\sum_{i=1}^{\infty}\frac{(t\lambda_i)^{2i+1}}{\lambda_i(2i+1)!}\\
&\le&2t\sum_{i=1}^{\infty}\frac{(t\lambda_i)^{2i}}{(2i)!}\le
t(e^{t\lambda_i }+e^{-t\lambda_i})\le \tilde{C}_{\epsilon} e^{(\lambda_i+\epsilon) t}. 
\end{eqnarray*}
Using that $\lambda_i\le \lambda_0$ we find uniformly in $i\in \cN$:
\begin{equation*}
\Vert N_i\Vert_{\R^2\rightarrow \R^2}\lesssim
e^{(\lambda_i+\epsilon)\vert t\vert}.
\end{equation*}
(3) We consider the case $t\ge 0$. First note that 
\begin{equation*}
\Vert u\Vert_{X_\epsilon}^2=\<\cA u_0,u_0\>+\Vert u_1\Vert^2+\epsilon^2\Vert u_0\Vert^2
\end{equation*}
defines a norm on $X_{\infty}$. We estimate for $u(t)=e^{tL}u$ 
\begin{eqnarray*}
\frac{d}{dt}\Vert u\Vert_{X_\epsilon}^2&=&2{\rm
  Re}\left(\<\cA u_0,\dot{u}_0\>+\<u_1,\dot{u}_1\>+\epsilon^2\<u_0,\dot{u}_0\>\right)\\
&=&2{\rm Re}\, \epsilon^2\<u_0,i u_1\>\\
&\le&2\epsilon^2\Vert u_0\Vert\Vert u_1\Vert\le \epsilon^3\Vert
u_0\Vert^2+\epsilon\Vert u_1\Vert^2\le \epsilon \Vert u\Vert_{X_\epsilon}^2.
\end{eqnarray*}
By the Gronwall lemma we obtain:
\begin{equation*}
\Vert u(t)\Vert_{X_\epsilon}^2\le \tilde{C}_{\epsilon} e^{\epsilon t}\Vert
u\Vert^2_{X_\epsilon}. 
\end{equation*}
We now claim that on $X_{\infty}$ the $X$ and the $X_\epsilon$ norms are
equivalent. Indeed
\begin{eqnarray*}
\<\cA u_0,u_0\>+\Vert u_1\Vert^2+\epsilon^2\Vert u_0\Vert^2\lesssim \Vert u_0\Vert_{H^1}^2+\Vert u_1\Vert^2.
\end{eqnarray*}
Also,
\begin{eqnarray*}
\Vert u_0\Vert_{H^1}^2+\Vert
u_1\Vert^2&=&\<(-\partial_x^2+V)u_0,u_0\>-\<Vu_0,u_0\>+\Vert
u_0\Vert^2+\Vert u_1\Vert^2\\
&\lesssim& \<\cA u_0,u_0\>+\Vert
u_0\Vert^2+\Vert u_1\Vert^2\lesssim \Vert u\Vert_{X_\epsilon}^2.  
\end{eqnarray*}
Then we can estimate 
\begin{eqnarray*}
\Vert u(t)\Vert_X\lesssim \Vert u(t)\Vert_{X_\epsilon}\lesssim e^{\epsilon
  t}\Vert u\Vert_{X_\epsilon}\lesssim e^{\epsilon t}\Vert u\Vert_X. 
\end{eqnarray*}
\qed

Let $Y=X_0^-\oplus\left(\bigoplus_{i=1}^{N}X_i\right)\oplus X_{\infty}.$ We have $X=X^+_0\oplus Y$ and both
spaces are invariant under $e^{tL}$. 
\begin{corollary}
\label{cor1}
For all $\epsilon>0$ there exists $M_{L,\epsilon}>0$ such that for all $t\ge 0$ we have 
\begin{equation*}
\Vert e^{tL}\vert_{Y}\Vert_{X\rightarrow X}\le M_{L,\epsilon} e^{(\lambda_1+\epsilon) t}.
\end{equation*}
\end{corollary}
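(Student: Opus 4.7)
The plan is to exploit the fact that $Y$ is the direct sum of spaces on which we have already quantitatively controlled the semigroup in Proposition \ref{prop3}, together with one extra invariant piece $X_0^-$ on which the semigroup is contractive for $t\geq 0$. Concretely, given $u\in Y$, I would write
\[
u = u_0^- + \sum_{i=1}^{N} u_i + u_\infty,\quad u_0^-\in X_0^-,\ u_i\in X_i,\ u_\infty\in X_\infty,
\]
and apply $e^{tL}$ componentwise using the invariance of each factor (Proposition \ref{prop3}(1) together with the remark that $X_0^\pm$ are eigenspaces of $L$ with eigenvalue $\pm\lambda_0$).

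The three estimates I need are then the following. On $X_0^-$, the second remark gives $L u_0^- = -\lambda_0 u_0^-$, hence $e^{tL} u_0^- = e^{-\lambda_0 t} u_0^-$, so for $t\ge 0$ we have $\|e^{tL}u_0^-\|_\mu \le \|u_0^-\|_\mu$, which is trivially bounded by $e^{(\lambda_1+\epsilon)t}\|u_0^-\|_\mu$. On each $X_i$ with $1\le i\le N$, Proposition \ref{prop3}(2) yields, uniformly in $i$,
\[
\|e^{tL}u_i\|_X \le C_\epsilon e^{(\lambda_i+\epsilon)t}\|u_i\|_X \le C_\epsilon e^{(\lambda_1+\epsilon)t}\|u_i\|_X,
\]
since $\lambda_i\le \lambda_1$ for $i\ge 1$. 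On $X_\infty$, Proposition \ref{prop3}(3) gives $\|e^{tL}u_\infty\|_X\le C_\epsilon e^{\epsilon t}\|u_\infty\|_X\le C_\epsilon e^{(\lambda_1+\epsilon)t}\|u_\infty\|_X$.

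To assemble these into a single bound on $\|e^{tL}u\|_X$, I would use the orthogonality of the decomposition $X=(\bigoplus_{i\in\cN}X_i)\oplus X_\infty$ with respect to $\langle\cdot,\cdot\rangle_\mu$ (the remark after the lemma), which in particular gives orthogonality of the five summands appearing in the decomposition of $u\in Y$ (since $X_0^-\subset X_0$). This yields
\[
\|u\|_\mu^2 = \|u_0^-\|_\mu^2 + \sum_{i=1}^N\|u_i\|_\mu^2 + \|u_\infty\|_\mu^2,
\]
and the same Pythagorean identity for $e^{tL}u$, so combining the three bounds above in the $\mu$-norm (the bounds on $X_i$ and $X_\infty$ from Proposition \ref{prop3} are proved via the $\mu$-norm, so this transfer is immediate) gives
\[
\|e^{tL}u\|_\mu^2 \le C_\epsilon^2 e^{2(\lambda_1+\epsilon)t}\|u\|_\mu^2.
\]
Finally, I invoke the equivalence of $\|\cdot\|_\mu$ and $\|\cdot\|_X$ to obtain the claim with some new constant $M_{L,\epsilon}$.

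There is no genuine obstacle; the only point to watch is that the constant $C_\epsilon$ coming from Proposition \ref{prop3}(2) must be uniform in $i\in\cN$, which is visible from its proof (the estimate for $\|N_i\|$ depended only on $\lambda_i\le \lambda_0$ and an algebraic manipulation of the exponential series), and that the decaying mode $X_0^-$ is included in $Y$ precisely because we have removed only the top unstable direction $X_0^+$, so no factor $e^{\lambda_0 t}$ is ever produced.
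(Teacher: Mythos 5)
Your proof is correct and follows essentially the same route as the paper: decompose $Y$ into $X_0^-\oplus(\bigoplus_{i=1}^N X_i)\oplus X_\infty$, use the eigenvalue relation on $X_0^-$, Proposition \ref{prop3} on the $X_i$ and $X_\infty$, assemble via the $\langle\cdot,\cdot\rangle_\mu$-orthogonality of the invariant summands, and conclude by equivalence of the norms $\Vert\cdot\Vert_\mu$ and $\Vert\cdot\Vert_X$. Your added remarks on the uniformity of $C_\epsilon$ in $i$ and on why no $e^{\lambda_0 t}$ growth can appear are consistent with, and slightly more explicit than, the paper's argument.
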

\proof 

Because of the equivalence of the norms $\Vert.\Vert_{X}$ and
$\Vert.\Vert_{\mu}\, (\mu^2>\Vert V_-\Vert_{\infty})$ it is
sufficient to show the estimate with respect to the norm $\Vert.\Vert_{\mu}$. We
choose $\epsilon<\lambda_1$ and apply Proposition \ref{prop3}. Let
\begin{equation*}
\phi=\phi_0^-+\sum_{i=1}^{N} \phi_i+\phi_{\infty}
\end{equation*}
with $\phi_0^-\in X_0^-,\, \phi_i\in X_i,\, \phi_{\infty}\in X_{\infty}$. We
have 
\begin{eqnarray*}
\Vert e^{tL}\phi\Vert_{\mu}^2&=&e^{-\lambda_0 t}\Vert \phi_0^-\Vert^2_{\mu}+\sum_{i=1}^{N}\Vert
e^{tL}\phi_i\Vert^2_{\mu}+\Vert
\phi_{\infty}\Vert^2_{\mu}\\
&\lesssim& e^{2(\lambda_1+\epsilon) t }(\Vert
\phi_0^-\Vert^2_{\mu}+\sum_{i=0}^{N} \Vert
\phi_i\Vert^2_{\mu}+\Vert
\phi_{\infty}\Vert^2_{\mu})=e^{2(\lambda_1+\epsilon) t}\Vert
  \phi\Vert^2_{\mu}. 
\end{eqnarray*} 
\qed

Let
\begin{equation*}
E_0=\one_{\{-\lambda_0^2\}}(\cA)M_0P_+M_0^{-1},\, E_1=\one-E_0.
\end{equation*}
We easily check that 
\begin{equation*}
\forall \psi \in X,\, E_0\psi\in X_0^+;\quad \forall \psi\in X,\,
E_1\psi\in Y;\quad E_0+E_1=\one. 
\end{equation*}
\subsubsection{The nonlinear equation}
The nonlinear equation writes now as a first order equation 
\begin{equation}
\label{abstrequ}
\left\{\begin{array}{rcl} \partial_t\psi&=&L\psi+G(\psi),\\
\psi(0)&=&\psi_0 \end{array}\right.
\end{equation}
with 
\begin{equation*}
G(\psi)=\left(\begin{array}{c} 0 \\ F(P_{+}(\psi)) \end{array}\right). 
\end{equation*}
From hypothesis \eqref{HF} we directly obtain
\begin{equation}
\label{LipschitzG}
\Vert G(\psi)-G(\phi)\Vert_{X}\le M_F (\Vert \psi\Vert_X+\Vert
\phi\Vert_X)\Vert \psi-\phi\Vert_X
\end{equation}
for $\Vert \psi\Vert_X\le 1,\, \Vert \phi\Vert_X\le 1$.  The
abstract theorem then writes 
\begin{theorem}
The zero solution of \eqref{abstrequ} is unstable. More precisely
there exists $\epsilon_0>0$ and a sequence $\psi_0^m$ with $\Vert
\psi_0^m\Vert_X\rightarrow 0,\, m\rightarrow \infty$, but for all $m$ 
\begin{equation*}
\sup_{t\ge 0}\Vert\psi^m(t)\Vert_X\ge \epsilon_0>0.
\end{equation*}
Here $\psi^m(t)$ is the solution of \eqref{abstrequ} with initial data
$\psi_0^m$ and the supremum is taken over the maximal interval of
existence of $\psi^m$. 
\end{theorem}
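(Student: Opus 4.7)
The plan is to exploit the unstable mode: $L$ has eigenvalue $\lambda_0>0$ on the one-dimensional space $X_0^+$, on which $e^{tL}$ acts as $e^{\lambda_0 t}\,\mathrm{Id}$, while on the invariant complement $Y=X_0^-\oplus\bigoplus_{i=1}^{N}X_i\oplus X_{\infty}$ the semigroup satisfies $\|e^{tL}|_Y\|_{X\to X}\le M_{L,\epsilon}e^{(\lambda_1+\epsilon)t}$ (Corollary \ref{cor1}). Pick a unit vector $\phi^+\in X_0^+$, set $\delta_m:=1/m$, and let $\psi_0^m:=\delta_m\phi^+$. Local well-posedness of \eqref{abstrequ} in $X$ follows from a standard Duhamel/fixed-point argument based on \eqref{LipschitzG}. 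The decomposition $X=X_0^+\oplus Y$ induces $\psi^m(t)=f_m(t)+g_m(t)$, and since the $L$-invariant projectors $E_0,E_1$ commute with $e^{tL}$, Duhamel yields
\begin{equation*}
f_m(t) = \delta_m e^{\lambda_0 t}\phi^+ + \int_0^t e^{\lambda_0(t-s)}E_0G(\psi^m(s))\,ds,\qquad g_m(t) = \int_0^t e^{(t-s)L}E_1G(\psi^m(s))\,ds.
\end{equation*}
The zero function solving \eqref{AWE} forces $F(0)=0$, and \eqref{LipschitzG} with $\phi=0$ then gives the \emph{quadratic} bound $\|G(\psi)\|_X\le M_F\|\psi\|_X^2$ for $\|\psi\|_X\le 1$.

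The heart of the proof is a bootstrap on the time interval where $\|\psi^m(t)\|_X\le 2\delta_m e^{\lambda_0 t}$ and $\delta_m e^{\lambda_0 t}\le\alpha$, for a small $\alpha\in(0,1/2]$ fixed below. Choose $\epsilon\in(0,\lambda_0-\lambda_1)$ so that $2\lambda_0>\lambda_1+\epsilon$. Inserting the bootstrap hypothesis, we estimate $\|G(\psi^m(s))\|_X\le 4M_F\delta_m^2 e^{2\lambda_0 s}$, and elementary computation of $\int_0^t e^{\lambda_0(t-s)}e^{2\lambda_0 s}\,ds$ and $\int_0^t e^{(\lambda_1+\epsilon)(t-s)}e^{2\lambda_0 s}\,ds$ together with the semigroup bounds yields
\begin{equation*}
\|f_m(t)-\delta_m e^{\lambda_0 t}\phi^+\|_X\le C_1\delta_m^2 e^{2\lambda_0 t},\qquad \|g_m(t)\|_X\le C_2\delta_m^2 e^{2\lambda_0 t},
\end{equation*}
with $C_1,C_2$ depending only on $\lambda_0,\lambda_1,\epsilon,M_F,M_{L,\epsilon}$. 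Since $\delta_m^2 e^{2\lambda_0 t}=(\delta_m e^{\lambda_0 t})^2\le\alpha\cdot\delta_m e^{\lambda_0 t}$, taking $\alpha\le(4(C_1+C_2))^{-1}$ produces the strictly improved bounds $\|\psi^m(t)\|_X\le \tfrac{5}{4}\delta_m e^{\lambda_0 t}$ and $\|\psi^m(t)\|_X\ge\|f_m(t)\|_X\ge\tfrac{3}{4}\delta_m e^{\lambda_0 t}$. By continuity the bootstrap extends to the entire interval $[0,T_m^*]$ with $T_m^*:=\lambda_0^{-1}\log(\alpha/\delta_m)$, the time at which $\delta_m e^{\lambda_0 T_m^*}=\alpha$.

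Evaluating at $t=T_m^*$ gives $\|\psi^m(T_m^*)\|_X\ge 3\alpha/4$, so $\sup_{t\ge 0}\|\psi^m(t)\|_X\ge 3\alpha/4$ for every $m$. Setting $\epsilon_0:=3\alpha/4$, a constant independent of $m$, concludes the instability statement.

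The main obstacle is the book-keeping of the nonlinear error over the long time $T_m^*\sim\log(1/\delta_m)$. A merely Lipschitz estimate $\|G(\psi)\|_X\le M_F\epsilon_0\|\psi\|_X$ would produce on the unstable side the integral $\int_0^t e^{\lambda_0(t-s)}\delta_m e^{\lambda_0 s}\,ds=\delta_m t\, e^{\lambda_0 t}$, whose extra factor $t$ prevents the bootstrap from closing over a logarithmic time span. The genuinely quadratic bound $\|G(\psi)\|_X\le M_F\|\psi\|_X^2$ is essential: it upgrades the integrand from growth $\delta_m e^{\lambda_0 s}$ to $\delta_m^2 e^{2\lambda_0 s}$, so that both Duhamel integrals are of order $\delta_m^2 e^{2\lambda_0 t}$ and can be absorbed by the leading term $\delta_m e^{\lambda_0 t}$ via the smallness of $\delta_m e^{\lambda_0 t}\le\alpha$.
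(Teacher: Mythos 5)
Your proof is correct, but it takes a genuinely different route from the paper. The paper runs a Lyapunov--Perron-type construction: it prescribes the unstable component $\psi_0\in X_0^+$ at the \emph{final} time $\tau$ and solves the integral equation \eqref{intequ} -- whose $X_0^+$-part is integrated backward from $\tau$ and whose $Y$-part from $-\infty$ -- by a contraction argument in the weighted space $Z$ with norm $\sup_{0\le t\le\tau}\Vert e^{-\beta(t-\tau)}\psi(t)\Vert_X$, where $\lambda_0>2\beta>\tilde\lambda_1$. Smallness of the data, $\Vert\psi(0,\tau)\Vert_X\le\rho e^{-\beta\tau}$, then falls out of the exponential weight, and the lower bound $\Vert\psi(\tau)\Vert_X\ge\rho/6$ comes from the structure of the fixed point; no continuation argument is needed because the trajectory is produced on all of $[0,\tau]$ at once. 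You instead run a forward-in-time Duhamel plus bootstrap ("hair-trigger") argument: data of size $\delta_m$ placed exactly on the unstable eigendirection, the quadratic bound $\Vert G(\psi)\Vert_X\le M_F\Vert\psi\Vert_X^2$ (which the paper also uses, equally implicitly relying on $G(0)=0$), and continuous induction up to the escape time $T_m^*=\lambda_0^{-1}\log(\alpha/\delta_m)$. Both proofs rest on the same infrastructure -- the splitting $X=X_0^+\oplus Y$, the projectors $E_0,E_1$, and the semigroup bounds of Proposition \ref{prop3} and Corollary \ref{cor1}. Your version is more elementary (no fixed point on a trajectory space), exhibits the mechanism of instability concretely as exponential growth along the ground-state eigenfunction, and closes under the weaker condition $\lambda_1+\epsilon<2\lambda_0$, whereas the paper needs $\lambda_1+\epsilon<2\beta<\lambda_0$; the paper's version, on the other hand, avoids the bootstrap/continuation bookkeeping entirely and is the standard gateway to unstable-manifold constructions.

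Two steps should be tightened. First, the inequality $\Vert\psi^m(t)\Vert_X\ge\Vert f_m(t)\Vert_X$ is not automatic: in a Banach space the projection $E_0$ onto $X_0^+$ along $Y$ need not be a contraction. This costs nothing, since your two established error bounds give, by the triangle inequality,
\begin{equation*}
\Vert\psi^m(t)\Vert_X\ \ge\ \delta_m e^{\lambda_0 t}-\Vert f_m(t)-\delta_m e^{\lambda_0 t}\phi^+\Vert_X-\Vert g_m(t)\Vert_X\ \ge\ \bigl(1-(C_1+C_2)\alpha\bigr)\delta_m e^{\lambda_0 t}\ \ge\ \tfrac34\,\delta_m e^{\lambda_0 t},
\end{equation*}
with the same choice of $\alpha$, so $\epsilon_0=3\alpha/4$ stands. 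Second, the phrase "by continuity the bootstrap extends" deserves one more sentence: since the bootstrap keeps $\Vert\psi^m(t)\Vert_X\le 2\alpha\le 1/2<1$, the solution stays in the region where \eqref{LipschitzG} applies, and the standard blow-up criterion for locally Lipschitz nonlinearities guarantees existence up to $T_m^*$, while the strict improvement from $2\delta_m e^{\lambda_0 t}$ to $\tfrac54\delta_m e^{\lambda_0 t}$ makes the bootstrap set open and closed in $[0,T_m^*]$.
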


\subsection{Proof of the abstract theorem}

We note $L_0$ the restriction of $L$ to $X^+_0$ and $L_1$ the
restriction of $L$ to $Y$. For $\psi_0\in X^+_0$ with small norm we consider for a certain parameter $\tau>0$ the integral equation 
\begin{equation}
\label{intequ}
\psi(t)=e^{L_0(t-\tau)}\psi_0+\int_{\tau}^te^{L_0(t-s)}E_0G(\psi)ds+\int_{-\infty}^te^{L_1(t-s)}E_1G(\psi)ds=:\I(\psi).
\end{equation}
We fix $\epsilon>0$ in Corollary \ref{cor1} small enough such that
$\tilde{\lambda}_1:=\lambda_1+\epsilon<\lambda_0$. We will drop in the
following the index $\epsilon$ ($M_L=M_{L,\epsilon}$). We fix $\beta>0$ such that $\lambda_0>2\beta>\tilde{\lambda}_1$. Let 
\begin{equation*}
Z=\{\psi\in C([0,\tau];X);\, \Vert\psi\Vert_X\le e^{\beta(t-\tau)}\rho\}.
\end{equation*}
We equip $Z$ with the norm
\begin{equation*}
\Vert \psi\Vert_Z=\sup_{0\le t\le\tau}\Vert e^{-\beta(t-\tau)}\psi(t)\Vert_X.
\end{equation*}
Let $\psi_0$ such that $\Vert\psi_0\Vert_X=\frac{\rho}{3}$. We
claim that for $\rho$ small enough 
\begin{equation*}
\I:\bar{B}_Z(0,\rho)\rightarrow \bar{B}_Z(0,\rho)
\end{equation*} 
and that it is a contraction on that space. First note that 
\begin{equation*}
\I(\psi)=\I_0(\psi)+\I_1(\psi)+\I_2(\psi)
\end{equation*}
with
\begin{eqnarray*}
\I_0(\psi)&=&e^{L_0(t-\tau)}\psi_0,\\
\I_1(\psi)&=&-\int_{t}^{\tau}e^{L_0(u-\tau)}E_0G(\psi(t+\tau-u)) du,\\
\I_2(\psi)&=&\int_{-\infty}^te^{L_1(t-s)}E_1G(\psi(s))ds. 
\end{eqnarray*}
We first estimate for $t\leq \tau$
\begin{equation*}
\Vert \I_0(\psi)\Vert_X= e^{\lambda_0(t-\tau)}\Vert \psi_0\Vert_X\le
1/3 e^{\beta(t-\tau)}\rho. 
\end{equation*}
We then estimate for $\psi\in \bar{B}_Z(0,\rho)$
\begin{eqnarray*}
\Vert\I_1(\psi)\Vert_X&\le&M_F\Vert
E_0\Vert\int_t^{\tau}e^{\lambda_0(u-\tau)}\Vert\psi\Vert_X^2(t+\tau-u)du\\
&\le&M_F\Vert
E_0\Vert\int_t^{\tau}e^{\lambda_0(u-\tau)}\rho^2e^{2\beta(t-u)}du\\
&\le&M_F\Vert E_0\Vert e^{2\beta
  t}e^{-\lambda_0\tau}\rho^2\int_t^{\tau}e^{(\lambda_0-2\beta)u}du\\
&\le&M_F\Vert E_0\Vert\rho^2e^{2\beta
  t}e^{-\lambda_0\tau}\frac{1}{\lambda_0-2\beta}e^{(\lambda_0-2\beta)\tau}\\
&=&\frac{M_F\Vert E_0\Vert\rho^2}{\lambda_0-2\beta}
e^{2\beta(t-\tau)}\\
&\le&\frac{M_F\Vert E_0\Vert\rho^2}{\lambda_0-2\beta}
e^{\beta(t-\tau)}\le 1/3\rho e^{\beta(t-\tau)}
\end{eqnarray*}
for $\rho$ small enough.  We then estimate for $\psi\in
\bar{B}_Z(0,\rho)$ :
\begin{eqnarray*}
\Vert \I_2(\psi(t))\Vert_X&\le&M_LM_F\Vert
E_1\Vert\int_{-\infty}^te^{\tilde{\lambda}_1(t-s)}\rho^2e^{2\beta(s-\tau)}ds\\
&\le&\frac{M_LM_F\Vert E_1\Vert\rho^2}{2\beta-\tilde{\lambda}_1}e^{\tilde{\lambda}_1
  t}e^{-2\beta\tau}e^{(2\beta-\tilde{\lambda}_1)t}\\
&=&\frac{M_LM_F\Vert
  E_1\Vert\rho^2}{2\beta-\tilde{\lambda}_1}e^{2\beta(t-\tau)}\le 1/3\rho^{\beta(t-\tau)}
\end{eqnarray*}
for $\rho$ small enough. We have just proven $\I (\psi) \in \bar{B}_Z(0,\rho)$.
Let us now show that $\I$ is a contraction. We estimate 
\begin{eqnarray*}
\Vert \I_1(\psi)-\I_1(\phi)\Vert_X&\le&2M_F\Vert
E_0\Vert\int_t^{\tau}e^{\lambda_0(u-\tau)}\rho e^{\beta(t-u)}\Vert\psi-\phi\Vert_X(t+\tau-u)du\\
&\le&2M_F\Vert
E_0\Vert\rho\Vert\psi-\phi\Vert_Z\int_t^{\tau}e^{\lambda_0(u-\tau)}e^{2\beta(t-u)}du\\
&=&2M_F\Vert E_0\Vert\rho\Vert\psi-\phi\Vert_Ze^{2\beta
  t}e^{-\lambda_0\tau}\int_t^{\tau}e^{(\lambda_0-2\beta)u}du\\
&\le&\frac{2M_F\Vert
 E_0\Vert\rho}{\lambda_0-2\beta}e^{2\beta(t-\tau)}\le 1/4 e^{\beta(t-\tau)}
\end{eqnarray*}
for $\rho$ sufficiently small. We then estimate 
\begin{eqnarray*}
\Vert \I_2(\psi)-\I_2(\phi)\Vert_X&\le& \int_{-\infty}^t2M_LM_F\Vert
E_1\Vert \rho e^{\tilde{\lambda}_1(t-s)}e^{\beta(s-\tau)}\Vert \psi-\phi\Vert_X ds\\
&\le&2M_LM_F\Vert E_1\Vert\rho\Vert
\psi-\phi\Vert_Z\int_{-\infty}^te^{\tilde{\lambda}_1(t-s)}e^{2\beta(s-\tau)}ds\\
&=&2M_LM_F\Vert E_1\Vert\rho\Vert
\psi-\phi\Vert_Ze^{\tilde{\lambda}_1t}e^{-2\beta
  \tau}\int_{-\infty}^te^{(2\beta-\tilde{\lambda}_1)s}ds\\
&\le&\frac{2M_LM_F\Vert
  E_1\Vert\rho}{2\beta-\tilde{\lambda}_1}e^{2\beta(t-\tau)}\le 1/4e^{\beta(t-\tau)}
\end{eqnarray*}
for $\rho$ sufficiently small. 

It follows that for $\rho$ sufficiently
small there exists a solution of \eqref{intequ} in
$\bar{B}_Z(0,\rho)$. We note this solution $\psi(t,\tau)$. We easily
check that $\psi(t,\tau)$ is also solution of \eqref{abstrequ} with
initial data satisfying
\begin{equation*}
\Vert \psi(0,\tau )\Vert_X\le \rho e^{-\beta\tau}\rightarrow 0,\tau
\rightarrow \infty. 
\end{equation*}
We also estimate 
\begin{eqnarray*}
\Vert \psi(\tau)\Vert_X&\ge&\Vert \psi_0\Vert_X-M_LM_F\Vert
E_1\Vert\int_{-\infty}^{\tau}e^{\tilde{\lambda}_1(\tau-s)}\rho^2e^{2\beta(s-\tau)}ds\\
&=&\rho/3-M_LM_F\Vert
E_1\Vert\rho^2e^{(\tilde{\lambda}_1-2\beta)\tau}\int_{-\infty}^{\tau}e^{(2\beta-\tilde{\lambda}_1)s}ds\\
&\ge&\rho/3-\frac{M_LM_F\Vert E_1\Vert\rho^2}{2\beta-\tilde{\lambda}_1}\ge\rho/6
\end{eqnarray*}
for $\rho$ small enough. It follows that $\psi^m(t)=\psi(t,m)$ does
the job. 
\qed
\begin{remark}
The theorem is close to \cite[Theorem VII.2.3]{DaKr} and \cite[Theorem 5.1.3]{He}.
However both theorems do not apply directly. Whereas \cite[Theorem
VII.2.3]{DaKr} is restricted to bounded operators, \cite[Theorem
5.1.3]{He} only applies if the linear part is sectorial.
\end{remark}
\section{Application of the abstract result to the Yang-Mills
  equation}
\label{Sec3}
First note that if $W(t,r)$ is solution of the Yang-Mills equation
\eqref{YMSW} (written in the $r$ variable), then $ W(2m t,
  2m r)$ is solution of the
same equation with $m=1/2$ and vice versa. We can therefore suppose in
the following $m=1/2$. 
We linearize around $W=W_n$ and obtain for $v=W-W_n$:
\begin{equation*}
\ddot{v}-v''+P(3W_n^2-1)v+Pv^2(v+3W_n)=0. 
\end{equation*}
The linear operator 
\begin{equation*}
\cA_n=-\partial_x^2+P(3W_n^2-1)
\end{equation*}
depends on the stationary solution which we don't
know explicitly. We put 
\begin{equation*}
V_n=P(3W_n^2-1).
\end{equation*}
We first want to apply our abstract result on $X=H^1\times L^2$. It is
easy to see that the nonlinear part fulfills the hypotheses of the
abstract theorem. Indeed we have 
\begin{proposition}
\label{propnonllip}
We have for $\Vert v\Vert_{H^1}\le 1,\, \Vert u\Vert_{H^1}\le 1$:
\begin{equation*}
\Vert F(v)-F(u)\Vert_{L^2}\lesssim(\Vert v\Vert_{H^1}+\Vert u\Vert_{H^1})\Vert u-v\Vert_{H^1}.
\end{equation*}
\end{proposition}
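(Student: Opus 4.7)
The nonlinearity in $F$ is $F(v)=-Pv^{2}(v+3W_{n})=-Pv^{3}-3PW_{n}v^{2}$, so after factoring
\begin{equation*}
F(v)-F(u)=-P(v-u)(v^{2}+uv+u^{2})-3PW_{n}(v-u)(v+u).
\end{equation*}
The plan is to reduce everything to $L^{\infty}\times L^{2}$ products via Hölder, estimate $P$ and $W_{n}$ in $L^{\infty}$, and use the one-dimensional Sobolev embedding $H^{1}(\R)\hookrightarrow L^{\infty}(\R)$ to control the $v$, $u$ factors.

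First I would verify that $P$ and $PW_{n}$ are in $L^{\infty}(\R_{x})$: since we rescaled to $m=1/2$, we have $P(r)=(1-1/r)/r^{2}$ with $r>1$, which is smooth and bounded (vanishing at the horizon and at spatial infinity); and $W_{n}$ is a bounded stationary solution with $|W_{n}|\le 1$ by Theorem \ref{thstat}. Thus $\|P\|_{\infty}+\|PW_{n}\|_{\infty}\lesssim 1$ uniformly in $n$.

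Next, I apply Hölder's inequality in the form $\|fg\|_{L^{2}}\le\|f\|_{L^{\infty}}\|g\|_{L^{2}}$ to each piece, placing the factor $v-u$ in $L^{2}$ and the remaining polynomial in $v,u$ in $L^{\infty}$:
\begin{equation*}
\|F(v)-F(u)\|_{L^{2}}\lesssim\bigl(\|v\|_{L^{\infty}}^{2}+\|u\|_{L^{\infty}}\|v\|_{L^{\infty}}+\|u\|_{L^{\infty}}^{2}+\|v\|_{L^{\infty}}+\|u\|_{L^{\infty}}\bigr)\|v-u\|_{L^{2}}.
\end{equation*}
By Sobolev embedding, $\|\cdot\|_{L^{\infty}}\lesssim\|\cdot\|_{H^{1}}$. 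Using the assumed bounds $\|u\|_{H^{1}},\|v\|_{H^{1}}\le 1$, the quadratic terms are controlled by the linear ones:
\begin{equation*}
\|v\|_{H^{1}}^{2}+\|u\|_{H^{1}}\|v\|_{H^{1}}+\|u\|_{H^{1}}^{2}\lesssim\|v\|_{H^{1}}+\|u\|_{H^{1}},
\end{equation*}
and this yields the claimed Lipschitz-type bound.

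There is essentially no obstacle here: the only point worth checking carefully is the uniform boundedness of $P$ on all of $\R_{x}$ (both ends), which is immediate from the explicit formula, together with the fact that $W_{n}$ is globally bounded by Theorem \ref{thstat}. Everything else is Hölder plus the one-dimensional Sobolev embedding.
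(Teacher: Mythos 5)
Your proof is correct, and it follows the same skeleton as the paper's: the identical factorization of $F(v)-F(u)$, then H\"older, then an embedding, then absorption of the quadratic terms using $\Vert u\Vert_{H^1},\Vert v\Vert_{H^1}\le 1$. The one genuine difference is in how H\"older is applied. You place every polynomial factor in $L^\infty$ and keep $u-v$ in $L^2$, so the only functional inequality you need is the one-dimensional embedding $H^1(\R)\hookrightarrow L^\infty(\R)$. The paper instead splits the terms: it estimates the square terms as $(\Vert v^2\Vert_{L^2}+\Vert u^2\Vert_{L^2})\Vert u-v\Vert_{L^\infty}$, i.e. it puts $u-v$ in $L^\infty$ against the $L^4$ norms of $u$ and $v$, and therefore also invokes the Gagliardo--Nirenberg inequality to bound $\Vert u\Vert_{L^4},\Vert v\Vert_{L^4}$ by $H^1$ norms, keeping only the mixed and $W_n$-linear terms in the $L^\infty\times L^2$ configuration. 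Both placements give the same final bound; yours is slightly more economical (one embedding instead of two inequalities), while the paper's $L^4$-based splitting is the natural one in a context where $u,v$ are controlled only in $L^4$-type norms --- a structure that matters elsewhere in the paper, where the energy space is built on $L^4_P$, but is not needed for this proposition. Your preliminary observations ($0<P\le 1$ after the rescaling to $m=1/2$, and $\vert W_n\vert\le 1$ uniformly in $n$, hence $P,PW_n\in L^\infty$) are exactly the facts the paper uses implicitly as well.
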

\proof

We compute
\begin{equation*}
F(v)-F(u)=P(v^2+u^2+uv+3(W_nv+W_nu))(u-v).
\end{equation*}
Thus
\begin{eqnarray*}
\Vert F(v)-F(u)\Vert_{L^2}&\lesssim&(\Vert v^2\Vert_{L^2}+\Vert
u^2\Vert_{L^2})\Vert
u-v\Vert_{L^{\infty}}+(\Vert v\Vert_{L^{\infty}}+\Vert v\Vert_{L^{\infty}}\Vert
u\Vert_{L^{\infty}}+\Vert
u\Vert_{L^{\infty}})\Vert u-v\Vert_{L^2}\\
&\lesssim&(\Vert v\Vert^2_{L^4}+\Vert u\Vert_{L^4}^2)\Vert u-v\Vert_{H^1}+(\Vert
v\Vert_{H^1}+\Vert
v\Vert_{H^1}\Vert u\Vert_{H^1}+\Vert u\Vert_{H^1})\Vert v-u\Vert_{H^1}\\
&\lesssim&(\Vert v\Vert^2_{H^1}+\Vert
u\Vert_{H^1}^2+\Vert v\Vert_{H^1}+\Vert u\Vert_{H^1})\Vert
u-v\Vert_{H^1}\\
&\lesssim& (\Vert v\Vert_{H^1}+\Vert u\Vert_{H^1})\Vert
u-v\Vert_{H^1}
\end{eqnarray*}
for $\vert u\Vert_{H^1}\le 1,\, \Vert v\Vert_{H^1}\le 1$. Here we have used the Gagliardo Nirenberg inequality and the Sobolev
embedding $H^1\hookrightarrow L^{\infty}$. 
\qed

In the next subsection we will show that 
\begin{equation*}
\int _{\R}V_n(x)dx<0.
\end{equation*}
\subsection{Study of the potential $V_n$}
Going back to the $r$ variable we see that the potential $W_n$
fulfills the following equation 
\begin{equation}
\label{ym}\left(1-\frac{1}{r}\right)\p_r^2W_n+ \frac{1}{r^2}\p_rW_n+ \frac{1}{r^2}W_n(1-W_n^2)=0
\end{equation}	
with initial data (or boundary condition) $W_n(1)=a_n$, for $0<a_n\leq \frac{1+\sqrt{3}}{5+3\sqrt{3}}$. We also have $\lim_{r\rightarrow \infty}W_n(r)=(-1)^n$. We will drop the index $n$ in the rest of this subsection. 

\subsubsection{A bound on $W$}

\begin{lemma}
We have $-a\leq W\leq a$ for $1\leq r\leq 3$.
\end{lemma}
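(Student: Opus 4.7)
The plan is to use an energy functional tailored to the ODE \eqref{ym}. After multiplying \eqref{ym} by $r^2$ to rewrite it as $r(r-1)\partial_r^2W + \partial_rW + U'(W) = 0$, where $U(W) := W^2/2 - W^4/4$ and $U'(W)=W(1-W^2)$, I will introduce
$$\Phi(r) := r(r-1)(\partial_rW)^2 + 2U(W).$$
Substituting $r(r-1)\partial_r^2W = -\partial_rW-U'(W)$ and using $U'(W)\partial_rW=\partial_rU(W)$, the cross terms cancel to give
$$\Phi'(r) = (2r-3)(\partial_rW)^2.$$
Since $r(r-1)$ vanishes at $r=1$ while $W(1)=a$, one has $\Phi(1)=2U(a)$.

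On $[1,3/2]$, the factor $(2r-3)$ is nonpositive, so $\Phi$ is nonincreasing and
$$2U(W(r)) \;\le\; \Phi(r) \;\le\; \Phi(1) = 2U(a).$$
Since $U$ is even and strictly increasing on $[0,1]$, and $|W|\le 1$ by Theorem~\ref{thstat}, this gives $|W(r)|\le a$ for all $r \in [1,3/2]$. As a byproduct, dropping the nonnegative kinetic part yields the quantitative control $(\partial_rW(3/2))^2 \le \tfrac{8}{3}(U(a)-U(W(3/2)))$ at $r=3/2$.

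To extend the bound to $[3/2,3]$, where $\Phi$ ceases to be monotone, the plan is to combine this initial estimate with the first integral of the ODE. Integrating $\partial_r[(1-1/r)\partial_rW] = -U'(W)/r^2$ from $1$ to $r$ and using the vanishing of $(1-1/r)\partial_rW$ at $r=1$ yields
$$\Bigl(1-\tfrac{1}{r}\Bigr)\partial_rW(r) = -\int_1^r \frac{U'(W(s))}{s^2}\,ds.$$
As long as $|W|\le a$ on $[1,r]$, one has $|U'(W)| \le a$, which produces the uniform derivative bound $|\partial_rW(r)|\le a$ on the same subinterval. Arguing by contradiction, at the first $r_0 \in (3/2,3]$ where $W$ would leave $[-a,a]$, the sign analysis at critical points of the ODE (local minima of $W$ force $W<0$, local maxima force $W>0$) forces $W(r_0) = -a$ and $\partial_rW(r_0)<0$. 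Inserting the refined derivative bound on $[3/2,r_0]$ into the first-integral identity then yields a contradiction, crucially invoking the smallness assumption $a \le a_1 = (1+\sqrt{3})/(5+3\sqrt{3})$.

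The principal obstacle is precisely this second step: the passage across the photon sphere $r=3/2$. The monotone energy $\Phi$ delivers the bound on $[1,3/2]$ essentially for free, but beyond this point the control must be propagated by combining the derivative estimate from the first integral with the sharp numerical value of $a_1$; a purely energetic argument on $[1,3]$ is insufficient, reflecting the fact that for $a$ larger than $a_1$ the bound is not expected to persist.
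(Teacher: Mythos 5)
Your step 1 (the energy $\Phi=r(r-1)(W')^2+2U(W)$ with $\Phi'=(2r-3)(W')^2\le 0$ on $[1,3/2]$) is correct and is a genuinely different, rather elegant way to get the bound up to $r=3/2$ (though citing Theorem~\ref{thstat} for $|W|\le 1$ is circular, since this lemma is part of that theorem's proof; the bound $|W|\le 1$ comes instead from restricting to the interval $[1,r_a)$ on which the solution is considered). The genuine gap is step 2, which is the only part covering $[3/2,3]$ and is never actually carried out. First, the ``sign analysis'' (Lemma~\ref{max}) alone does not force the first exit from $[-a,a]$ to occur through $-a$: a priori $W$ could descend to a negative local minimum (which Lemma~\ref{max} permits) and climb back out through $+a$; excluding this requires a quantitative argument, namely that such an excursion has total variation $>2a$ while $|W'|\le a$, hence needs $r$-time $>2$. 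Second, and more seriously, no contradiction follows from inserting the bound $|W'|\le a$ into the first-integral identity
\begin{equation*}
\Bigl(1-\tfrac{1}{r_0}\Bigr)W'(r_0)=-\int_1^{r_0}\frac{W(1-W^2)}{s^2}\,ds,
\end{equation*}
because the integrand changes sign with $W$ and the identity is perfectly consistent with $W(r_0)=-a$, $W'(r_0)\le 0$; all this identity yields is the speed limit $|W'|\le a$ again. The contradiction you assert is never exhibited, so the lemma remains unproved on $[3/2,3]$.

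The missing idea — and what the paper actually does — is a pure timing argument obtained by integrating the speed limit, not the first integral: as long as $|W|\le a$, Lemma~\ref{borne} gives $|W'|\le a$; since $W(1)=a$, $W'(1)=-a(1-a^2)<0$, and $W$ cannot turn around while positive (Lemma~\ref{max}), $W$ decreases until its first zero $r_1$, and travelling the distance $a$ at speed $\le a$ forces $r_1\ge 2$; then on $[r_1,r_1+1]\supseteq[r_1,3]$ the same speed limit gives $|W(r)|\le a(r-r_1)\le a$. Equivalently, any exit through $-a$ at $r_0$ would require $2a=|W(r_0)-W(1)|\le a(r_0-1)$, i.e.\ $r_0\ge 3$. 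In particular your claim that the smallness $a\le a_1=(1+\sqrt{3})/(5+3\sqrt{3})$ is ``crucially'' needed, and your closing assertion that the bound should fail for $a>a_1$, are both incorrect: the lemma holds for every $0<a<1$, and $a_1$ plays no role in it (it is merely the upper bound on the values $a_n$, coming from the explicit solution $W_1$). Note also that once one has the speed-limit argument, your energy step on $[1,3/2]$ becomes superfluous: the first-integral bound you invoke in step 2 already handles all of $[1,3]$.
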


\begin{proof}
Since the initial data for $W$ are $W(1)=a$ and $W'(1)=-a(1-a^2)<0$, there exists $r_0>1$ such that for $1\leq r \leq r_0$ we have
$$-a\leq W(r)\leq a$$
Then Lemma \ref{borne} implies that on this interval we have
$$-a \leq \p_rW(r)\leq a.$$ 
$W$ is initially decreasing and can not have a local minimum in the
region $W>0$ (this is a consequence of the maximum principle, see Lemma \ref{max}). Consequently there exists $r_1>1$ such that $0\leq W\leq
a$ on $[1,r_1]$ and $W(r_1)=0$. Because of the bound of
  the derivative we have $r_1\ge 2$. By the same bound  we have $-a\leq W\leq a$ on $[r_1,r_1+1]$.
\end{proof}

Let $Q(r)=1-\frac{1}{r}-\frac{1}{2r^2}$.
\begin{proposition}\label{enc}
We have for $r\geq 3$
$$-Q(r)\leq W(r)\leq Q(r)$$
\end{proposition}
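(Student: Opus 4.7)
My plan is to use a maximum-principle / comparison argument with $Q$ as an explicit strict super-solution of the stationary equation \eqref{ym}, so that the bound follows from the sign of the linearized comparison at an interior maximum. Writing $L[u] := (1-\tfrac{1}{r}) u'' + \tfrac{1}{r^2} u' + \tfrac{1}{r^2} u(1-u^2)$, so that $L[W] = 0$ by \eqref{ym}, the first and main step is to establish $L[Q](r) < 0$ for $r \geq 3$. Direct calculation gives
\begin{equation*}
L[Q](r) = \frac{-16 r^4 + 16 r^3 + 6 r^2 + 6r + 1}{8 r^8}.
\end{equation*}
To show the numerator $N(r)$ is negative on $[3,\infty)$, I would check: $N(3) = -791 < 0$; $N'(3) = -1254 < 0$; and $N''(r) = -192 r^2 + 96 r + 12$ has its positive root at $(1+\sqrt{2})/4 < 3$ and is a downward parabola, so $N'' < 0$ on $[3,\infty)$. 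Hence $N'$ is decreasing on $[3,\infty)$ with $N'(3) < 0$, so $N$ is decreasing on $[3,\infty)$ with $N(3) < 0$, giving $N < 0$ on $[3,\infty)$ as needed.

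With $L[Q] < 0$ secured, I would prove the upper bound $W \leq Q$ by contradiction. Let $\phi := W - Q$. The previous lemma gives $|W(3)| \leq a$, and since $a \leq a_1 = 2 - \sqrt{3} < 11/18 = Q(3)$, this yields $\phi(3) < 0$. Moreover $\phi(r) \to (-1)^n - 1 \in \{-2, 0\}$ as $r \to \infty$ by Theorem \ref{thstat}. Suppose $S := \sup_{r \geq 3} \phi > 0$; by continuity and the behaviour at $r=3$ and at infinity, $S$ is attained at some interior point $r_0 > 3$, so $\phi'(r_0) = 0$ and $\phi''(r_0) \leq 0$. Using $L[W] = 0$,
\begin{equation*}
-L[Q] = L[W] - L[Q] = \left(1-\tfrac{1}{r}\right)\phi'' + \tfrac{1}{r^2}\phi' + \tfrac{\phi}{r^2}\bigl(1 - W^2 - WQ - Q^2\bigr),
\end{equation*}
which evaluated at $r_0$ becomes
\begin{equation*}
0 < -L[Q](r_0) = \left(1-\tfrac{1}{r_0}\right)\phi''(r_0) + \tfrac{\phi(r_0)}{r_0^2}\bigl(1 - W^2 - WQ - Q^2\bigr)(r_0).
\end{equation*}
But $Q(r_0) \geq Q(3) = 11/18 > 1/\sqrt{3}$ together with $W(r_0) > Q(r_0) > 0$ gives $W(r_0)^2 + W(r_0) Q(r_0) + Q(r_0)^2 > 3 Q(r_0)^2 \geq 1$, so the coefficient of $\phi(r_0)$ is strictly negative; both terms on the right-hand side are thus $\leq 0$, with the second strictly negative — a contradiction.

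The lower bound $W \geq -Q$ follows immediately from the $\zz_2$-symmetry of \eqref{ym} under $W \mapsto -W$: applying the upper-bound argument to $\tilde W := -W$ (which again satisfies $|\tilde W(3)| \leq a$ and tends to $(-1)^{n+1}$ at infinity) yields $-W \leq Q$. The main obstacle is Step~1: the naively leading terms in $L[Q]$ cancel down to order $r^{-3}$, so the sign is determined only by the coefficient of $r^{-4}$, and one must verify by a clean polynomial computation that this sign persists all the way down to $r=3$. Once that sign is pinned down, the remainder is a standard barrier argument, with the only subtle point being that the threshold $Q(3) > 1/\sqrt{3}$ is precisely what makes the reaction term $1 - W^2 - WQ - Q^2$ unfavourable at any putative positive interior maximum.
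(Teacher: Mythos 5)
Your proposal is correct and follows essentially the same route as the paper: you verify that $Q$ is a strict supersolution ($L(Q,r)<0$ for $r\geq 3$, with the identical polynomial $-16r^4+16r^3+6r^2+6r+1$ appearing), then derive a contradiction at an interior maximum of $W-Q$ using precisely the threshold $Q(3)=\frac{11}{18}>\frac{1}{\sqrt{3}}$, and conclude the lower bound by oddness. Your factorization $W(1-W^2)-Q(1-Q^2)=(W-Q)(1-W^2-WQ-Q^2)$ is just a rephrasing of the paper's use of the monotonicity of $x\mapsto x(1-x^2)$ for $x\geq \frac{1}{\sqrt{3}}$, so the two arguments are the same in substance.
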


Let 
$$L(u,r)= \left(1-\frac{1}{r}\right)\p_r^2u+\frac{1}{r^2}\p_ru+\frac{1}{r^2}u(1-u^2).$$
Before proving Proposition \ref{enc}, we need the following lemma
\begin{lemma}
For $r\geq 3$ we have $L(Q,r)<0$ and $L(-Q,r)>0$.
\end{lemma}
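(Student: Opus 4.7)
The lemma is a direct algebraic verification, so my plan is simply to compute $L(Q,r)$ explicitly, organize the result as a rational function of $1/r$, and check the sign.

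First I would compute the derivatives
\begin{equation*}
Q'(r)=\frac{1}{r^2}+\frac{1}{r^3},\qquad Q''(r)=-\frac{2}{r^3}-\frac{3}{r^4},
\end{equation*}
and then the linear part
\begin{equation*}
\Bigl(1-\tfrac{1}{r}\Bigr)Q''+\frac{1}{r^2}Q'=-\frac{2}{r^3}+\frac{4}{r^5}.
\end{equation*}
For the nonlinear part I would use $1-Q^2=(1-Q)(1+Q)$ with $1-Q=\tfrac{1}{r}+\tfrac{1}{2r^2}$ and $1+Q=2-\tfrac{1}{r}-\tfrac{1}{2r^2}$, then expand and multiply by $Q$ itself to obtain, after collecting powers of $1/r$,
\begin{equation*}
\frac{1}{r^2}Q(1-Q^2)=\frac{2}{r^3}-\frac{2}{r^4}-\frac{2}{r^5}+\frac{3}{4r^6}+\frac{3}{4r^7}+\frac{1}{8r^8}.
\end{equation*}

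Adding the two contributions, the leading $\pm 2/r^3$ terms cancel exactly (this cancellation is the essential reason for the choice of $Q$), and one is left with
\begin{equation*}
L(Q,r)=\frac{1}{r^4}\left(-2+\frac{2}{r}+\frac{3}{4r^2}+\frac{3}{4r^3}+\frac{1}{8r^4}\right).
\end{equation*}
It then remains to estimate the bracket: for $r\ge 3$ one has $2/r\le 2/3$, $3/(4r^2)\le 1/12$, $3/(4r^3)\le 1/36$ and $1/(8r^4)\le 1/648$, whose sum is strictly less than $2$. Hence $L(Q,r)<0$ on $[3,\infty)$.

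For the second inequality I observe that $L$ is odd in $u$: the operator $(1-1/r)\partial_r^2+(1/r^2)\partial_r$ is linear and the nonlinearity $u(1-u^2)/r^2$ is an odd function of $u$. Consequently $L(-Q,r)=-L(Q,r)>0$ for $r\ge 3$, which closes the lemma.

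The computation is entirely routine; the only mild subtlety is the cancellation of the $1/r^3$ terms, which forces the final expression to start at order $1/r^4$ with a negative leading coefficient, and this is exactly what makes the margin wide enough to absorb the lower-order positive terms for $r\ge 3$.
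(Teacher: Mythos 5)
Your proof is correct and follows essentially the same route as the paper: an explicit expansion of $L(Q,r)$ in powers of $1/r$ (your result $-\frac{2}{r^4}+\frac{2}{r^5}+\frac{3}{4r^6}+\frac{3}{4r^7}+\frac{1}{8r^8}$ agrees exactly with the paper's), a term-by-term bound at $r\ge 3$, and the oddness of $L$ in $u$ to deduce $L(-Q,r)>0$. The only cosmetic difference is that you separate the linear and cubic parts to highlight the cancellation of the $2/r^3$ terms, which the paper performs implicitly in a single computation.
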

\begin{proof}
Since $L$ is odd in $u$, it is sufficient to prove $L(Q,r)<0$. We calculate
\begin{align*}L(Q,r)=&\left(1-\frac{1}{r}\right)\left(-\frac{2}{r^3}-\frac{3}{r^4}\right)+\frac{1}{r^2}\left(\frac{1}{r^2}+ \frac{1}{r^3}\right)
+\frac{1}{r^2}\left(1-\frac{1}{r}-\frac{1}{2r^2}\right)\left(1-\left(1-\frac{1}{r}-\frac{1}{2r^2}\right)^2\right)\\
=&-\frac{2}{r^4}+\frac{2}{r^5}+ \frac{3}{4r^6}+ \frac{3}{4r^7}+ \frac{1}{8r^8}.
\end{align*}
Consequently, for $r\geq 3$ we have
$$L(Q,r)\leq \frac{1}{r^4}\left(-2+\frac{2}{3}+\frac{3}{4*3^2}+\frac{3}{4*3^3 }+\frac{1}{8*3^4}\right) \leq -\frac{1}{r^4}<0.$$
\end{proof}

\begin{proof}[Proof of Proposition \ref{enc}]
We have $-a\leq W(3) \leq a$ and 
$$a<\frac{11}{18}=1-\frac{1}{3}-\frac{1}{2*9}=Q(3).$$
If the inequality of Proposition \ref{enc} is false, there exists $r_1<r_2$ with $r_2$ which can be infinite such that
$$W(r_1)=Q(r_1), \quad W(r_2)=Q(r_2)$$
and $W>Q$ on $]r_1,r_2[$ (The case $W<-Q$ is treated in a similar way). Consider $r_0$ such that $W-Q$ is maximum at
$r_0$. Note that such a maximum always exists
  independently if $\lim_{r\rightarrow \infty} W(r)=-1$ (in which case
  $r_2<\infty$) or $\lim_{r\rightarrow
    \infty}W(r)=1=\lim_{r\rightarrow \infty}Q(r)$.  Then we have
$$L(W,r_0)-L(Q, r_0)=-L(Q, r_0)>0$$
so
$$\left(1-\frac{1}{r_0}\right) (\p_r^2W-\p_r^2Q)(r_0)+ \frac{1}{r_0^2}\left(W(1-W^2)-Q(1-Q^2)\right)>0$$
Since
$$W(r_0)>Q(r_0)\geq Q(3)= \frac{11}{18} \geq \frac{1}{\sqrt{3}}$$
and the function $x\mapsto x(1-x^2)$ is decreasing for $x\geq \frac{1}{\sqrt{3}}$ we have 
$$\left(W(1-W^2)-Q(1-Q^2)\right) \leq 0$$ and consequently
$$\left(1-\frac{1}{r_0}\right) (\p_r^2W-\p_r^2Q)(r_0) >0$$
which is a contradiction with the fact that $W-Q$ is maximum at
$r_0$. 
\end{proof}

\subsubsection{A bound on the potential}
We now come back to the potential 
$$V=P(3W^2-1)$$
\begin{proposition}
We have
$$\int_{\R} V(x) dx <0.$$
\end{proposition}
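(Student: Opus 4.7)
The plan is to use the change of variables $r = r(x)$ to reduce the integral over $\R$ in the tortoise variable $x$ to a concrete integral in $r \in (1,\infty)$ (recall $m = 1/2$), and then split it at $r = 3$, the natural breakpoint of the two a priori bounds on $W_n$ already established. Since $\frac{dx}{dr} = N^{-1} = \frac{r}{r-1}$ and $P = \frac{1-1/r}{r^2}$, the Jacobians cancel:
\[
\int_{\R} V_n(x)\,dx \;=\; \int_1^{\infty} \frac{3 W_n^2 - 1}{r^2}\,dr.
\]

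Next I split this at $r=3$ and insert the pointwise upper bounds on $W_n^2$ coming from the preceding two propositions. On $[1,3]$ one has $|W_n(r)| \leq a_n \leq a_1 = \frac{1+\sqrt{3}}{3\sqrt{3}+5} = 2 - \sqrt{3}$, hence $3W_n^2 - 1 \leq 3(2-\sqrt{3})^2 - 1 = 20 - 12\sqrt{3} < 0$, giving
\[
\int_1^3 \frac{3W_n^2 - 1}{r^2}\,dr \;\leq\; (20 - 12\sqrt{3})\int_1^3 \frac{dr}{r^2} \;=\; \frac{2(20-12\sqrt{3})}{3} \;=\; \frac{40}{3} - 8\sqrt{3}.
\]
On $[3,\infty)$ Proposition \ref{enc} gives $W_n^2 \leq Q^2 = \bigl(1-\tfrac{1}{r}-\tfrac{1}{2r^2}\bigr)^2 = 1 - \tfrac{2}{r} + \tfrac{1}{r^3} + \tfrac{1}{4r^4}$, so $3W_n^2 - 1 \leq 2 - \tfrac{6}{r} + \tfrac{3}{r^3} + \tfrac{3}{4r^4}$, and a direct antiderivative computation yields
\[
\int_3^{\infty} \frac{3W_n^2 - 1}{r^2}\,dr \;\leq\; \int_3^{\infty}\!\Bigl(\frac{2}{r^2} - \frac{6}{r^3} + \frac{3}{r^5} + \frac{3}{4r^6}\Bigr)dr \;=\; \frac{1}{3} + \frac{1}{108} + \frac{1}{1620}.
\]

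Summing, the total is bounded above by
\[
\frac{41}{3} + \frac{1}{108} + \frac{1}{1620} - 8\sqrt{3},
\]
and the proof reduces to verifying this is negative. Using $\sqrt{3} > 1.732$ gives $8\sqrt{3} > 13.856$, while $\tfrac{41}{3} + \tfrac{1}{108} + \tfrac{1}{1620} < 13.667 + 0.01 < 13.69$, so the bound is strictly negative. The only real content beyond the preceding propositions is the change of variables and this numerical check; the positive contribution from the tail $r \geq 3$ is small (about $0.34$) while the negative contribution from $r \in [1,3]$, which is where $W_n$ is forced to be small by the horizon boundary condition $W_n(1) = a_n \leq 2-\sqrt{3}$, dominates (about $-0.52$). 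The main mild obstacle is that the two bounds must combine with enough slack to close — which they do precisely because $a_1 = 2-\sqrt{3}$ is comfortably below $1/\sqrt{3}$, the threshold where $3W^2 - 1$ changes sign.
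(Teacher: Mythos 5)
Your proof is correct and follows essentially the same route as the paper's: the same change of variables to $\int_1^\infty \frac{3W_n^2-1}{r^2}\,dr$, the same split at $r=3$, and the same two a priori bounds ($|W_n|\le a_n$ on $[1,3]$, $|W_n|\le Q$ beyond). The only cosmetic differences are that you keep the exact $Q^2$ in the tail (getting $\tfrac{1}{3}+\tfrac{1}{108}+\tfrac{1}{1620}$ instead of the paper's slightly cruder $\tfrac{10}{27}$ from $Q\le 1-\tfrac1r$) and substitute $a_n\le a_1=2-\sqrt{3}$ at the start rather than checking $a_n<\tfrac{2}{3\sqrt{3}}$ at the end.
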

\begin{proof}
First note that 
\begin{equation*}
\int_{\R} V(x)dx=\int_1^{\infty} \frac{3W^2-1}{r^2} dr. 
\end{equation*}
We estimate
$$\int_1^3 \frac{3W^2-1}{r^2} \leq \int_1^3 \frac{3a^2-1}{r^2}= \frac{2(3a^2-1)}{3}$$
and
\begin{eqnarray*}
\int_3^\infty \frac{3W^2-1}{r^2} &\leq& \int_3^\infty \frac{1}{r^2}\left( 3\left(1-\frac{1}{r}\right)^2-1\right)
\leq \int_3^\infty \frac{1}{r^2}\left(2-\frac{6}{r}+
  \frac{3}{r^2}\right)=\left[-\frac{2}{r}+\frac{3}{r^2}-\frac{1}{r^3}
\right]_3^\infty\\
&=& \frac{1}{3}+ \frac{1}{27}= \frac{10}{27}
\end{eqnarray*}
Note that
$$\frac{2(3a^2-1)}{3}+\frac{10}{27}<0$$
because $a\le\frac{1+\sqrt{3}}{5+3\sqrt{3}}<
\frac{2}{3\sqrt{3}}$. Therefore we have
$$\int_{\R} V(x) dx<0.$$
\end{proof}

\subsection{Proof of Theorem \ref{Mainth}}
The main theorem with $\cE$ replaced by $X$ now follows from the
abstract result. In order to be able to replace $X$ by $\cE$ we need
the following lemma. We will drop the index $n$. 
\begin{lemma}
\label{lem2}
Let $\phi_0$ be an eigenfunction of $\cA$ with eigenvalue $-\lambda^2$. Then we
have 
\begin{eqnarray}
\label{lem2.1}
\int_{\R} P\vert\phi_0\vert^2\ge \lambda^2\int_{\R} \vert
\phi_0\vert^2.\\
\label{lem2.2}
-\int V\vert\phi_0\vert^2\ge 0. 
\end{eqnarray}
\end{lemma}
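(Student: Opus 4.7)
The plan is to test the eigenvalue equation against $\phi_0$ and exploit the sign structure of the potential $V_n = P(3W_n^2 - 1)$. First I would take the $L^2$ inner product of the eigenvalue identity $-\phi_0'' + V\phi_0 = -\lambda^2 \phi_0$ with $\phi_0$. Since $\phi_0 \in D(\cA) = H^2(\R)$, integration by parts is justified with no boundary contributions, yielding
\[
\int_{\R}|\phi_0'|^2\,dx + \int_{\R}V|\phi_0|^2\,dx = -\lambda^2\int_{\R}|\phi_0|^2\,dx,
\]
which rearranges to
\[
-\int_{\R}V|\phi_0|^2\,dx = \int_{\R}|\phi_0'|^2\,dx + \lambda^2\int_{\R}|\phi_0|^2\,dx.
\]
Both terms on the right are non-negative, which gives \eqref{lem2.2} at once, and in fact the sharper bound $-\int V|\phi_0|^2 \ge \lambda^2\int|\phi_0|^2$.

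To obtain \eqref{lem2.1} I would then use the explicit form of $V$, rewriting $-V = P - 3PW_n^2$. Substituting into the inequality just derived gives
\[
\int_{\R}P|\phi_0|^2\,dx - 3\int_{\R}PW_n^2|\phi_0|^2\,dx = -\int_{\R}V|\phi_0|^2\,dx \ge \lambda^2\int_{\R}|\phi_0|^2\,dx.
\]
Since $P = (1-2m/r)/r^2 \ge 0$ on $r>2m$ and $W_n^2 \ge 0$, the second term on the left is non-negative and can simply be discarded, which yields \eqref{lem2.1}.

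I do not anticipate any real obstacle here: the proof is a direct manipulation of the eigenvalue identity together with the trivial observation $PW_n^2 \ge 0$. The only point requiring any care is the integration by parts, which is immediate from $\phi_0 \in H^2(\R)$.
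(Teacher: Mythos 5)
Your proof is correct and follows essentially the same route as the paper: both multiply the eigenvalue equation by $\phi_0$, integrate by parts to obtain the identity $\int \vert \phi_0'\vert^2+\int V\vert \phi_0\vert^2+\lambda^2\int\vert \phi_0\vert^2=0$, and then use $-V=P-3PW_n^2\le P$ together with the nonnegativity of $\int\vert\phi_0'\vert^2$ and $3\int PW_n^2\vert\phi_0\vert^2$ to conclude. The only difference is the order (you derive \eqref{lem2.2} first and deduce \eqref{lem2.1} from it, while the paper does the reverse), which is immaterial.
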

\proof 

Let us first show \eqref{lem2.1}. We have 
\begin{equation*}
(-\partial_x^2+V)\phi_0=-\lambda^2\phi_0. 
\end{equation*}
Multiplication by $\phi_0$ and integration by parts gives 
\begin{equation}
\label{lem2.3}
\int \vert \phi_0'\vert^2+\int V\vert \phi_0\vert^2+\lambda^2\int
\vert \phi_0\vert^2=0.
\end{equation}
Now recall that $V=P(3W^2-1)$, thus 
\begin{equation*}
\int P\vert \phi_0\vert^2\ge \lambda^2\int \vert\phi_0\vert^2. 
\end{equation*}
We now show \eqref{lem2.2}. From \eqref{lem2.3} we obtain :
\begin{equation*}
-\int V\vert \phi_0\vert^2=\int \vert\phi_0'\vert^2+\lambda^2\int
\vert\phi_0\vert^2\ge 0.
\end{equation*}
\qed

Let $\tilde{\cH}^1$ the completion of $C_0^{\infty}$ for
the norm 
\begin{equation*}
\Vert u\Vert_{\tilde{\cH}^1}^2=\Vert u\Vert_{\dot{H}^1}^2+\Vert
u\Vert_{L^2_P}^2
\end{equation*}
We put $\tilde{\cE}=\tilde{\cH}^1\times L^2$. 

{\bf Proof of Theorem \ref{Mainth}}

We continue using the notations of the abstract setting. We claim that it is sufficient to show the following :
\begin{equation}
\tag{IM}\label{IM}
\begin{array}{c} \mbox{There exists $\epsilon_0>0$ and a sequence $\psi_0^m$ with $\Vert
\psi_0^m\Vert_X\rightarrow 0,\, m\rightarrow \infty$,}\\ \mbox{but for all $m$}\quad 
\sup_{t\ge 0}\Vert \psi^m(t)\Vert_{\tilde{\cE}}\ge
\epsilon_0>0.\end{array}
\end{equation}
To see this we first note that 
\begin{equation*}
\Vert \psi_0^m\Vert_{\cE}\le \Vert \psi_0^m\Vert_X
\end{equation*}
because 
\begin{equation*}
\left(\int P\vert u\vert^4\right)^{1/4}\lesssim
\Vert u\Vert^{1/2}_{\infty}\Vert u\Vert_{L^2}^{1/2}\le \Vert u\Vert_{H^1}
\end{equation*}
by the Sobolev embedding $H^1\hookrightarrow L^{\infty}$. On the other hand 
\begin{equation*}
\Vert u\Vert_{L^2_P}=\left(\int P\vert u\vert^2\right)^{1/2}\le
\left(\int P\right)^{1/4}\left(\int P\vert
  u\vert^4\right)^{1/4}\lesssim \Vert u\Vert_{L^4_P}
\end{equation*}
and thus 
\begin{equation*}
\Vert \psi^m(t)\Vert_{\cE}\gtrsim \Vert \psi^m(t)\Vert_{\tilde{\cE}}.
\end{equation*}
Let us now show \eqref{IM}. We follow the proof of the main
theorem. We choose 
\begin{equation*}
\psi_0=\left(\begin{array}{c} \phi_0 \\ \frac{\lambda_0}{i}
    \phi_0\end{array}\right),\, \phi_0\in
  \one_{\{-\lambda_0^2\}}(\cA)\cH,\, \Vert
  \phi_0\Vert=\frac{1}{3(1+\Vert V_-\Vert_{\infty})^{1/2}}\rho.  
\end{equation*}
We estimate 
\begin{eqnarray*}
\Vert
\psi_0\Vert_X^2&=&\<(-\partial_x^2+V)\phi_0,\phi_0\>-\<V\phi_0,\phi_0\>+\Vert
\phi_0\Vert^2+\lambda_0^2\Vert \phi_0\Vert^2\\
&\le&(\Vert V_{-}\Vert_{\infty}+1)\Vert\phi_0\Vert^2=1/9\rho^2. 
\end{eqnarray*}
Thus the first part of the proof goes through without any changes. We
then have to estimate $\Vert \psi(\tau)\Vert_{\tilde{\cE}}$. We estimate 
\begin{eqnarray*}
\Vert
\psi_0\Vert^2_{\tilde{\cE}}&=&\<\cA\phi_0,\phi_0\>-\<V\phi_0,\phi_0\>+\int
P\vert\phi_0\vert^2+\lambda_0^2\vert \phi_0\vert^2\\
&=&-\<V\phi_0,\phi_0\>+\int P\vert\phi_0\vert^2\\
&\ge&\int P\vert \phi_0\vert^2\ge \lambda_0^2\int \vert\phi_0\vert^2\\
&=&\lambda_0^2\frac{1}{9(1+\Vert V_-\Vert_{\infty})}\rho^2.
\end{eqnarray*}
Here we have used Lemma \ref{lem2}. Using 
\begin{equation*}
\Vert u\Vert_{\tilde{
\cE}}\le C_1\Vert u\Vert_X
\end{equation*} 
we find 
\begin{eqnarray*}
\Vert \psi(\tau)\Vert_{\tilde{\cE}}\ge \frac{\lambda_0}{3(1+\Vert
  V_-\Vert_{\infty})^{1/2}}\rho-\frac{2C_1M_LM_F\Vert
  E_1\Vert}{2\beta-\lambda_1}\rho^2\ge \frac{\lambda_0}{6(1+\Vert V_-\Vert_{\infty})}\rho
\end{eqnarray*}
for $\rho$ small enough. 
\qed
	
\subsection{Proof of Corollary \ref{corstat}}
We recall
$$E^{(\p_t)}(F(t))=
\cE(W,\dot{W}).$$
We take the same sequence of data $W_{0,n}^m$ as in Theorem
\ref{Mainth}. We first have to show that 
\begin{equation*}
\int P ((W_{0,n}^m)^2-W_n^2)^2\rightarrow 0,\quad m\rightarrow \infty. 
\end{equation*} 
This follows from 
\begin{eqnarray*}
\int P ((W_{0,n}^m)^2-W_n^2)^2&\lesssim&\int P (W^m_{0,n}-W_n)^4+\int
PW_n^2(W^m_{0,n}-W_n)^2\\
&\lesssim&\int P (W^m_{0,n}-W_n)^4+\left(\int P (W^m_{0,n}-W_n)^4\right)^{1/2}\rightarrow 0,\quad m\rightarrow \infty
\end{eqnarray*}
by Theorem \ref{Mainth}. In the first inequality we have used the estimate
$$(A^2-B^2)^2= (A-B)^2(A+B)^2= (A-B)^2(A-B+2B)^2 \leq 2(A-B)^4+ 8B^2(A-B)^2,$$
and the fact that $\Vert W_n \Vert_{L^\infty}\leq 1$.
Now we have to show that 
\begin{equation}
\label{eqcor1}
\sup_{t\ge 0} \int (\dot{W}^m_n)^2+((W^m_n)'-W_n')^2+P
((W^m_n)^2-W_n^2)^2\ge \epsilon_1>0.
\end{equation}
We know by Theorem \ref{Mainth} that 
\begin{equation}
\label{eqcor2}
\sup_{t\ge 0}\int (\dot{W}^m_n)^2+((W^m_n)'-W_n')^2+P
(W^m_n-W_n)^4\ge \epsilon_0>0.
\end{equation}
We also know from the proof of Theorem \ref{Mainth} that this supremum is achieved on the interval
$[0,m]$ and that on this interval 
\begin{equation*}
\Vert W^m_n-W_n\Vert_{L^2}\le \rho
\end{equation*}
for some $\rho>0$. Now observe that for $u\in H^1$ we have 
\begin{equation}
\label{eqcor3}
\int Pu^2\le 2\left(\int \frac{1}{r^2}
  u^2\right)^{1/2}\left(\int (u')^2\right)^{1/2}. 
\end{equation}
Indeed by density we can suppose $u\in C_0^{\infty}(\R)$ and then
compute 
\begin{eqnarray*}
\int Pu^2=\int \p_x(-\frac{1}{r})u^2=2\int \frac{1}{r}uu'\le 2 \left(\int \frac{1}{r^2}
  u^2\right)^{1/2}\left(\int (u')^2\right)^{1/2}. 
\end{eqnarray*}
Let us now show \eqref{eqcor1}. We can suppose that 
\begin{equation*}
\sup_{t\ge 0} \int ((W_n^m)'-W_n')^2\le \frac{\epsilon^2_0}{4^4\rho^2}
\end{equation*}
because otherwise there is nothing to show. Then we estimate 
\begin{eqnarray*}
\lefteqn{\int \dot{W_n^m}^2+((W_n^m)'-W'_n)^2+P((W_n^m)^2-W_n^2)^2}\\
&\ge&\int \dot{W_n^m}^2+((W_n^m)'-W'_n)^2+\frac{1}{2}P(W_n^m-W_n)^4-4PW_n^2(W_n^m-W_n)^2\\
&\ge&\int\frac{1}{2}(\dot{W_n^m}^2+((W_n^m)'-W'_n)^2+P(W_n^m-W_n)^4)\\
&-&4\left(\int((W_n^m)'-W_n')^2\right)^{1/2}\left(\int\frac{(W_n^m-W_n)^2}{r^2}\right)^{1/2}\\
&\ge&\frac{1}{2}\int\dot{W_n^m}^2+((W_n^m)'-W'_n)^2+P(W_n^m-W_n)^4-\epsilon_0/4,
\end{eqnarray*}
where in the first inequality we have used the estimate
\begin{align*}(A^2-B^2)^2&= (A-B)^2(A-B+2B)^2 = (A-B)^2\left((A-B)^2 +4B(A-B) +4B^2\right) \\
&\geq (A-B)^2((A-B)^2-\frac{1}{2}(A-B)^2-8B^2+4B^2)=
 \frac{1}{2}(A-B)^4-4B^2(A-B)^2,
 \end{align*}
 and in the second inequality we have used \eqref{eqcor3} and the fact that $\Vert W_n \Vert_{L^\infty}\leq 1$.

The supremum over $t\ge 0$ of this expression is $\ge \epsilon_0/4$ by
\eqref{eqcor2}.
\qed
\appendix
\section{Proof of Theorem \ref{thstat}}
In this appendix we give an explicit proof of theorem \ref{thstat}. We
adapt in the simpler uncoupled case the arguments of Smoller Wasserman
Yau, McLeod \cite{SWYM}; Smoller, Wasserman, Yau \cite{SWY} and
Smoller, Wasserman \cite{SW} to show the existence of infinitely many
solutions. In this appendix we work with the $r$ variable and
\underline{we note $'=\p_r$} in this appendix !  Again we
  can suppose that $m=1/2$. Recall that the
stationary equation writes
\begin{equation}
\label{eqA.1}
\left(1-\frac{1}{r}\right)W''+\frac{1}{r^2}W'+\frac{1}{r^2}W(1-W^2)=0.
\end{equation}
\subsection{Local solutions}
\begin{proposition}
Let $0<\alpha<1$ and $0\leq a \leq 1$. There exists $r_a>1$ and a unique solution $W \in C^{2,\alpha}([1,r_a])$ with boundary condition
$$W(1)=a, \quad W'(1)= b , \quad W''(1)=c$$
where
$$b= -a(1-a^2), \quad 2c=-b(1-3a^2)$$
\end{proposition}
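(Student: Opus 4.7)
The plan is to construct $W$ as an analytic function near $r=1$ via a power series argument, exploiting the fact that $r=1$ is a regular singular point of the equation. After the substitution $\rho=r-1$ and $u(\rho)=W(1+\rho)$, the ODE \eqref{eqA.1} becomes
$$\rho(1+\rho)u''(\rho)+u'(\rho)+u(\rho)(1-u(\rho)^2)=0.$$
Since the coefficient of $u''$ vanishes at $\rho=0$, evaluating this equation at $\rho=0$ forces $u'(0)=-a(1-a^2)=b$; differentiating once and evaluating again at $\rho=0$ yields $2u''(0)+u'(0)(1-3a^2)=0$, i.e.\ $u''(0)=c$. Thus $b$ and $c$ in the statement are not free parameters but are dictated by $a$ and the ODE.

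I would then seek a formal series $u(\rho)=\sum_{n\ge 0}a_n\rho^n$ with $a_0=a$. Matching coefficients gives $a_1=b$, $a_2=c/2$, and, for $m\ge 2$, the recursion
\begin{equation*}
(m+1)^2\,a_{m+1}=c_m-(m^2-m+1)\,a_m,\qquad c_m=\sum_{i+j+k=m}a_ia_ja_k,
\end{equation*}
which determines all coefficients uniquely. To prove convergence, I would show by induction that $|a_n|\le KR^n$ for suitable constants $K,R$: using the elementary inequalities $\binom{m+2}{2}/(m+1)^2\le 1$ and $(m^2-m+1)/(m+1)^2\le 1$, the recursion gives $|a_{m+1}|\le (K^3+K)R^m$, which is at most $KR^{m+1}$ whenever $R\ge K^2+1$. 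Choosing $K$ large enough to dominate $|a_0|$, $|a_1|/R$, $|a_2|/R^2$ and then setting $R=K^2+1$ closes the induction; in fact, for $a\in[0,1]$ one can take $K=1$, $R=2$ uniformly in $a$. The series therefore converges on $|\rho|<1/R$, producing an analytic, a fortiori $C^{2,\alpha}$, solution on some $[1,r_a]$.

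For uniqueness, suppose $W_1,W_2\in C^{2,\alpha}([1,r_a])$ are two such solutions. Their difference $V=W_1-W_2$ satisfies a linear singular ODE of the same type, with $V(1)=V'(1)=V''(1)=0$. Rewriting the ODE as
\begin{equation*}
\frac{d}{dr}\bigl[r(r-1)V'\bigr]=2(r-1)V'-g(r)V,\qquad g=1-W_1^2-W_1W_2-W_2^2,
\end{equation*}
and integrating from $1$ to $r$ gives a Volterra-type equation for $V'$; combined with $V(r)=\int_1^r V'(s)\,ds$, a Gronwall argument on a small interval forces $V\equiv 0$. The $C^2$ regularity is used here to exclude the logarithmic correction branch $(r-1)^2\log(r-1)$ associated with the double indicial exponent $\lambda=0$ at $r=1$.

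The main obstacle is the coefficient estimate in Paragraph 2: the cubic nonlinearity generates a combinatorial factor $\binom{m+2}{2}\sim m^2$, which is exactly absorbed by the $(m+1)^2$ denominator arising from the leading term of the ODE at $\rho=0$. This precise matching is what makes the series converge with a radius bounded below uniformly in $a\in[0,1]$; had the denominator been only linear in $m$, the bound would not close. A secondary technical point is the uniqueness argument, which requires care with the singular point and with the exclusion of logarithmic modes.
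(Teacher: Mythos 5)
Your proposal is correct, but it follows a genuinely different route from the paper. The paper's proof sets $z=W'$ and exhibits the solution as the fixed point of the map $(w,z)\mapsto\bigl(a+\int_1^r z,\; b-\int_1^r \tfrac{1}{\rho(\rho-1)}(z+w(1-w^2))\bigr)$ on a ball of $C^{2,\alpha}\times C^{1,\alpha}([1,1+\epsilon])$, the contraction constant being $\frac{1}{1+\alpha}+C(A)\epsilon^{1-\alpha}$; the H\"older exponent $\alpha>0$ is precisely what makes $\frac{1}{1+\alpha}<1$, existence and uniqueness come in one stroke, and continuous dependence on $a$ (Corollary \ref{cont}), which the rest of the appendix uses repeatedly, is an immediate by-product of the uniform contraction. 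Your Frobenius-type argument instead exploits that $r=1$ is a regular singular point with analytic nonlinearity: your transformed equation $\rho(1+\rho)u''+u'+u(1-u^2)=0$ and the recursion $(m+1)^2a_{m+1}=c_m-(m^2-m+1)a_m$ are both correct, the combinatorial bound $\binom{m+2}{2}\le(m+1)^2$ does close the induction, and with $K=1$, $R=2$ you obtain an analytic solution with radius of convergence at least $1/2$ uniformly in $a\in[0,1]$ --- a stronger conclusion (analyticity, uniform interval of existence) that dispenses with H\"older machinery entirely, at the price of a separate uniqueness argument. Your Volterra/Gronwall uniqueness step is sound, with two cosmetic corrections: for the double indicial root $\lambda=0$ the singular branch of the linearized equation behaves like $\log(r-1)$, not $(r-1)^2\log(r-1)$, so it is already excluded by boundedness of $V'$ (which your integration from $r=1$ implicitly uses), not by $C^2$ regularity; and after concluding $V\equiv 0$ on a small interval you should add that classical Cauchy-Lipschitz uniqueness (the equation is regular for $r>1$) propagates $V\equiv 0$ to all of $[1,r_a]$. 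Finally, if one wanted to recover the paper's Corollary \ref{cont} from your construction, it suffices to observe that each coefficient $a_n$ is a polynomial in $a$ obeying the uniform bound $|a_n|\le 2^n$, so $a\mapsto W_a$ is continuous (indeed real-analytic) into $C^2([1,1+\delta])$ for any $\delta<1/2$.
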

\begin{proof}
We set $z= W'$ to write the equation as a first order system. We consider
$$X= \{(w,z)\in C^{(2,\alpha) }([1,1+\ep])\times C^{(1,\alpha) }([1,1+\ep]), w(1)=a,w'(1)=z(1)=b,w''(1)=z'(1)=c\}$$
and the map $T:(w,z) \in X \mapsto (\wht w,\wht z)$ with
\begin{align*}
\wht w &= a+ \int_1^r z,\\
\wht z &= b- \int_1^r \frac{1}{\rho(\rho-1)}(z+w(1-w^2)).
\end{align*}
We first show that $T$ preserves the boundary conditions.
We calculate
\begin{align*}\wht z'=& -\frac{1}{r(r-1)}(z+w(1-w^2))\\
=&-\frac{1}{r(r-1)}\left(z
+a(1-a^2) + \int_1^r w'(1-3w^2)
\right)\\
=&-\frac{1}{r(r-1)}(z-b)  -\frac{1}{r(r-1)}\int_1^r w'(1-3w^2)
\end{align*}
so $\wht z'(r)\rightarrow -z'(1) -w'(1)(1-3w^2(1))=-c+2c=c$ when $r\rightarrow 1$.
We now show that $T$ is a contraction in $B_X(0,A)$ for $\ep$ small enough. For this the only difficulty is to estimate
\begin{align*}
\frac{|\wht z'(r)-\wht z'(1)|}{|r-1|^\alpha}
\le & \frac{1}{|r-1|^\alpha}\left|-\frac{1}{r(r-1)}(z+w(1-w^2))-c\right|\\
\le&\frac{1}{|r-1|^\alpha}\Big|-\frac{1}{r(r-1)}\Big(
b+\int_1^r (z'(\rho)-z'(1))d\rho+c(r-1)\\
&+a(1-a^2)+\int_1^r
(w(1-w^2))'(\rho)-(w(1-w^2))'(1) d\rho+b(1-3a^2)(r-1)\Big)-c\Big|
\\
\le &\frac{1}{|r-1|^\alpha}\left|-\frac{1}{r(r-1)}
\int_1^r (z'(\rho)-z'(1))d\rho\right|\\
+& \frac{1}{|r-1|^\alpha}\left| -\frac{1}{r(r-1)} \left(-(r-1)c+ \int_1^r
(w(1-w^2))'(\rho)-(w(1-w^2))'(1) d\rho\right)-c
\right|\\
\le & \frac{1}{r(r-1)^{1+\alpha}}\int_1^r |z'(\rho)-z'(1)|
+c\frac{1}{(r-1)^{\alpha}}\left(1-\frac{1}{r}\right)\\
+&\frac{1}{(r-1)^{1+\alpha}}\|(w(1-w^2))'\|_{C^{1}} (r-1)^2\\
\le & \frac{1}{r(r-1)^{1+\alpha}}\|z'\|_{C^{0,\alpha}}\int_1^r |\rho-1|^\alpha + c\ep^{1-\alpha}+ C(\|w\|_{C^2}) \ep^{1-\alpha}\\
\le &\frac{1}{1+ \alpha}\|z'\|_{C^{0,\alpha}}
+c\ep^{1-\alpha}+ C (\|w\|_{C^{2}})\ep^{1-\alpha}.
\end{align*}
Consequently we can show that for $\ep$ small enough, $T$ is a contraction, with contracting constant $\frac{1}{1+\alpha}+C(A)\ep^{1-\alpha}$, and consequently it has a unique fixed point.
\end{proof}

As a corollary of the proof of local existence we obtain the continuity of the family of solutions $W_a$ with respect to the initial data $a$.

\begin{corollary}\label{cont}
Let $\delta>0$. If $W_a$ is a solution on $[1,R]$ with $-1\leq W_a\leq1$ and $a'$ is sufficiently close to $a$, then $W_{a'}$ is defined on $[1,R]$ we have 
$$\|W_a-W_{a'}\|_{C^{2,\alpha}([1,R])} \leq \delta.$$
\end{corollary}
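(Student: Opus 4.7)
The proof splits naturally into a singular part (near $r=1$) and a regular part (on $[1+\varepsilon, R]$), mirroring the situation in the preceding existence proposition.

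\textbf{Step 1: Uniform local solution near $r=1$.} My plan is to revisit the fixed point argument used to construct $W_a$, this time viewing it as a parametrised family of contractions $T_{a'}$ acting on the space $X$. The boundary data $b(a) = -a(1-a^2)$ and $c(a) = -\tfrac12 b(a)(1-3a^2)$ are polynomial (hence continuous) in $a$, and the map $(w,z) \mapsto T_{a'}(w,z)$ depends continuously on $a'$ through these boundary data. For $a'$ in a small neighbourhood of $a$, the constant $A$ bounding the fixed points in $B_X(0,A)$ can be chosen uniformly, and the contraction constant $\tfrac{1}{1+\alpha} + C(A)\varepsilon^{1-\alpha} < 1$ is then uniform in $a'$. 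The standard Lipschitz-dependence-of-fixed-points argument (applied to $\|T_{a'}(w,z)-T_a(w,z)\|_X$, which goes to zero as $a'\to a$) gives
\begin{equation*}
\|W_a - W_{a'}\|_{C^{2,\alpha}([1, 1+\varepsilon])} \longrightarrow 0 \quad \text{as } a'\to a.
\end{equation*}

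\textbf{Step 2: Continuous dependence on $[1+\varepsilon, R]$.} On this interval, the coefficient $1-\tfrac{1}{r}$ is bounded below by a positive constant, so \eqref{eqA.1} can be written as
\begin{equation*}
W'' = -\frac{1}{r(r-1)}\bigl(W' + W(1-W^2)\bigr),
\end{equation*}
a regular second order ODE with smooth nonlinearity in $W$. Since $-1 \le W_a \le 1$ on $[1,R]$, the solution $W_a$ lies in a compact set on which the right hand side is Lipschitz in $(W,W')$. Standard continuous dependence for ODEs (e.g.\ Grönwall applied to the difference $W_a - W_{a'}$) gives, for initial data $(W_{a'}(1+\varepsilon), W_{a'}'(1+\varepsilon))$ close enough to $(W_a(1+\varepsilon), W_a'(1+\varepsilon))$, that $W_{a'}$ exists on all of $[1+\varepsilon, R]$ and is close to $W_a$ in $C^{2,\alpha}([1+\varepsilon, R])$, with closeness in $C^2$ upgraded to $C^{2,\alpha}$ via the ODE itself (using that $W_{a'}''$ is expressed algebraically in terms of $(W_{a'}, W_{a'}')$ and $r$).

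\textbf{Step 3: Combination.} Choose $\varepsilon>0$ small enough that Step 1 applies and the output matches the hypothesis of Step 2. Given any $\delta>0$, first fix $\varepsilon$ so that Step 1 delivers $\|W_a - W_{a'}\|_{C^{2,\alpha}([1,1+\varepsilon])} \le \delta/2$ for $|a-a'|$ small; then shrink the neighbourhood of $a$ further so that the Step 2 propagation yields $\|W_a - W_{a'}\|_{C^{2,\alpha}([1+\varepsilon,R])} \le \delta/2$. The two estimates together give the claimed bound on $[1,R]$.

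\textbf{Main obstacle.} The only genuinely delicate point is Step 1: the degeneracy at $r=1$ prevents a direct appeal to classical ODE theory, so one must re-examine the contraction estimate from the proof of the proposition and verify that all constants can be chosen uniformly in the parameter $a'$. Once this is done, Step 2 is textbook material and Step 3 is just gluing.
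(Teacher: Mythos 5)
Your proof is correct and takes essentially the same route the paper intends: the paper gives no separate argument, stating the corollary as a direct consequence of the contraction-mapping proof of local existence, which is exactly your Step 1 (uniformity of the contraction constant and continuity of $T_{a'}$ in the parameter), completed as it must be by standard continuous dependence for the now-regular ODE on $[1+\varepsilon,R]$ and gluing (your Steps 2--3). The only point to tidy up is that the space $X$ depends on the boundary data $(a,b,c)$, so the fixed-point comparison should be phrased for the difference $W_a-W_{a'}$, whose boundary data are nonzero but of size $O(|a-a'|)$; this adds only harmless extra terms to the contraction estimate.
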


\subsection{Basic facts}
\begin{lemma}\label{borne}
Let $0<B\leq 1$. As long as $W$ is a $C^2$ solution with $|W|\leq B$ we have $|W'|\leq B$.
\end{lemma}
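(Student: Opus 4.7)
The plan is to argue by contradiction, monitoring the quantity $K(r) := W'(r)^2 - B^2$ and forcing an incompatibility at a hypothetical first crossing between the sign dictated by the definition of $r_1$ and the sign that the ODE imposes on $W''$.

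First I would verify the strict initial inequality $|W'(1)| < B$. From the boundary identity $W'(1) = -a(1-a^2)$ and the assumptions $|a| \le B \le 1$, one has $|W'(1)| = |a|\,|1-a^2| \le |a| \le B$; the inequality is strict because $|1-a^2| < 1$ whenever $a \neq 0$, and because $|W'(1)| = 0 < B$ when $a=0$ (recall $B>0$). Hence $K(1) < 0$.

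Assuming the conclusion fails on some $[1,R]$ where $|W| \le B$, I would set $r_1 := \inf\{r \in (1,R] : K(r) > 0\}$. By continuity and $K(1)<0$, we have $r_1 > 1$, $K(r_1)=0$, and $K \le 0$ on $[1,r_1]$, so $K'(r_1) \ge 0$. Taking WLOG $W'(r_1) = B$ (the case $W'(r_1) = -B$ is symmetric), I would use the ODE rewritten on $r>1$ as $W'' = -\frac{1}{r(r-1)}\bigl[W' + W(1-W^2)\bigr]$ to compute
\[
W''(r_1) = -\frac{1}{r_1(r_1-1)}\bigl[B + W(r_1)\bigl(1-W(r_1)^2\bigr)\bigr].
\]
The key estimate is that $|W(1-W^2)| < B$ whenever $|W| \le B \le 1$ and $B > 0$: since $|1-W^2| \le 1$ we get $|W(1-W^2)| \le |W| \le B$, and the strict inequality holds because the only way to saturate $|1-W^2| = 1$ is $W=0$, which kills the left-hand side, while $W \neq 0$ forces $|1-W^2| < 1$. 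Therefore $B + W(r_1)(1-W(r_1)^2) > 0$, giving $W''(r_1) < 0$ and hence $K'(r_1) = 2B\,W''(r_1) < 0$, contradicting $K'(r_1) \ge 0$.

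I do not anticipate any substantial obstacle; the only delicate point is the singularity of the coefficient $1/(r(r-1))$ at $r=1$, which is neutralised precisely by the strict gap $K(1) < 0$, ensuring the contradiction takes place at $r_1 > 1$ where the ODE may be solved for $W''$ without issue.
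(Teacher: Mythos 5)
Your proof is correct, but it follows a genuinely different route from the paper's. The paper's proof is a one-line integration argument: the equation is rewritten as the exact-derivative identity $\frac{d}{dr}\bigl[(1-\frac{1}{r})W'\bigr]=-\frac{1}{r^2}W(1-W^2)$, the right-hand side is bounded in absolute value by $B/r^2$ using only the non-strict estimate $|W(1-W^2)|\le|W|\le B$, and integrating from $1$ to $r$ gives $\bigl|(1-\frac{1}{r})W'(r)\bigr|\le B\bigl(1-\frac{1}{r}\bigr)$, because the degenerate weight $1-\frac{1}{r}$ vanishes at $r=1$ and annihilates the boundary term; dividing by $1-\frac{1}{r}$ concludes. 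You instead run a first-crossing, maximum-principle style argument on $K=(W')^2-B^2$, which forces you to establish two strict inequalities the paper never needs: the strict initial gap $|W'(1)|<B$, which you correctly extract from the boundary data $W'(1)=-a(1-a^2)$ (including the degenerate case $a=0$), and the strict interior bound $|W(1-W^2)|<B$, used to pin down the sign of $W''(r_1)$ at the crossing point $r_1>1$; both verifications are sound, as is the symmetric case $W'(r_1)=-B$, so the contradiction goes through. The trade-off is this: the paper's argument is shorter, quantitative, and uses no information about $W'(1)$ whatsoever --- the vanishing weight makes it valid for any $C^2$ solution on $[1,r_0]$ with $|W|\le B$, regardless of its initial slope --- whereas your argument is tied to the specific boundary conditions of the local-existence proposition (harmless here, since the lemma is only ever applied to those solutions, but a loss of generality). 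Conversely, your barrier argument does not exploit the exact-derivative structure of the equation, so it would survive perturbations of the ODE for which the paper's integrating-factor trick is unavailable.
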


\begin{proof}
Assume that in $[1,r_0]$ we have $|W|\leq B$.
Then 
$$-\frac{B}{r^2}\leq (1-\frac{1}{r})W''+ \frac{1}{r^2}W'\leq \frac{B}{r^2}$$
and consequently
$$\left[\frac{B}{r}\right]_1^r \leq \left[(1-\frac{1}{r})W'\right]_1^r \leq \left[-\frac{B}{r}\right]_1^r$$
so
$$-B \leq W'(r)\leq B.$$
\end{proof}

\begin{corollary}
The solution $W$ exists and is $C^{2,\alpha}$ as long as $|W|\leq 1$.
\end{corollary}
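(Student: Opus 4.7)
The strategy is a standard ODE continuation argument. Define
\begin{equation*}
R^* = \sup\{R \ge 1 : W \text{ exists as a } C^{2,\alpha} \text{ solution on } [1,R] \text{ and } |W| \le 1 \text{ there}\}.
\end{equation*}
The local existence proposition yields $R^* > 1$. I will show that if $|W| \le 1$ on the whole interval of existence, then $R^* = \infty$ and the solution remains $C^{2,\alpha}$.

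First I would suppose for contradiction that $R^* < \infty$. Lemma \ref{borne} applied with $B = 1$ gives $|W'(r)| \le 1$ on $[1, R^*)$. Rewriting \eqref{eqA.1} away from $r=1$ as
\begin{equation*}
W''(r) = -\frac{1}{r(r-1)}\bigl(W'(r) + W(r)(1-W(r)^2)\bigr),
\end{equation*}
one sees that $W''$ is uniformly bounded on every subinterval $[1+\delta, R^*)$ with $\delta>0$. Combined with the $C^{2,\alpha}$ regularity on $[1, 1+\varepsilon]$ given by the local existence proposition, this provides uniform bounds on $W$, $W'$ and $W''$ on the whole interval $[1, R^*)$, so $W$ and $W'$ are uniformly Lipschitz there and extend continuously to $r = R^*$.

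Next I would apply the standard Cauchy--Lipschitz theorem at $r = R^*$ to the first-order system $(W,z=W')$, whose right-hand side is smooth in $(W,z)$ and $C^\infty$ in $r$ on the open set $r > 1$. This extends $W$ past $R^*$ as a $C^2$ solution. A bootstrap using the ODE as displayed above (its right-hand side being $C^{0,\alpha}$ in $r$ once $W, W'$ are continuous and $r$ is bounded away from $1$) upgrades the extension to $C^{2,\alpha}$. As long as the extended solution still satisfies $|W| \le 1$, this contradicts the maximality of $R^*$, yielding $R^* = \infty$ as claimed.

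The only delicate point is the matching between the local existence at the singular endpoint $r=1$ (handled by the previous proposition, which gives $C^{2,\alpha}$ up to and including $r=1$) and the smooth continuation on $(1,\infty)$; I do not expect any real obstacle, since the two regimes overlap on a nondegenerate interval $[1, 1+\varepsilon]$ and the resulting solution is uniquely defined on $[1, R^*]$.
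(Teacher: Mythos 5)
Your continuation argument is correct and is precisely the reasoning the paper leaves implicit: the corollary is stated there without proof, as an immediate consequence of Lemma \ref{borne} (the derivative bound $|W'|\le 1$), the resulting bound on $W''$ from the equation away from $r=1$, the $C^{2,\alpha}$ local existence at the singular endpoint, and standard Cauchy--Lipschitz continuation on $r>1$. Your write-up fills in exactly these steps, so there is nothing to correct.
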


We now consider the solution $W$ on $[0,r_a[$ where $r_a$ is the smallest $r$ such that $|W|=1$ if it exists, and $r_a= \infty$ otherwise.

\begin{lemma}\label{max}
The solution $W$ cannot have a local minimum with $W>0$ nor a local maximum with $W<0$.
\end{lemma}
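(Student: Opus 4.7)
The plan is a direct maximum principle argument applied to the stationary equation \eqref{eqA.1}. Suppose, for contradiction, that $W$ attains an interior local minimum at some $r_0 \in (1, r_a)$ with $W(r_0) > 0$. Then $W'(r_0) = 0$ and $W''(r_0) \ge 0$. Evaluating \eqref{eqA.1} at $r_0$ yields
\[
\left(1 - \frac{1}{r_0}\right) W''(r_0) \,=\, -\frac{1}{r_0^2}\, W(r_0)\bigl(1 - W(r_0)^2\bigr).
\]
Since we work on $[1, r_a)$ we have $|W(r_0)| < 1$, hence $0 < W(r_0) < 1$ and the right-hand side is strictly negative, while the left-hand side is nonnegative (as $r_0 > 1$ implies $1 - 1/r_0 > 0$). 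This is the desired contradiction.

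The endpoint $r_0 = 1$ must be ruled out separately because \eqref{eqA.1} degenerates there. The boundary data prescribe $W'(1) = -a(1 - a^2)$, which is strictly negative for $0 < a < 1$, so $W$ is strictly decreasing at $r=1$ and $r=1$ cannot be a local minimum of $W$ on $[1, r_a)$.

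The case of a local maximum $r_0$ with $W(r_0) < 0$ is entirely symmetric: at an interior maximum we have $W'(r_0) = 0$ and $W''(r_0) \le 0$, whereas \eqref{eqA.1} gives $(1 - 1/r_0) W''(r_0) = -\tfrac{1}{r_0^2} W(r_0)(1 - W(r_0)^2) > 0$ because $-1 < W(r_0) < 0$; contradiction. The endpoint $r_0 = 1$ is not even a candidate here, since $W(1) = a > 0$.

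There is no substantial obstacle: this is a routine sign-analysis of the ODE, the key observation being that the nonlinear term $W(1-W^2)/r^2$ has the right sign on $(0,1)$ (resp.\ $(-1,0)$) to force concavity (resp.\ convexity) of $W$ at any candidate extremum, which is incompatible with being a minimum (resp.\ maximum).
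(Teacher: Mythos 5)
Your proof is correct and follows essentially the same argument as the paper: evaluate the equation at the critical point, where the sign of $\tfrac{1}{r_0^2}W(r_0)(1-W(r_0)^2)$ is incompatible with the sign of $(1-\tfrac{1}{r_0})W''(r_0)$ forced by the extremum. Your explicit treatment of the endpoint $r_0=1$ (via $W'(1)=-a(1-a^2)<0$) is a small extra care that the paper leaves implicit, but it does not change the nature of the argument.
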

\begin{proof}
If $W$ has a positive local minimum at $r_0$ then
$$\left(1-\frac{1}{r_0}\right)W''(r_0)+\frac{1}{r^2}W(r_0)(1-W^2(r_0))=0$$
but $\frac{1}{r^2}W(r_0)(1-W^2(r_0))>0$ (the local minimum cannot be $1$), and $W''(r_0)\geq 0$, which is a contradiction.
\end{proof}

\begin{lemma}\label{lim}
The solution $W$ can not have a limit $l \neq -1,0,1$.
\end{lemma}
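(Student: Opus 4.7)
The plan is to exploit the divergence structure of the stationary equation. Since $\bigl(1-\frac{1}{r}\bigr)' = \frac{1}{r^2}$, equation \eqref{eqA.1} rewrites as
\[
\frac{d}{dr}\Bigl[\bigl(1-\tfrac{1}{r}\bigr)W'\Bigr] = -\frac{1}{r^2}W(1-W^2).
\]
Setting $U(r) := \bigl(1-\tfrac{1}{r}\bigr)W'(r)$, I would argue by contradiction: suppose $W(r) \to l$ as $r\to\infty$ with $l\notin\{-1,0,1\}$, so that $\gamma := l(1-l^2) \neq 0$. Note that in this scenario $W$ must be globally defined for large $r$ (either $r_a=\infty$ with $|W|<1$, or we continue past $|W|=1$ in a domain where $W$ is bounded near $l$).

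The first step is to show that $W'(r) \to 0$. Since $W$ has a finite limit, the right-hand side of the divergence identity is $O(1/r^2)$ and hence integrable at infinity. Therefore $U(r)$ admits a limit $U_\infty$ as $r\to\infty$, and because $1-1/r\to 1$, we also have $W'(r)\to U_\infty$. If $U_\infty\neq 0$, then $W$ would be unbounded, contradicting $W\to l$; hence $U_\infty=0$.

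The second step is to extract the leading-order behaviour of $W'$. Integrating the divergence identity from $r$ to $\infty$ yields
\[
U(r) = \int_r^\infty \frac{1}{s^2}W(s)(1-W^2(s))\,ds.
\]
Since $W(s)(1-W^2(s)) = \gamma + o(1)$, splitting the integrand as $\gamma/s^2 + o(1/s^2)$ gives $U(r) = \gamma/r + o(1/r)$, and therefore $W'(r) \sim \gamma/r$ as $r\to\infty$. Integrating this asymptotic from a fixed $r_0$ then forces $W(r) - W(r_0) \sim \gamma \log r \to \pm\infty$, contradicting the existence of the finite limit $W\to l$. The proof is essentially a direct calculation once the equation is written in divergence form; the only subtle point is obtaining $U_\infty=0$, which uses merely that $W$ is bounded, and no further ingredient is a serious obstacle.
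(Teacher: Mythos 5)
Your proof is correct and follows essentially the same route as the paper: both integrate the equation in divergence form $\bigl[(1-\tfrac{1}{r})W'\bigr]' = -\tfrac{1}{r^2}W(1-W^2)$ to conclude $W'(r)\approx l(1-l^2)/r$, and then integrate once more to reach the logarithmic divergence that contradicts $W\to l$. The only (harmless) difference is technical: you establish the full limit $W'\to 0$ from integrability of the right-hand side, whereas the paper only needs $W'(r_n)\to 0$ along a sequence obtained from the convergence of $W$.
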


\begin{proof}
Assume that $W\rightarrow l$ with $0<l<1$. We can write for $r_1$ big enough and $r_n \geq r_1$
$$\left[\frac{l(1-l^2)+\ep}{r}\right]_{r_1}^{r_n} \leq \left[(1-\frac{1}{r})W'\right]_{r_1}^{r_n} \leq \left[\frac{l(1-l^2)-\ep}{r}\right]_{r_1}^{r_n}.$$
Since $W \rightarrow l$ there exists a sequence $r_n \rightarrow \infty$ such that $W'(r_n) \rightarrow 0$. Letting $n\rightarrow \infty$ we obtain
$$   \frac{l(1-l^2)-\ep}{r_1} \leq W'(r_1)\left(1-\frac{1}{r_1}\right) \leq \frac{l(1-l^2)+\ep}{r_1}$$
so
$$   \frac{l(1-l^2)-\ep}{r_1-1} \leq W'(r_1)\leq \frac{l(1-l^2)+\ep}{r_1-1}$$
and there exists a constant $C$ such that for $r$ big enough
$$(l(1-l^2)-\ep)\ln(r-1) \leq W(r)-C$$
which is a contradiction.
\end{proof}

\subsection{More technical facts}
\begin{proposition}\label{ham}
Let $0\leq a \leq 1$. There exists $\ep>0$ and $R>0$ such that if
there exists $R<r_0<r_a$ such that $W$ has a local extremum at $r_0$ with $1-\ep \leq |W(r_0)|<1$ then $r_a<\infty$ and $W$ has one and only one zero in $[r_0,r_a]$. 
\end{proposition}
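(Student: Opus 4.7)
By the $W\mapsto -W$ symmetry of \eqref{eqA.1} we may assume $W(r_0)>0$. Since $W'(r_0)=0$ and $W(r_0)>0$, Lemma \ref{max} rules out a local minimum, hence $r_0$ is a local maximum and $W$ is strictly decreasing just past $r_0$.

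The backbone of the proof is a monotone ``energy''. Setting $V(u)=u^2/2-u^4/4$ (an even function, strictly increasing on $[0,1]$ with maximum $1/4$ at $u=\pm1$, and with $V'(u)=u(1-u^2)$) we define
\[
H(r)=\tfrac{r(r-1)}{2}(W'(r))^2+V(W(r)).
\]
Multiplying \eqref{eqA.1} by $r^2$ gives $r(r-1)W''=-W'-W(1-W^2)$; a direct computation then yields
\[
H'(r)=\tfrac{2r-3}{2}(W'(r))^2\ge 0 \qquad (r\ge 3/2),
\]
so $H$ is non-decreasing on $[r_0,r_a)$ provided $R\ge 3/2$. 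Two consequences follow. First, at any $r_1\in(r_0,r_a)$ with $W'(r_1)=0$, $V(W(r_1))\ge V(W(r_0))=V(1-\eta)$, where $\eta:=1-W(r_0)\in(0,\ep]$; by the shape of $V$ this forces $|W(r_1)|\ge 1-\eta$, and since $W$ decreases immediately past $r_0$ such an extremum in $(r_0,r_a)$ must be a local minimum with $W(r_1)\le -(1-\eta)$. Second, once $H(r)>1/4$, the identity $(W')^2=\frac{2}{r(r-1)}(H-V(W))$ shows $W'$ cannot vanish on $\{|W|\le 1\}$, so $W$ is strictly monotone there.

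To convert ``monotone once $H>1/4$'' into ``monotone throughout $[r_0,r_a)$'', change variable $\tau=\log(r/r_0)$; the equation becomes $(1-1/r)W_{\tau\tau}-(1-2/r)W_\tau+W(1-W^2)=0$, a $O(1/r_0)$ perturbation of the autonomous system
\[
W_{\tau\tau}-W_\tau+W(1-W^2)=0.
\]
Its equilibria $(\pm1,0)$ are hyperbolic saddles (eigenvalues $2$, $-1$), and its energy $\mathcal H=W_\tau^2/2+V(W)$ obeys $d\mathcal H/d\tau=W_\tau^2$, hence strictly increases on non-constant orbits. The branch $\Gamma$ of the unstable manifold of $(1,0)$ entering $\{W<1,\,W_\tau<0\}$ therefore has $\mathcal H>1/4$ for every $\tau>0$; the resulting inequality $W_\tau^2>(1-W^2)^2/2$ prevents $W_\tau$ from vanishing on $\{|W|\le 1\}$, so $\Gamma$ decreases strictly monotonically from $W=1$ to $W=-1$, crossing $W=0$ exactly once and reaching $W=-1$ in finite $\tau$-time with $W_\tau<0$.

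Transferring this autonomous picture to the actual equation is the final step. The linearisation of the autonomous system at $(1,0)$ shows that an orbit starting at $(1-\eta,0)$ escapes any fixed neighbourhood of $(1,0)$ in $\tau$-time $\sim\tfrac12\log(1/\eta)$, aligned with the unstable eigendirection, and afterwards tracks $\Gamma$ on a compact time interval. Choosing $\ep$ small and $R$ large enough that the $O(1/R)$ non-autonomous error remains negligible over the whole $\tau$-window $[0,C\log(1/\ep)]$, continuous dependence on parameters yields that the actual orbit reproduces the autonomous behaviour: it reaches $|W|=1$ at some finite $r_a$ and crosses $W=0$ exactly once on $[r_0,r_a]$. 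The main obstacle is coordinating these two smallness scales---the non-autonomous error accumulates in $\tau$ while the transient time is $\log(1/\ep)$, so one needs $R^{-1}\log(1/\ep)$ to be small---and using the Hamiltonian monotonicity of the second paragraph to exclude the otherwise a priori possible scenario of infinitely many oscillations with amplitudes tending to $\pm 1$, a scenario consistent with $H\le 1/4$ but incompatible with shadowing of $\Gamma$.
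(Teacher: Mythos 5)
Your energy function is correct and in fact cleaner than the paper's: with $H=\tfrac{r(r-1)}{2}(W')^2+V(W)$ one indeed gets $H'=\tfrac{2r-3}{2}(W')^2\ge 0$ unconditionally for $r\ge 3/2$, whereas the paper's choice $\tfrac{r^2}{2}(W')^2+V(W)$ is monotone only where $WW'\le 0$; your autonomous phase portrait for $W_{\tau\tau}-W_\tau+W(1-W^2)=0$ is also correct. The genuine gap is the transfer step, and it cannot be closed by ``continuous dependence on parameters'' as you invoke it. First, the relevant $\tau$-window is not $[0,C\log(1/\ep)]$ but $[0,C\log(1/\eta)]$ with $\eta:=1-|W(r_0)|$: the hypothesis allows every $\eta\in(0,\ep]$, so for fixed $(\ep,R)$ this window is unbounded, and your smallness requirement ``$R^{-1}\log(1/\ep)$ small'' is the wrong quantity. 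Second, near a hyperbolic saddle errors do not accumulate linearly; plain Gronwall/continuous-dependence bounds carry a factor $e^{LT}$ with $L$ the unstable exponent $2$, so even though the non-autonomous term decays like $e^{-\tau}/r_0$ (integrable in $\tau$), its effect on the exiting orbit is of order $\int_0^T e^{2(T-s)}e^{-s}r_0^{-1}\,ds\sim e^{2T}/r_0\sim 1/(R\eta)$, which is not small uniformly in $\eta$ for any fixed choice of $\ep$ and $R$. Thus the argument as justified fails exactly in the regime $|W(r_0)|\to 1$ that the proposition must cover. A rescue does exist, but it rests on a structural fact you never use: $W\equiv 1$ solves \eqref{eqA.1} exactly, so $(1,0)$ is an equilibrium of the non-autonomous system as well and the perturbation vanishes there, being $O\bigl((|w|+|w_\tau|)e^{-\tau}/R\bigr)$ in the deviation $w=W-1$; one then needs a stable/unstable splitting with Duhamel and a bootstrap (Levinson-type asymptotics), a substantively more delicate argument than the one you cite.

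The paper avoids the saddle analysis altogether, and your own second paragraph already contains all the needed tools; this is worth seeing because it is uniform in $\eta$ for free. (i) If $W$ never vanished after $r_0$ it would decrease to $0$ (Lemmas \ref{max} and \ref{lim}), but then $H\ge H(r_0)=V(1-\eta)\ge V(1-\ep)$ together with $V(W)\to 0$ forces $(W')^2\gtrsim 1/r^2$, hence $W\le W(r_2)-c\log r\to-\infty$, a contradiction; so there is a first zero $r_1$. (ii) At $r_1$, monotonicity of $H$ gives the quantitative bound $W'(r_1)\le -\sqrt{2V(1-\ep)}\,\bigl(r_1(r_1-1)\bigr)^{-1/2}$, which is as close to $-\tfrac{1}{\sqrt 2\,r_1}$ as desired for $\ep$ small and $R$ large. (iii) Integrating $\bigl((1-\tfrac1r)W'\bigr)'=-\tfrac{1}{r^2}W(1-W^2)$ with $|W(1-W^2)|\le\tfrac{2}{3\sqrt3}$ and using the numerical inequality $\tfrac{2}{3\sqrt3}<\tfrac{1}{\sqrt2}$ yields $W'(r)\le-\tfrac{2}{3\sqrt3\,(r-1)}$ for all $r>r_1$ while $|W|\le 1$; so $W$ never turns around, decreases below any bound logarithmically, hence reaches $-1$ at a finite $r_a$ with no second zero. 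Note that the bound in (ii) only improves as $\eta\to 0$, which is precisely the uniformity the shadowing route lacks; in particular the ``oscillations with amplitudes tending to $\pm1$'' scenario is excluded by (ii)--(iii) directly, with no reference to the unstable manifold $\Gamma$.
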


\begin{proof}
We consider the case $W(r_0)>0$. The other case can be treated similarly.
We consider
$$H= r^2\frac{(W')^2}{2} + \frac{W^2}{2}-\frac{W^4}{4}.$$
We calculate
\begin{align*}H'(r)=&r (W')^2 +r^2\frac{1}{r(1-r)}(W'+W(1-W^2))W' + WW'-W^3W'\\
= &(W')^2\left(r +\frac{r^2}{r(1-r)}\right)+ WW'(1-W^2)\left(1+ \frac{r^2}{r(1-r)}\right)\\
=&(W')^2\left(r +\frac{r^2}{r(1-r)}\right)+ WW'(1-W^2)\frac{1}{1-r}.
\end{align*}
Let $R$ be such that for $r\geq R$ we have
$$\left(r +\frac{r^2}{r(1-r)}\right)>0$$
then for $r\geq R$ and $WW'\leq 0$ we have $H'(r)>0$. With our assumption on $r_0$ we can estimate
$$H(r_0) \geq \frac{(1-\ep)^2}{2}-\frac{1}{4}\geq \frac{1}{4\delta}$$
for a suitable $\delta$ which will be precised later, and $\ep$ small enough.
Since $W$ has a local maximum at $r_0$, there are two possibilities
\begin{itemize}
\item We have $r_a= \infty$, $W$ is decreasing on $[r_0,+ \infty[$ and $W\rightarrow 0$ at $\infty$.
\item There exists $r_1<r_a$ such that $W(r_1)= 0$ and W is decreasing on $[r_0,r_1]$.
\end{itemize}
In the first case we obtain for all $r\geq r_a$, $H'(r)>0$ so
$$H(r) \geq \frac{1}{4\delta}$$
and since $W\rightarrow 0$ the expression of $H$ yields the existence of $r_2$ such that for $r\geq r_2$
$$W'(r)^2 \geq \frac{1}{(2\delta+1)r^2}$$
so
$W'(r)\leq -\frac{1}{\sqrt{2\delta+1}r}$
and $W(r) \leq W(r_2)-\frac{1}{\sqrt{2\delta+1}}\ln(r)$ which is a contradiction. 
Consequently we are in the second case.
We have $H(r_1)\geq H(r_0)$ so we can estimate $W'(r_1)$
$$W'(r_1)\leq -\frac{1}{\sqrt{2\delta}r_1}$$
Moreover, when $-1\leq W\leq 1$ we have
$W(1-W^2)\leq \frac{2}{3\sqrt{3}}$ and consequently we can write for $r_a>r_2>r_1$
$$\left[W'(r)\left(1-\frac{1}{r}\right)\right]_{r_1}^{r_2}
\leq \left[-\frac{2}{3\sqrt{3}r}\right]_{r_1}^{r_2}$$
and consequently
$$\left(1-\frac{1}{r_2}\right)W'(r_2)\leq
-\left(1-\frac{1}{r_1}\right)\frac{1}{\sqrt{2\delta}r_1}
+\frac{2}{3\sqrt{3}r_1}-\frac{2}{3\sqrt{3}r_2}.$$
For $r_1$ big enough (which is possible by choosing $R$ big enough) and $\delta$ close enough to $1$ (which is possible by choosing $\ep$ small enough) we have
$$-\left(1-\frac{1}{r_1}\right)\frac{1}{\sqrt{2\delta}r_1}
+\frac{2}{3\sqrt{3}r_1}\leq 0,$$
since $\frac{2}{3\sqrt{3}}<\frac{1}{\sqrt{2}}$.
and consequently
$$W'(r_2)\leq -\frac{2}{3\sqrt{3}(r_2-1)},$$
and therefore $r_a<\infty$, $W(r_a)=-1$ and $W'$ is decreasing on $[r_1,r_a]$. This concludes the proof of Proposition \ref{ham}
\end{proof}

\begin{proposition}\label{zero}
Let $N>0$. Then for $a$ small enough, the solution $W$ has more than $N$ zeros on $[1,r_a]$
\end{proposition}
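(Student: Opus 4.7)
The idea is to rescale and reduce to the linear limit equation. Set $V = W/a$, so that $V$ satisfies
\begin{equation*}
\bigl(1 - \tfrac{1}{r}\bigr) V'' + \tfrac{1}{r^2} V' + \tfrac{1}{r^2} V(1 - a^2 V^2) = 0, \qquad V(1) = 1,\ V'(1) = -(1 - a^2).
\end{equation*}
For $a = 0$ this reduces to the linear equation
\begin{equation*}
\bigl(1 - \tfrac{1}{r}\bigr) V_0'' + \tfrac{1}{r^2} V_0' + \tfrac{1}{r^2} V_0 = 0,\qquad V_0(1) = 1,\ V_0'(1) = -1.
\end{equation*}
The compatibility $z(1) + w(1)(1 - a^2 w(1)^2) = 0$ required by the fixed-point argument of the first Proposition of this appendix still holds, and that argument produces a $C^{2,\alpha}$ solution $V_a$ for every $a \in [0,1]$ on a common interval $[1, 1+\varepsilon]$. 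Standard continuous dependence on parameters then gives $V_a \to V_0$ in $C^2([1,R])$ as $a \to 0$, for any $R$ on which $V_0$ exists.

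\textbf{Oscillation of $V_0$.} In the tortoise coordinate $x$, $V_0$ solves
\begin{equation*}
V_{0,xx} + P V_0 = 0, \qquad P(r) = \frac{1 - 1/r}{r^2}.
\end{equation*}
Since $x = r + \log(r - 1) + O(1)$ as $r \to \infty$, one has $P(x) = (1 + o(1))/x^2$ for large $x$. Fix $c$ with $\tfrac{1}{4} < c < 1$; then $P(x) \geq c/x^2$ beyond some $x_\star$, while the comparison equation $u'' + (c/x^2) u = 0$ is Eulerian and admits the solutions $\sqrt{x}\,\cos\bigl(\tfrac{\sqrt{4c - 1}}{2}\log x + \varphi\bigr)$, whose zeros accumulate at $+\infty$. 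By Sturm comparison, $V_0$ has at least one zero between any two consecutive zeros of this comparison solution, hence infinitely many zeros in $[1,\infty)$, each of which is simple because $V_0$ is a nontrivial solution of a linear second-order ODE.

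\textbf{Conclusion.} Given $N$, pick $R > 1$ large enough that $V_0$ has at least $N+1$ simple zeros in $[1, R]$, and set $M = \|V_0\|_{L^\infty([1,R])} + 1$. By the continuous dependence established above, for $a$ small enough $V_a$ is $C^1$-close to $V_0$ on $[1,R]$; in particular $|V_a| \leq M$ on this interval, and since the zeros of $V_0$ are simple each of them persists as a sign change of $V_a$, so $V_a$ has at least $N + 1$ zeros in $[1, R]$. But then $W_a = a V_a$ satisfies $|W_a| \leq a M < 1$ on $[1, R]$ for $a$ small, forcing $r_a > R$, and the $N + 1$ zeros of $V_a$ are zeros of $W_a$ in $[1, r_a]$.

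The main obstacle is the oscillation claim for $V_0$, which requires control of both the regular singular point at $r = 1$ (one needs the Frobenius-regular solution selected by $V_0(1) = 1$, since the indicial equation has a double root at $0$ and an independent solution has logarithmic behavior) and the logarithmic discrepancy $x - r \sim \log r$ at infinity, needed to apply Sturm comparison uniformly on a neighborhood of $+\infty$.
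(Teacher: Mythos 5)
Your proof is correct, but it takes a genuinely different route from the paper's. You rescale $W=aV$ and reduce the statement to the oscillation of the linearized equation $V_{0,xx}+PV_0=0$, obtained by Sturm comparison with the Euler equation $u''+c\,x^{-2}u=0$, $\frac{1}{4}<c<1$, using $P(x)=(1+o(1))x^{-2}$ in the tortoise coordinate; the zeros then persist for small $a$ by continuous dependence, since they are simple. The paper never linearizes: it introduces the Pr\"ufer-type angle $\psi$ with $\tan\psi=rW'/W$ for the nonlinear solution itself, shows $\psi'(r)\le -\frac{1}{12r}$ whenever $|W|\le\delta$, so the phase winds down by $\frac{1}{12}\log R$ on $[1,R]$, and then only needs Corollary \ref{cont} at the base point $a=0$ (where the solution vanishes identically) to ensure $|W_a|\le\delta$ on $[1,R]$ for small $a$. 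The underlying mechanism is the same in both arguments --- logarithmic accumulation of phase coming from $P\sim 1/x^2$ with coefficient $1>\frac{1}{4}$ --- but the trade-offs differ. Your version uses only classical Sturm theory and makes the oscillation mechanism transparent; on the other hand it requires a stronger continuity statement than Corollary \ref{cont}: the convergence $V_a=W_a/a\to V_0$ in $C^1([1,R])$ amounts to differentiability of the data-to-solution map at $a=0$ through the singular point $r=1$, so the fixed-point argument of the local existence proposition must be re-run for the $a$-dependent rescaled equation, uniformly in $a\in[0,1]$; you flag this and it is routine (the compatibility conditions and the contraction constant are uniform), but it is not literally covered by Corollary \ref{cont}. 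Conversely, the paper's phase estimate applies to any solution segment on which $|W|\le\delta$, independently of the initial data; this extra generality is exactly what is reused in the proof of Corollary \ref{limbis} to show that a solution tending to $0$ at infinity must have infinitely many zeros --- a consequence your rescaling argument cannot deliver, since it is tied to small initial data $a$.
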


\begin{proof}
To count the number of zeros of a solution $W$ we can introduce the function $\theta$ which is the continuous function such that
$$\tan(\theta)= \frac{W'}{W}$$
and $ -\frac{\pi}{2}<\theta(1)<\frac{\pi}{2}$. Then $W$ has $N$ zero between $1$ and $r_0$ if and only if
$$ -\frac{\pi}{2}-N\pi<\theta(r_0)<\frac{\pi}{2}-N\pi.$$
It is totally similar to count the number of zero thanks to the function $\psi$ defined by
$$\tan(\psi)= \frac{rW'}{W}.$$
We estimate $\psi'$
\begin{align*}\psi'(r)=&\frac{1}{1+\left(\frac{rW'}{W}\right)^2}\frac{W(W'+rW'')-r(W')^2}{W^2}\\
=&\frac{WW'-W\frac{1}{r-1}(W'+W(1-W^2)) -(rW')^2}{W^2+(rW')^2}\\
=&\frac{WW'\frac{r-2}{r-1}-\frac{1}{r-1}W^2(1-W^2) -(rW')^2}{W^2+(rW')^2}.
\end{align*}
We first estimate
$$\frac{WW'\frac{r-2}{r-1}}{W^2+(rW')^2}\leq \frac{1}{r}\frac{|rWW'|}{W^2+(rW')^2}\leq \frac{1}{2r}.$$
We assume that $|W|\leq \delta$.
To estimate the other terms we consider three cases
\begin{itemize}
\item $2|W|^2\leq |rW'|^2$. Then we have
$$\psi'(r) \leq \frac{1}{2r} -\frac{r(W')^2}{W^2+ (rW')^2}
\leq \frac{1}{2r}-\frac{2}{3r}= -\frac{1}{6r}.$$
\item $2|rW'|^2\leq |W|^2$. Then we have
$$\psi'(r) \leq \frac{1}{2r} -\frac{W^2(1-W^2)}{(r-1)(W^2+(rW')^2)}
\leq \frac{1}{2r}-\frac{2(1-\delta^2)}{3(r-1)}
\leq \frac{3-4(1-\delta^2)}{6r}.$$
\item $\frac{1}{2}|W|^2\leq|rW'|^2\leq 2|W|^2$. Then we have
$$\psi'(r) \leq \frac{1}{2r} -\frac{r(W')^2}{W^2+ (rW')^2}
 -\frac{W^2(1-W^2)}{(r-1)(W^2+(rW')^2)}
 \leq \frac{1}{2r}-\frac{1}{3r}
-\frac{(1-\delta^2)}{3r}\leq \frac{3-4+2\delta^2}{6r}.$$
\end{itemize}
If we take $\delta$ small enough we then have
$$\psi'(r)\leq -\frac{1}{12r}.$$
Let now $R$ be such that $-\frac{1}{12}\ln(R)\leq -N\pi$.
Thanks to Corollary \ref{cont}, we can find $a_0$ small enough such that for $0\leq a \leq a_0$ the solution exists on $[1,R]
$ and satisfies $|W|\leq \delta$ on this interval. Then $\psi(R)-\psi(1)\leq -N\pi$  so $W$ has at least $N$ zero on $[1,R]$. This concludes the proof of Proposition \ref{zero}.
\end{proof}
\begin{corollary}\label{limbis}
Let $W$ be a solution with $r_a= \infty$ and a finite number of zeros. Then $W \rightarrow \pm 1$.
\end{corollary}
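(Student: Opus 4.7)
The proof goes as follows. Since $W$ has only finitely many zeros and $r_a = \infty$, there exists $r_* \geq 1$ past which $W$ has constant sign; using the symmetry $W \mapsto -W$ of \eqref{eqA.1} I may assume $W > 0$ on $[r_*,\infty)$. The goal is to show $W \to 1$, via two steps: (i) $W$ is eventually monotonic, hence admits a limit $L \in \{0,1\}$; (ii) $L = 0$ is impossible.

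For step (i), Lemma \ref{max} forbids local minima in the region $W > 0$. At any critical point $r_0 \in [r_*,\infty)$ the ODE gives
$$W''(r_0) = -\frac{W(r_0)\bigl(1-W(r_0)^2\bigr)}{r_0(r_0-1)} < 0,$$
so every critical point is a strict local maximum. Between two such maxima one would unavoidably find an interior minimum of $W$, contradicting Lemma \ref{max}; thus $W$ has at most one critical point on $[r_*,\infty)$ and is eventually monotonic. Combining with Lemma \ref{lim}, the limit $L \in \{-1,0,1\}$, and $W > 0$ forces $L \in \{0,1\}$.

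For step (ii), suppose $L = 0$. Then eventually $W > 0$ and $W' \leq 0$, so on some $[r_1,\infty)$ with $r_1 \geq \max(r_*,2)$ the formula from Proposition \ref{ham},
$$H'(r) = (W')^2 \frac{r(2-r)}{1-r} + WW'(1-W^2)\frac{1}{1-r},$$
gives $H' \geq 0$: the coefficient $r(2-r)/(1-r)$ is positive for $r > 2$, and in the second term $WW'(1-W^2) \leq 0$ is paired with $1/(1-r) < 0$. Hence $H = r^2(W')^2/2 + W^2/2 - W^4/4 \geq 0$ is nondecreasing and admits a limit $H_\infty \in [0,\infty]$. Since $W \to 0$, the contribution $W^2/2 - W^4/4$ vanishes at infinity, so $r^2(W')^2/2 \to H_\infty$.

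I finish by a dichotomy on $H_\infty$. If $H_\infty > 0$, then for some $c > 0$ we have $|W'(r)| \geq c/r$ for large $r$; integration then forces $W$ to vary by a logarithmic amount, contradicting $W \to 0$. If $H_\infty = 0$, then $H \equiv 0$ on $[r_1,\infty)$ (being nonnegative, nondecreasing, with limit $0$), so $W' \equiv 0$ and $W \equiv 0$ there; by uniqueness for the ODE this propagates to $W \equiv 0$ on $[1,\infty)$, contradicting the finite-zero hypothesis. Hence $L = 1$; the case of $W$ eventually negative yields $L = -1$ by the $W \mapsto -W$ symmetry. The main obstacle is the $L = 0$ case: the maximum principle alone cannot exclude it, and one genuinely needs the Hamiltonian $H$ to transfer the asymptotic condition $W \to 0$ into a rigidity statement forcing $W$ to vanish identically.
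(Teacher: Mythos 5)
Your proof is correct, but the key step differs from the paper's. Both proofs share the first part: Lemma \ref{max} plus the ODE at critical points gives eventual monotonicity, hence a limit, and Lemma \ref{lim} restricts that limit to $\{-1,0,1\}$. Where you diverge is in excluding the limit $0$. The paper does this with the Pr\"ufer-type phase function $\psi$, $\tan\psi = rW'/W$, from the proof of Proposition \ref{zero}: if $W\rightarrow 0$ then eventually $|W|\leq\delta$, so $\psi'(r)\leq -\frac{1}{12r}$, hence $\psi\rightarrow-\infty$ and $W$ has infinitely many zeros, directly contradicting the hypothesis. You instead run a monotonicity-plus-rigidity argument on the Hamiltonian $H$ of Proposition \ref{ham}: your sign analysis of $H'$ is right (for $r>2$, $W>0$, $W'\leq 0$ both terms are nonnegative), the dichotomy on $H_\infty$ is airtight ($H_\infty>0$ forces logarithmic decrease of $W$, contradicting $W>0$; $H_\infty=0$ forces $H\equiv 0$, hence $W\equiv 0$ on a half-line, contradicting positivity — no need even for the backward-uniqueness step you add). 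The trade-off: the paper's route recycles machinery already built for Proposition \ref{zero}, needs no monotonicity at all, and proves something stronger (any solution trapped in $|W|\leq\delta$ on a half-line must oscillate infinitely often); your route avoids the $\psi$-function and its three-case estimate entirely, at the cost of needing the eventual sign and monotonicity of $W$ — which the finite-zero hypothesis supplies anyway. One small caveat: your closing claim that ``one genuinely needs the Hamiltonian $H$'' to exclude $L=0$ is overstated, since the paper's oscillation argument does exactly that without $H$.
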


\begin{proof}
Because of Lemma \ref{max} a solution with a finite number of zeros has a finite limit. Because of Lemma \ref{lim} this limit must be $0$ or $\pm 1$.  If it was $0$ we could find $R_0$ such that $|W|\leq \delta$ for $r\geq R_0$, with $\delta$ defined in the proof of Proposition \ref{zero}. then for $r\geq R_0$ we have
$$\psi'(r)\leq -\frac{1}{12r},$$
consequently $\psi$ is unbounded from above, so $W$ has an infinite number of zeros.
\end{proof}

\subsection{Proof of the Theorem}

\begin{lemma}
Let $X_n$ be the set of initial data $a$ such that the corresponding solution has $n$ zeros and satisfies $r_a<\infty$. Then $X_n$ is open and if $\alpha$ is a limit point of $X_n$ the corresponding solution satisfies $r_\alpha= \infty$ has $m$ zeros, with $m=n$ or $m=n-1$ and tends to $(-1)^m$ at infinity.
\end{lemma}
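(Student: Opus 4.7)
The proof splits into three steps: openness of $X_n$, the identification $r_\alpha=\infty$ for boundary limit points, and the determination of the zero count $m\in\{n-1,n\}$ together with the limit $(-1)^m$. Note that the conclusion $r_\alpha=\infty$ is incompatible with $\alpha\in X_n$, so the relevant limit points are boundary points $\alpha\in\overline{X_n}\setminus X_n$. The common underlying fact is \emph{transversality}: since $W\equiv 0$ and $W\equiv\pm 1$ all solve \eqref{eqA.1}, uniqueness of the Cauchy problem for the smooth ODE on $r>1$ forces $W_a'(r_*)\ne 0$ whenever $W_a(r_*)=0$ or $|W_a(r_*)|=1$, for any $a\in(0,1)$. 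In particular, the $n$ zeros of $W_a$ and the endpoint $r_a$ are transverse crossings.

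\emph{Openness and $r_\alpha=\infty$.} For $a\in X_n$, Corollary \ref{cont} makes $W_{a'}$ arbitrarily $C^{2,\alpha}([1,r_a])$-close to $W_a$ when $a'$ is close to $a$. Transversality of the $n$ zeros and of the $\pm 1$-crossing at $r_a$ is preserved by small $C^{2,\alpha}$ perturbations, so $W_{a'}$ has exactly $n$ transverse zeros in $[1,r_a]$ and first reaches $\pm 1$ at some $r_{a'}$ close to $r_a$; hence $a'\in X_n$. If $\alpha$ is a limit point of $X_n$ with $r_\alpha<\infty$, the same openness argument applied at $\alpha$ places a whole neighborhood of $\alpha$ inside $X_k$, where $k$ is the number of zeros of $W_\alpha$; since this neighborhood meets $X_n$, we have $k=n$, so $\alpha\in X_n$, which is incompatible with $r_\alpha<\infty$. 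Hence $r_\alpha=\infty$ and $|W_\alpha|<1$ on $[1,\infty)$.

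\emph{Upper bound and limit.} Suppose $W_\alpha$ has an $(n{+}1)$-st zero at some $r_*<\infty$. By compactness and $|W_\alpha|<1$, there exists $\eta>0$ with $|W_\alpha|\le 1-\eta$ on $[1,r_*+1]$; Corollary \ref{cont} propagates this to $|W_a|\le 1-\eta/2$ on the same interval for $a$ near $\alpha$, forcing $r_a>r_*+1$ and, by transversality plus $C^0$-closeness, at least $n+1$ transverse zeros of $W_a$ in $[1,r_a]$, contradicting $a\in X_n$. So $W_\alpha$ has $m\le n$ zeros, and Corollary \ref{limbis} gives $W_\alpha\to\pm 1$. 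Because transverse zeros alternate the sign of $W_\alpha$ starting from $W_\alpha(1)=\alpha>0$, the limit must be $(-1)^m$.

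\emph{The main obstacle: ruling out $m\le n-2$.} Past its last zero $r_m$, $W_\alpha$ has constant sign $(-1)^m$. Lemma \ref{max} forbids local extrema of the wrong sign, and a local extremum of the correct sign is also impossible: after such an extremum $W_\alpha$ would have to turn back toward $(-1)^m$, creating a wrong-sign extremum before recovering to $(-1)^m$ at infinity. Hence $W_\alpha$ is strictly monotone on $(r_m,\infty)$ with $W_\alpha'$ of sign $(-1)^m$. Fix $\epsilon,R_0$ as in Proposition \ref{ham} and pick $R>R_0$ with $|W_\alpha(R)|>1-\epsilon/2$. For $a$ close to $\alpha$, Corollary \ref{cont} yields $W_a$ with exactly $m$ transverse zeros on $[1,R]$, $|W_a(R)|>1-\epsilon$, and $W_a'(R)$ of sign $(-1)^m$. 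If $W_a$ remains monotone past $R$ it has no further zeros, hence only $m$ in total, contradicting $a\in X_n$. Otherwise $W_a$ has a first extremum at some $r_0>R$, and monotonicity on $[R,r_0]$ gives $|W_a(r_0)|\ge|W_a(R)|>1-\epsilon$; Proposition \ref{ham} then produces $r_a<\infty$ with exactly one more zero of $W_a$ in $[r_0,r_a]$, for a total of $m+1$ zeros. This contradicts $a\in X_n$ whenever $m+1<n$, i.e.\ $m\le n-2$, so $m\ge n-1$.
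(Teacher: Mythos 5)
Your proposal is correct and takes essentially the same route as the paper's proof: openness and the exclusion of $r_\alpha<\infty$ via Corollary \ref{cont}, the bound $m\le n$ again via Corollary \ref{cont}, and the bound $m\ge n-1$ together with the limit $(-1)^m$ via Corollary \ref{limbis} and Proposition \ref{ham}, with the transversality and monotonicity facts that the paper leaves implicit spelled out. One wording slip: in the step excluding $r_\alpha<\infty$, the conclusion $\alpha\in X_n$ contradicts your standing assumption $\alpha\in\overline{X_n}\setminus X_n$, not ``$r_\alpha<\infty$'' as written.
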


\begin{proof}
The fact that $X_n$ is open is a direct consequence of Corollary \ref{cont}. Let $\alpha$ be a limit point and let $a_i \in X_n$ be such that $a_i \rightarrow \alpha$. 

Assume first that $W_\alpha$ is such that $r_a<\infty$. Then we can compare the solution on the fixed interval $[1,r_\alpha]$ so Corollary \ref{cont} implies that $W_\alpha$ has exactly $n$ zeros, so $\alpha \in X_n$ which is a contradiction. Consequently $r_a= \infty$. This also implies that the sequence of $r_{a_i}$ is not bounded.

Assume now that there exists $R$ such that $W_\alpha$ has strictly more than $n$ zeros before $R$. Once again Corollary \ref{cont} yields a contradiction. 

Assume now that $W_{\alpha}$ has $m$ zeros with $m<n$. Then thanks to Corollary \ref{limbis} $W_{\alpha}$ tend to $(-1)^m$ and Proposition \ref{ham} implies that the $W_{a_i}$ have $m$ or $m+1$ zeros.
\end{proof}

\begin{proof}[Proof of Theorem \ref{thstat}]
Let $\wht X_n$ be the set of initial data $a$ such that the corresponding solution has less than $n$ zeros.
Let $\alpha = \min(\wht X_n)$. Proposition \ref{zero} implies that $\alpha>0$.
There are two case
\begin{itemize}
\item If $\alpha \in \wht X_n$, then $W_{\alpha}$ is a solution with $m\leq n$ zeros with $r_{\alpha}= \infty$. Then Corollary \ref{cont} and Proposition \ref{ham} imply that for $a$ close to $\alpha$ either the solutions have $m$ zeros, either have $m+1$ zeros and $r_a<\infty$. Consequently we have $m=n$ and considering a sequence $a_i<\alpha$ converging to $\alpha$ we have shown that $X_{n+1}$ is non empty.

\item If $\alpha \notin \wht X_n$ then $W_{\alpha}$ must be a solution with $r_{\alpha}= \infty$ and $k>n$ zeros. But we have shown in the previous point that in a neighborhood of such a solution we can only have solutions with $k$ or $k+1$ zeros, so this case can not occur. 

\end{itemize}

We start the iteration with the function
$$W_1 = \frac{c-r}{r+3(c-1)}, \quad c=\frac{3+\sqrt{3}}{2},$$
which is a special solution of \eqref{ym}, with only one $0$ (see
\cite{BCC}). Note also that 
$$W_1(1)= \frac{1+\sqrt{3}}{5+3\sqrt{3}}=a_1.$$ We then obtain at least one solution $-1\leq W_{a_n} \leq 1$ for each number of zeros $n$. This concludes the proof of Theorem \ref{thstat}.
\end{proof}

\end{document}